\newtheorem{theorem}{Theorem}[section]
\newtheorem{definition}{Definition}[section]
\newtheorem{proposition}[theorem]{Proposition}
\newtheorem{lemma}[theorem]{Lemma}
\newtheorem{corollary}[theorem]{Corollary}
\newtheorem*{claim}{Claim}
\newtheorem{theoremalph}{Theorem}
 \def\JJ{{\mathbb J}} 
 \def\NN{{\mathbb N}}
 \def\ZZ{{\mathbb Z}}
\def \G{\Gamma}
\def \D{\Delta}
\def \cC{\mathcal{C}}
\def \cS{\mathcal{S}}
\def \F{\EuScript{F}}
\def \B{\mathcal{B}}
\def \H{\mathcal{H}}
\def \K{\mathcal{K}}
\def \cM{\mathscr{M}}
\def\F {{\mathcal F}}
\def\B {{\mathcal B}}
\def\P{\mathcal P}
\def\G{{\mathcal G}}
\begin{document}

\title[Physical measures on partially hyperbolic diffeomorphisms]{Physical measures on partially hyperbolic diffeomorphisms with multi 1-D centers}

\author[Zeya Mi]{Zeya Mi}

\address{School of Mathematics and Statistics,
Nanjing University of Information Science and Technology, Nanjing 210044, Jiangsu, China}
\email{\href{mailto:mizeya@163.com}{mizeya@163.com}}

\author[Yongluo Cao]{Yongluo Cao}

\address{Department of Mathematics, Soochow University, Suzhou 215006, Jiangsu, China.}
\address{Center for Dynamical Systems and Differential Equation, Soochow University, Suzhou 215006, Jiangsu, China}

\email{\href{mailto:ylcao@suda.edu.cn}{ylcao@suda.edu.cn}}

\thanks{Y. Cao is the corresponding author. Z. Mi\ was partially supported by National Key R\&D Program of China(2022YFA1007800) and NSFC 12271260. Y. Cao\ was partially supported by National Key R\&D Program of China(2022YFA1005802) and NSFC 12371194.}

\date{\today}

\keywords{Physical measures, SRB measures, Partially hyperbolic, Lyapunov exponent}
\subjclass[2010]{37C40, 37D25, 37D30}

\begin{abstract}
In this paper, we study physical measures for partially hyperbolic diffeomorphisms with multi one-dimensional centers under the condition that all Gibbs $u$-states are hyperbolic. We prove the finiteness of ergodic physical measures. Then by building a criterion for the basin covering property of physical measures, 
we obtain the basin covering property for ergodic physical measures when there exists some limit measure of empirical measures for Lebesgue almost every point that admits the same sign of Lyapunov exponents on each center.
\end{abstract}

\maketitle


\section{Introduction}
A central topic in dynamical systems is to describe the typical behavior of orbits as time goes to infinity, with various viewpoints.  
%
One way to formulate this behavior is through the Sinai-Ruelle-Bowen(SRB) theory, which also links highly to the important objects in dynamical systems, such as entropies, Lyapunov exponents, thermodynamical formalism.

Let $f$ be a $C^2$ diffeomorphism on a compact Riemannian manifold $M$.
By \emph{physical measure} one means an $f$-invariant measure $\mu$ whose basin
$$
\B(\mu,f)=\Big\{x\in M: \lim_{n\to +\infty}\frac{1}{n}\sum_{i=0}^{n-1}\varphi(f^i(x))=\int \varphi {\rm d}\mu,\quad \forall\, \varphi\in C(M)\Big\}
$$
exhibits positive Lebesgue measure. In 1970's, Sinai, Ruelle and Bowen \cite{Sin72, Rue76, Bow75, BoR75} showed that for uniformly hyperbolic systems, there are finitely many physical measures, whose basins cover a full Lebesgue measure subset, called \emph{basin covering property}. 

In general, we say an invariant measure $\mu$ is an \emph{SRB} measure if it admits positive Lyapunov exponents almost everywhere and has absolutely continuous conditional measures along Pesin unstable leaves with respect to the corresponding Lebesgue measure. According to the absolute continuity of Pesin unstable lamination, one knows that every ergodic hyperbolic SRB measure is a physical measure.


One would like to know whether, for systems away from uniform hyperbolicity, there exist finitely many physical measures with basin covering property. This has led to many significant progresses in the study of dynamical systems. 
However, it is still far from being completely known. We refer the
readers to, e.g., \cite{BDV05,pa,v2,y02} and the references therein for more background and related conjectures on this topic.
We are concerned in this paper with the physical measures for partially hyperbolic diffeomorphisms. 

A motivation of this work is a conjecture of Palis \cite[Conjecture 2]{pa}: there is a residual or a dense set $\mathcal{R}$ of $C^r$ diffeomorphisms such that for any $f\in \mathcal{R}$ away from homoclinic tangencies, $f$ has finitely many physical measures. 
Recently, a partial answer was given by Cao-Mi-Yang \cite[Theorem A]{CMY22}: there is a dense set $\mathcal{R}$ in the set of $C^1$ diffeomorphisms such that any $f\in \mathcal{R}$ away form homoclinic tangencies, $f$ has either a physical measure supported on a sink, or an ergodic SRB measure. Due to the abundance of partially hyperbolic systems with multi one-dimensional centers for diffeomorphisms away from homoclinic tangencies \cite{CSY15}, this result was mainly concluded from the following \cite[Theorem C]{CMY22}: 
\begin{theorem}
For a $C^2$ diffeomorphism admitting the partially hyperbolic splitting with multi one-dimensonal centers, there always exists some ergodic SRB measure. 
\end{theorem}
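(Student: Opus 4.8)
\medskip

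\noindent\textbf{Proof idea.} The plan is to start from a Gibbs $u$-state and to upgrade it to an SRB measure by absorbing, one bundle at a time, those center bundles along which it has a positive Lyapunov exponent into its unstable direction. Write the splitting as $TM=E^s\oplus E^c_1\oplus\cdots\oplus E^c_k\oplus E^u$, ordered so that $E^u$ is the most expanded and each $E^c_i$ dominates $E^c_{i+1}$ (here $E^u\neq\{0\}$, which is part of the partial hyperbolicity assumption), and for $0\le j\le k$ set $E^{(j)}:=E^u\oplus E^c_1\oplus\cdots\oplus E^c_j$, a continuous $Df$-invariant subbundle that dominates its complement. By the Hirsch--Pugh--Shub plaque family theorem there is a locally $f$-invariant continuous family $\mathcal W^{(j)}=\{W^{(j)}(x)\}_{x\in M}$ of $C^1$ embedded disks tangent to $E^{(j)}$, with $\mathcal W^{(0)}$ the strong unstable foliation and each $\mathcal W^{(j-1)}$ subfoliating $\mathcal W^{(j)}$ (after a compatible choice). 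Call an $f$-invariant probability a \emph{level-$j$ Gibbs state} if its conditional measures along the plaques of $\mathcal W^{(j)}$ are absolutely continuous with respect to the induced Lebesgue measures; level-$0$ Gibbs states are exactly the Gibbs $u$-states, and the ergodic components of a level-$j$ Gibbs state are again level-$j$ Gibbs states (the argument for Gibbs $u$-states in the book of Bonatti, D\'{\i}az and Viana applies verbatim).

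Since $E^u\neq\{0\}$ and $f$ is $C^2$, Gibbs $u$-states exist: pushing forward the normalized Lebesgue measure of an unstable disk and taking time averages, the distortion of $f^n$ along $E^u$ is uniformly bounded, so every weak-$*$ accumulation point is a level-$0$ Gibbs state. Hence $J:=\{\,0\le j\le k:\ \text{there is an ergodic level-}j\text{ Gibbs state}\,\}$ is nonempty; let $j^*=\max J$ and fix an ergodic level-$j^*$ Gibbs state $\nu$. By domination the center Lyapunov exponents of the ergodic measure $\nu$ are strictly decreasing, $\lambda^c_1(\nu)>\cdots>\lambda^c_k(\nu)$; let $m$ be the number of positive ones, so that the Pesin unstable manifolds of $\nu$ are tangent to $E^{(m)}$ and are therefore open subsets of $\mathcal W^{(m)}$-plaques. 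The crux is that $m\le j^*$. Granting this, the Pesin unstable leaves of $\nu$ lie inside $\mathcal W^{(j^*)}$-plaques, along which $\nu$ is absolutely continuous; a sub-lamination of an absolutely continuous lamination inherits absolute continuity of conditionals (Fubini), so $\nu$ has absolutely continuous conditional measures along its Pesin unstable leaves, and $\dim E^u\ge 1$ gives a positive exponent almost everywhere; thus $\nu$ is an ergodic SRB measure.

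To prove $m\le j^*$, suppose instead $\lambda^c_{j^*+1}(\nu)>0$ and let us manufacture an ergodic level-$(j^*+1)$ Gibbs state, contradicting the maximality of $j^*$. With $\phi:=\log\|Df|_{E^c_{j^*+1}}\|$ (continuous, as $E^c_{j^*+1}$ is one-dimensional), ergodicity gives $\frac1n\sum_{i<n}\phi\circ f^i\to\lambda^c_{j^*+1}(\nu)>0$ $\nu$-almost everywhere, so by the Pliss lemma $\nu$-almost every point has a positive lower density of times $n$ at which $f^n$ expands $E^c_{j^*+1}$ — hence, by domination, the whole of $E^{(j^*+1)}$ — uniformly over all intermediate times. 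Because $\nu$ is a level-$j^*$ Gibbs state this good behaviour holds at Lebesgue-almost every point of a disk inside a $\mathcal W^{(j^*)}$-plaque; thickening that disk in the $E^c_{j^*+1}$-direction to a full-dimensional disk $\Delta$ tangent to $E^{(j^*+1)}$ along which a positive-Lebesgue-measure set of points retains a positive density of such ``hyperbolic times'', one pushes the restriction of $\mathrm{Leb}_\Delta$ to that set forward by $f^n$: at the hyperbolic times the $C^2$-distortion of $f^n$ along $\Delta$ is uniformly bounded (the hyperbolic-time condition yields uniform backward contraction of the tangent spaces of the iterated disks, which together with their controlled geometry gives the standard bounded-distortion estimate, as in the mostly-expanding constructions of Alves, Bonatti and Viana), so the time averages of the pushforwards accumulate on an invariant measure with a part of positive mass that is absolutely continuous along $\mathcal W^{(j^*+1)}$-plaques; an ergodic component of that part is an ergodic level-$(j^*+1)$ Gibbs state, the desired contradiction.

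The step I expect to be the main obstacle is precisely this last construction, and within it the ``thickening'': converting the statement ``the good-hyperbolic-time set has full $\nu$-measure'' — which \emph{a priori} is only informative along $\mathcal W^{(j^*)}$-plaques — into ``it meets some full-dimensional $\mathcal W^{(j^*+1)}$-disk in positive Lebesgue measure'', since the transverse measure of $\nu$ in the $E^c_{j^*+1}$-direction can be singular. Making this work should require a more careful choice of $\nu$ (for instance extremal among ergodic level-$j^*$ Gibbs states for $\int\phi\,\mathrm d\nu$) together with the absolute continuity of the holonomies between nearby plaques, and a correspondingly careful bounded-distortion estimate in the presence of a genuine strong stable bundle and of the intermediate center bundles $E^c_1,\dots,E^c_{j^*}$. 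The remaining ingredients — the plaque-family hierarchy, the Pliss lemma, the ergodic decomposition of level-$j$ Gibbs states, and the passage of absolute continuity to sub-laminations — are routine adaptations of the Gibbs $u$-state formalism.
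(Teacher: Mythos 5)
The paper does not prove this statement; it quotes it verbatim as Theorem C of Cao--Mi--Yang \cite{CMY22}, and what it does import from \cite{CMY22} (the Gibbs $E_i$-state framework recalled in Propositions \ref{cue}, \ref{ugu} and Lemma \ref{sim}, together with the partial entropy inequality of Proposition \ref{cug}) points to a different technical route from yours: detect the SRB property via the entropy formula and the Ledrappier--Young characterization, not by pushing reference disks forward through hyperbolic times. That alternative route reappears in the proof of Theorem \ref{eb} here, where the inequality $h_\mu(f)\ge\int\log|\det Df|_{E_i}|\,\mathrm d\mu$ for Lebesgue-typical limit measures $\mu$, combined with Ruelle's inequality, forces equality at the appropriate index and yields an SRB measure directly from Ledrappier--Young.

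Your ladder-climbing outline is in the right spirit and many of your steps are routine, but the gap you yourself flag is genuine and, with the tools you propose, I do not see how to close it. Absolute continuity of $\nu$ along $\mathcal W^{(j^*)}$-plaques controls Lebesgue measure only \emph{inside} those plaques; the transverse conditional measures of $\nu$ in the $E^c_{j^*+1}$-direction may be atomic, and an extremal choice of $\nu$ cannot rule that out. Absolute continuity of holonomies cannot rescue this either: the complementary bundle $E^c_{j^*+1}\oplus\cdots\oplus E^s$ is not uniformly contracting, so the transverse plaque family has no absolutely continuous holonomy in general, and the Pesin stable lamination of $\nu$ (tangent only to $E^c_{j^*+2}\oplus\cdots\oplus E^s$ when $\lambda^c_{j^*+1}(\nu)>0$) does not move in the $E^c_{j^*+1}$-direction at all. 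Without some a priori transverse regularity you have no mechanism to produce a full-dimensional disk $\Delta$ tangent to $E^{(j^*+1)}$ on which the good hyperbolic-time set has positive $\mathrm{Leb}_\Delta$-measure, so the push-forward construction never gets started. The entropy-formula route sidesteps this by detecting absolute continuity abstractly rather than constructively; if you want to keep the constructive approach, the missing ingredient is precisely a transverse version of the information the entropy inequality supplies.
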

Going back to the original conjecture \cite[Conjecture 2]{pa}, one expects to improve this result to physical measures. However, there exists examples of partially hyperbolic diffeomorphisms with one-dimensional centers without physical measures, considering the product of an Anosov diffeomorphism by the identity maps on circles. On the other hand, since any hyperbolic ergodic SRB measure is physical, to get physical measures, a natural way to proceed is to involve some hyperbolicity assumptions on the centers, see for instance \cite{ABV00,bv,AD,AV15,MCY17,HYY19}. 

In this paper, we study the physical measures for partially hyperbolic splitting systems with multi one-dimensional centers under some hyperbolicity conditions: 
\begin{itemize} 
\item we prove the finiteness of ergodic physical(SRB) measures under the condition that Gibbs $u$-states are hyperbolic; 
\item with extra assumption on the sign of central Lyapunov exponents for empirical measures, we obtain the basin covering property for these physical measures. 
\end{itemize}
The latter result is based on a criterion established for diffeomorphisms with the dominated splitting, which focusses on getting basin covering property from the finiteness of physical measures.

\subsection{Statement of main results}\label{pdd}
\medskip
Let $f$ be a $C^1$ diffeomorphism on a compact Riemannian manifold $M$. 
A $Df$-invariant splitting $TM=E\oplus F$ of the tangent bundle is called a \emph{dominated splitting} if $E$ dominates $F$, denoting by $E\oplus_{\succ} F$, in the sense that there exist $C>0$ and $0<\lambda<1$ such that 
$$
\|Df^n|_{F(x)}\|\cdot\|Df^{-n}|_{E(f^n(x))}\|\le C\lambda^n\quad \textrm{for every}~x\in M.
$$

A $C^1$ diffeomorphism $f$ is \emph{partially hyperbolic} if there exists a dominated splitting 
$TM=E^u\oplus_{\succ} E_1\oplus_{\succ}\cdots \oplus_{\succ} E_k\oplus_{\succ} E^s$ 
on the tangent bundle such that $E^u$ is uniformly expanding and $E^s$ is uniformly contracting.
By \emph{Gibbs $u$-state}, we mean an invariant measure whose conditional measures along strong unstable manifolds are absolutely continuous w.r.t. Lebesgue measures on these manifolds. We say an invariant measure is \emph{hyperbolic} if it admits only non-zero Lyapunov exponents.
%

Given $k\ge 1$, denote by $\P_k(M)$ the collection of $C^2$ diffeomorphisms in $M$ having a partially hyperbolic splitting 
$
TM=E^u\oplus_{\succ} E_1^c\oplus_{\succ}\cdots\oplus_{\succ} E_k^c \oplus_{\succ} E^s
$
such that 
\begin{itemize}
\item ${\rm dim}E_i=1, 1\le i \le k$;
\smallskip
\item every Gibbs $u$-state of $f$ is hyperbolic.
\end{itemize}

The first main result is as follows: 
%
%
%
%

\begin{theoremalph}\label{TheoA}
For every $f\in\P_k(M)$, there are finitely many ergodic physical measures of $f$.
\end{theoremalph}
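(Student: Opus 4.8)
The plan is to reduce Theorem~\ref{TheoA} to a finiteness statement for ergodic Gibbs $u$-states and then prove the latter.

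\textbf{Reduction.} First I would show that every ergodic physical measure of $f$ is in fact an ergodic Gibbs $u$-state. Since $E^u$ is uniformly expanding, it integrates to the strong unstable foliation $\mathcal{F}^{uu}$, which is absolutely continuous. Given an ergodic physical measure $\mu$, the set $\B(\mu,f)$ has positive Lebesgue measure, so by Fubini along $\mathcal{F}^{uu}$ there is a strong unstable plaque $D$ with $A:=\B(\mu,f)\cap D$ of positive induced volume $\mathrm{Leb}_D$. As every point of $A$ is $\mu$-generic, dominated convergence gives $\tfrac1n\sum_{i=0}^{n-1}f^i_*(\mathrm{Leb}_D|_A)\to \mathrm{Leb}_D(A)\,\mu$ in the weak-$*$ topology; choosing a subsequence along which $\tfrac1n\sum_{i=0}^{n-1}f^i_*(\mathrm{Leb}_D|_{D\setminus A})$ also converges and normalizing, one obtains a Gibbs $u$-state $\rho$ with $\mu\ll\rho$. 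Since $\mu$ is ergodic it occurs with positive weight in the ergodic decomposition of $\rho$, and since ergodic components of Gibbs $u$-states are again Gibbs $u$-states, $\mu$ is a Gibbs $u$-state. So it suffices to bound the number of ergodic Gibbs $u$-states of $f$.

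\textbf{Finiteness of ergodic Gibbs $u$-states.} Every ergodic Gibbs $u$-state $\mu$ is hyperbolic by the defining property of $\P_k(M)$, so the center Lyapunov exponents $\lambda_i(\mu)=\int\log\|Df|_{E_i^c}\|\,d\mu$ are all nonzero; moreover, because $\dim E_i^c=1$ and $E_i^c\oplus_{\succ}E_{i+1}^c$, one has the strict ordering $\lambda_1(\mu)>\cdots>\lambda_k(\mu)$, so the negative exponents form a suffix $\{k-\tau+1,\dots,k\}$ and the \emph{type} $\tau(\mu)\in\{0,\dots,k\}$ is well defined. I would then bound the number of ergodic Gibbs $u$-states of each fixed type. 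For type $\tau=k$ (all center exponents negative) the Pesin unstable manifolds reduce to the strong unstable leaves, so such a measure is SRB, and finiteness follows from the classical mechanism for partially hyperbolic systems with mostly contracting center (Bonatti--Viana, Dolgopyat), applied on the relevant attracting regions. For $\tau<k$ the leading exponents $\lambda_1(\mu),\dots,\lambda_{k-\tau}(\mu)$ are positive, and I would peel these off one at a time using the dominated splitting $(E^u\oplus E_1^c\oplus\cdots\oplus E_{k-\tau}^c)\oplus_{\succ}(E_{k-\tau+1}^c\oplus\cdots\oplus E^s)$, controlling the conditional structure along the center-unstable disks tangent to the first block by a Gibbs $cu$-state argument while treating the contracting block as in the type-$k$ case. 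Summing the finite bounds over $\tau$ gives the theorem.

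\textbf{Main obstacle.} The hyperbolicity assumed here is non-uniform, so the sizes of Pesin (un)stable manifolds and the density estimates along strong unstable leaves are not uniform over $M$; the heart of the proof is to rule out a sequence of pairwise distinct ergodic Gibbs $u$-states accumulating on a (hyperbolic but possibly non-ergodic) Gibbs $u$-state, carried out uniformly over Pesin blocks. The intermediate types $0<\tau<k$ are the most delicate: expanding and contracting center directions coexist, so one must combine the $cu$-state construction for the former with the mostly-contracting argument for the latter; and one must allow for ergodic Gibbs $u$-states that are not SRB (e.g. $\mathrm{Leb}\times\delta_p$ with $p$ a repelling center fixed point in a product model), which the count has to absorb even though they produce no physical measure.
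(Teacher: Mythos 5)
Your reduction step contains a genuine gap that undermines the rest of the argument. You correctly show that every ergodic physical measure is an ergodic Gibbs $u$-state, but you then assert that ``it suffices to bound the number of ergodic Gibbs $u$-states of $f$.'' That is a strictly stronger claim than the theorem, and nothing in the hypothesis $f\in\P_k(M)$ forces it to be true: as you yourself note, there can be ergodic Gibbs $u$-states of the form $m\times\delta_p$ (product model, $p$ a repelling center point) that are not physical, and your approach must then count all of these, while the paper only needs to count the physical ones. The paper instead proves (Theorem~\ref{eb}) that every ergodic physical measure is an \emph{SRB} measure and in fact a Gibbs $E_i$-state with $\lambda^c_{i+1}(\mu)<0$; this upgrade uses the entropy inequality along the dominated splitting (Proposition~\ref{cug}) together with Ruelle's inequality and Ledrappier--Young, and it is precisely what your proposal skips. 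Without it, the ``type'' stratification you set up parametrizes a potentially infinite family of non-SRB Gibbs $u$-states that no mostly-contracting argument will rule out.

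The finiteness mechanism is also different in substance, not just in presentation. Your plan for type $\tau=k$ invokes the Bonatti--Viana mostly-contracting machinery, and for $0<\tau<k$ you propose to ``peel off'' positive exponents and re-run the same argument on the contracting block, treating the first block via a Gibbs $cu$-state construction. But the mostly-contracting argument requires uniform negativity of center exponents over \emph{all} Gibbs $u$-states of the relevant sub-bundle, which fails here by design (the whole point of $\P_k(M)$ is to allow both signs to coexist). The paper avoids this by working with the sets $\G_i$ of ergodic Gibbs $E_i$-states having $\lambda^c_{i+1}<0$, and proving (Theorem~\ref{34}, built on Theorems~\ref{sec} and~\ref{C}) that any weak-$*$ limit of measures in $G_i^{erg}$ has $\lambda^c_{i+1}>0$ almost everywhere; finiteness of $\G_i$ then follows from the continuity of $\mu\mapsto\lambda^c_{i+1}(\mu)$ and the sign contradiction. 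Nothing in your proposal supplies this key limit-measure statement, and the case $\tau=0$ (no contracting center at all) is not addressed by either the mostly-contracting argument or the ``peel off'' sketch. In short: the type decomposition is the right first step, but the reduction must be to SRB measures of fixed index rather than to all Gibbs $u$-states, and you need the limit-measure dichotomy of Theorem~\ref{34} rather than a per-type mostly-contracting argument.
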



As we have mentioned, \cite[Theorem C]{CMY22} have proved the existence of (ergodic) SRB measures for partially hyperbolic diffeomorphisms with one-dimensional dominated center bundles.  Observe that every SRB measure is a Gibbs $u$-state, the hyperbolicity assumption on Gibbs $u$-states guarantees the existence of ergodic physical measures. Hence, the main novelty of Theorem \ref{TheoA} is to show the finiteness of ergodic physical measures. 



Observe that when the basin covering property holds, the limit measures of the empirical measures 
$\{\frac{1}{n}\sum_{i=0}^{n-1} \delta_{f^i(x)}\}_{n\in \NN}$
for Lebesgue almost every point $x\in M$ coincide with the set of ergodic physical measures.
This implies that for Lebesgue almost every $x\in M$, the sign of Lyapunov exponents of the limit measures of empirical measures of $x$ along each center is definite. Motivated by this, we achieved the basin covering property by adding assumption on the sign of Lyapunov exponents w.r.t. empirical measures.

\begin{theoremalph}\label{TheoB}
Let $f\in\P_k(M)$.
Assume that for Lebesgue almost every point $x\in M$, there exists some limit measure of $\{\frac{1}{n}\sum_{i=0}^{n-1}\delta_{f^i(x)}\}_{n\in \NN}$ admitting the same sign(positive or negative) of Lyapunov exponents along each $E_i^c$, $2\le i \le k$. Then Lebesgue almost every point is contained in some basin of the ergodic physical measure.
\end{theoremalph}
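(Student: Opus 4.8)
The plan is to deduce the basin covering property from the finiteness of physical measures (Theorem~\ref{TheoA}) by means of a criterion adapted to the dominated splitting $E^u\oplus_{\succ} E_1^c\oplus_{\succ}\cdots\oplus_{\succ} E_k^c\oplus_{\succ} E^s$. By Theorem~\ref{TheoA} there are finitely many ergodic physical measures $\mu_1,\dots,\mu_\ell$; each one is a Gibbs $u$-state (physical measures of $C^2$ partially hyperbolic diffeomorphisms always are, as one sees by disintegrating Lebesgue along the strong unstable foliation), hence hyperbolic, and in fact an SRB measure whose Pesin unstable bundle equals $E^u\oplus E_1^c\oplus\cdots\oplus E_j^c$ with $j$ the number of its positive central exponents. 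Assume, for a contradiction, that $A:=M\setminus\bigcup_{j=1}^{\ell}\B(\mu_j,f)$ has positive Lebesgue measure; our task is to reach a contradiction.

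For Lebesgue-a.e.\ $x$, hence for a.e.\ $x\in A$, fix a limit measure $\nu_x$ of $\{\frac1n\sum_{i=0}^{n-1}\delta_{f^i(x)}\}_{n\in\NN}$ with the same sign of central exponents along $E_2^c,\dots,E_k^c$, together with a subsequence realizing it. Decompose $\nu_x$ into ergodic components. By Oseledets' theorem together with the domination $E_i^c\oplus_{\succ} E_{i+1}^c$, which forces the central exponents of any ergodic measure to be strictly decreasing, the central indices with positive exponent form an initial segment $\{1,\dots,j\}$; so the sign hypothesis on $E_2^c,\dots,E_k^c$ leaves only $j\in\{0,1,k\}$ for each ergodic component of $\nu_x$, and its Pesin unstable bundle is $\nu_x$-a.e.\ equal to $E^u$, to $E^u\oplus E_1^c$, or to the full center-unstable bundle $E^u\oplus E_1^c\oplus\cdots\oplus E_k^c$. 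The bundle $E_1^c$, on which the hypothesis is silent, requires no separate treatment: if $E_2^c,\dots,E_k^c$ all expand then so does $E_1^c$ by the ordering, while otherwise the positive prefix is $\emptyset$ or $\{1\}$; and a vanishing exponent along $E_1^c$ is ruled out once the relevant measures are identified as Gibbs $u$-states, by the very definition of $\P_k(M)$.

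The core of the proof, and the content of the announced criterion, is to convert this per-orbit hyperbolicity information into a statement about the Gibbs $u$-states on which Lebesgue-typical strong unstable plaques accumulate. Disintegrating Lebesgue along the strong unstable foliation (and, in the ``all central exponents positive'' regime, working instead with center-unstable disks, in the spirit of the mostly-expanding-center theory), on a positive-measure set of plaques $D$ one has $\frac1n\sum_{i<n}f^i_*\mathrm{Leb}_D=\int_D\bigl(\frac1n\sum_{i<n}\delta_{f^i(y)}\bigr)\,d\mathrm{Leb}_D(y)$, which exhibits any weak-$*$ limit $\hat\mu$ of the left-hand side as an average of limit measures of individual orbits; one then aims to show $\hat\mu$ is a convex combination of $\mu_1,\dots,\mu_\ell$. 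Indeed $\hat\mu$ is a Gibbs $u$-state, hence hyperbolic by the definition of $\P_k(M)$; the sign information of the previous paragraph confines the Pesin unstable bundle of its ergodic components to $E^u$, $E^u\oplus E_1^c$, or the full center-unstable bundle; components with Pesin unstable $E^u$ are automatically SRB, and for the others one argues that a Gibbs $u$-state that is not absolutely continuous along an expanding central direction repels Lebesgue along that direction and therefore cannot acquire positive mass from the iterates of a Lebesgue-typical plaque, so that $\hat\mu$ is SRB and thus supported on physical measures, i.e.\ on $\{\mu_1,\dots,\mu_\ell\}$. A density-point argument then places $\mathrm{Leb}_D$-a.e.\ point of $D$ in some $\B(\mu_j,f)$, and Fubini over the plaques contradicts $\mathrm{Leb}(A)>0$. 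The main obstacle, I expect, is exactly this conversion: transferring the sign hypothesis---stated along individual orbits and only along $E_2^c,\dots,E_k^c$---to the plaque level when the ``positive'' and ``negative'' regimes and the uncontrolled $E_1^c$-direction may coexist along a single plaque, and, hand in hand with it, excluding statistical escape, for a positive-measure set of plaques, to the hyperbolic but non-physical Gibbs $u$-states.
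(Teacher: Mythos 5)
Your setup is sound as far as it goes: the finiteness from Theorem~\ref{TheoA}, the observation that the sign hypothesis on $E_2^c,\dots,E_k^c$ together with domination forces each ergodic component of a limit measure to have its positive central prefix equal to $\emptyset$, $\{1\}$, or $\{1,\dots,k\}$, and the remark that a vanishing exponent on $E_1^c$ is excluded because Lebesgue-typical limit measures are Gibbs $u$-states and hence hyperbolic. This mirrors the paper's stratification (the sets $K_i$ and the use of Proposition~\ref{ded} for the $E_1^c$-direction). However, your proof stops exactly where the real work begins, and you say so yourself: the ``conversion'' from per-orbit sign information to basin membership is not carried out, it is only announced.

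The specific mechanism you sketch --- average empirical measures over a strong unstable (or center-unstable) plaque, get a Gibbs $u$-state $\hat\mu$, and argue that Gibbs $u$-states which are not absolutely continuous along an expanding central direction ``repel Lebesgue'' so that $\hat\mu$ is an SRB measure supported on $\{\mu_1,\dots,\mu_\ell\}$ --- is precisely the Bonatti--Viana / mostly-expanding style of argument. The paper states explicitly that this line of reasoning cannot be adapted here, because when $0<j<k$ the relevant Pesin unstable bundle $E^u\oplus E_1^c\oplus\cdots\oplus E_j^c$ and stable bundle $E_{j+1}^c\oplus\cdots\oplus E^s$ are both merely nonuniform, so there is neither a uniformly expanding foliation to push Lebesgue along nor a uniformly contracting one to transfer basins. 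Your ``repel Lebesgue'' step is not justified and your ``density-point argument'' hides the missing absolute-continuity input. Moreover, even if $\hat\mu$ were shown to be a combination of the $\mu_j$, it does not follow that a.e.\ point of the plaque lies in some $\B(\mu_j,f)$: that transfer requires the absolute continuity of the Pesin stable lamination inside a Pesin block, not just weak-$*$ convergence of plaque averages.

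What the paper does instead, and what is missing from your proposal, has two ingredients. First, for a.e.\ $x$ in each stratum $K_i$ it identifies a limit measure $\mu\in\mathscr{M}(x)$ that is a hyperbolic SRB measure of index $\dim E_i$, using the entropy estimate for limit measures of Lebesgue-typical points (Proposition~\ref{cug}) combined with Ruelle's inequality and the sign information (\ref{tt}); you never touch the entropy formula, so you have no handle on the SRB property of the approximating measures. Second, it invokes Theorem~\ref{TheoC}, whose proof is the actual ``conversion'': it selects, via the Pliss lemma, $\sigma$-hyperbolic times of positive frequency along which the orbit lands in a small neighborhood $U$ of a chosen Pesin block (Proposition~\ref{pro2} and Corollary~\ref{pro1}); uses the backward-contracting property at hyperbolic times (Proposition~\ref{gtg}) to blow a small sub-disk of a Lebesgue-typical disk $D$ up to uniform size inside the Pesin block; and then applies the absolute continuity of the Pesin stable lamination to conclude that a definite proportion of $D$ falls into some $\B(\nu_i,f)$, contradicting the assumption that a positive-measure set avoids all basins. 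None of these tools --- Pliss times, hyperbolic entrance times, Pesin blocks, or stable holonomies --- appears in your sketch, and it is precisely these that replace the uniformly expanding foliation your heuristic relies upon. As written, the proof has a genuine gap at its central step.
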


To prove Theorem \ref{TheoB}, we will make use of the following general result. We say $\mu$ is a hyperbolic measure of index $d\in \NN$, if it is hyperbolic and admits exactly $d$ positive Lyapunov exponents for $\mu$-a.e. $x\in M$, counted with multiplicity.
%

\begin{theoremalph}\label{TheoC}
Let $f$ be a $C^2$ diffeomorphism with a dominated splitting $TM=E\oplus_{\succ} F$, which admits finitely many hyperbolic ergodic SRB measures of index ${\rm dim}E$. 

If $\K$ is a subset of $M$ with positive Lebesgue measure such that for every $x\in \K$, there exists some limit measure of $\{\frac{1}{n}\sum_{i=0}^{n-1}\delta_{f^i(x)}\}_{n\in \NN}$ to be a hyperbolic SRB measure of index ${\rm dim}E$, then Lebesgue almost every point in $\K$ belongs in the basin of some ergodic hyperbolic SRB measure of index ${\rm dim}E$.
\end{theoremalph}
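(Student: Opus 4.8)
The plan is to reduce the statement to a covering argument using the absolute continuity of the Pesin unstable lamination and a uniformity estimate coming from the dominated splitting. Let $\mu_1,\dots,\mu_r$ be the finitely many hyperbolic ergodic SRB measures of index $d:=\dim E$, and write $\B=\bigcup_{j}\B(\mu_j,f)$ for the union of their basins. The goal is to show $\mathrm{Leb}(\K\setminus\B)=0$. Suppose not; then $\K':=\K\setminus\B$ has positive Lebesgue measure, and by hypothesis every $x\in\K'$ has an empirical limit $\nu_x$ which is a hyperbolic SRB measure of index $d$. The first step is to analyze the ergodic decomposition of such a $\nu_x$: since $\nu_x$ is SRB of index $d$ and $f$ has a dominated splitting $E\oplus_\succ F$ with $\dim E=d$, almost every ergodic component of $\nu_x$ is again an SRB measure with $d$ positive exponents along a bundle dominated by the negative directions; one then argues — this is where the finiteness hypothesis is used essentially — that each such ergodic component is hyperbolic (no zero exponents), hence one of the $\mu_j$. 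I would want a lemma, presumably available from the earlier development or from standard Pesin theory under domination, stating that an SRB measure whose hyperbolic part has index $d$ decomposes into ergodic hyperbolic SRB measures of index $d$; the domination is what rules out central zero exponents leaking in.

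Next I would upgrade the convergence of empirical measures of $x\in\K'$ to actual entry into a basin. The mechanism is the classical one: if $\nu_x=\sum_j a_j\mu_j$ (with some $a_j>0$), pick $j$ with $a_j>0$; then the forward orbit of $x$ spends a definite fraction of time near $\mathrm{supp}(\mu_j)$, and in particular visits a Pesin block $\Lambda_j$ of $\mu_j$ of nearly full $\mu_j$-measure along a positive-density subsequence. At such visiting times the local unstable manifold through $f^n(x)$ of uniform size exists (sizes of unstable manifolds are bounded below on a Pesin block), and $x$ itself lies — after pulling back — on a piece of unstable manifold accumulating on $\Lambda_j$. The domination $E\oplus_\succ F$ is again used here: it forces the relevant invariant manifolds tangent to the $F$-side to have uniformly controlled geometry (they are genuinely unstable for the whole map, not merely Pesin-unstable), so one can apply the absolute continuity of the strong-unstable (here, $F$-subordinated Pesin unstable) holonomy. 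Concretely, the set of points on such unstable plaques that are \emph{not} in $\B(\mu_j,f)$ has zero leaf-Lebesgue measure by the SRB property and Birkhoff's theorem applied on $\mu_j$; absolute continuity then propagates this to a zero-Lebesgue-measure statement transversally.

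The final step is the Fubini/density argument assembling these pieces. Cover $\K'$ (up to a null set) by countably many "nice" charts in which, for each $j$, one has a measurable lamination by unstable plaques with absolutely continuous holonomy; by the previous step, on $\mathrm{Leb}$-almost every plaque that meets the relevant Pesin block the plaque-a.e. point lies in $\B(\mu_j,f)$. Since $x\in\K'$ was assumed \emph{not} to lie in any basin, $x$ must lie in the exceptional null set on its plaque for every $j$ with $a_j(x)>0$; integrating transversally via absolute continuity gives $\mathrm{Leb}(\K')=0$, a contradiction. A clean way to organize this is: (i) show $\mathrm{Leb}$-a.e.\ $x\in\K'$ has $\omega(x)$ meeting $\mathrm{supp}(\mu_j)$ for some $j$, with positive upper density of returns to a fixed Pesin block; (ii) show the set $G_j$ of points whose forward orbit equidistributes to $\mu_j$ is, restricted to any unstable plaque subordinate to the $\mu_j$-lamination, of full leaf measure; (iii) conclude by absolute continuity that $\mathrm{Leb}(\bigcup_j G_j)\ge \mathrm{Leb}(\K)-\mathrm{Leb}(\K\setminus\B)$ forces $\K\setminus\B$ null.

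I expect the main obstacle to be step (i)–(ii): turning "some empirical limit is an SRB measure of index $d$" into "the orbit actually enters a basin." The subtlety is that an empirical \emph{limit} along a subsequence does not by itself force convergence of the full Birkhoff averages, so one cannot directly declare $x\in\B(\mu_j,f)$; the resolution must route through the geometry — realizing $x$ (or a point with the same forward asymptotics) on an unstable plaque subordinate to $\mu_j$ and invoking absolute continuity, which converts a sub-sequential measure-theoretic statement into an a.e.\ statement for the transverse Lebesgue measure. Making precise which invariant manifolds exist (global unstable for the dominated bundle $F$ versus merely Pesin-unstable), and checking that the holonomies in play are absolutely continuous with bounded Jacobians on the relevant Pesin blocks, is the technical heart of the argument; the domination hypothesis $E\oplus_\succ F$ and the fixed index $d=\dim E$ are exactly what make this work uniformly.
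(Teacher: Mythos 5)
Your high-level picture (decompose the limit measure into ergodic SRB components $\mu_j$, locate a Pesin block, and use absolute continuity to push the orbit into a basin) matches the paper's, but the middle step has a gap that would defeat the argument as written. You invoke ``the local unstable manifold through $f^n(x)$ of uniform size'' and, later, a ``measurable lamination by unstable plaques'' over a positive-Lebesgue region of $\K'$. But $x\in\K'$ is merely a Lebesgue-typical point: it need not be Pesin-regular, and even when its empirical measures accumulate on $\mu_j$ the iterates $f^n(x)$ land only in an open \emph{neighborhood} of a Pesin block, not in it. Pesin unstable manifolds for $\mu_j$ exist on a set of full $\mu_j$-measure, which may be Lebesgue-null, so neither an unstable plaque through $f^n(x)$ nor a lamination covering $\K'$ is available, and ``pulling back'' from $f^n(x)$ does not rescue this --- backward iteration shrinks an $E$-tangent disk exponentially, yielding nothing of uniform size at $x$. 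There is also a bundle mix-up: you write that manifolds tangent to $F$ are ``genuinely unstable for the whole map'', but for an index-$\dim E$ SRB measure the exponents along $F$ are negative, so these are Pesin \emph{stable} manifolds, and domination alone makes $F$ neither uniformly expanded nor uniformly contracted; the absolute continuity used is that of the Pesin stable lamination (tangent to $F$) against an $E$-transversal, not an unstable holonomy.

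The paper's substitute for all of this is the hyperbolic-times machinery, which is precisely what your sketch is missing. One starts from a \emph{smooth} disk $D\ni x$ transverse to $F$ (produced from a smooth chart foliation, with no Pesin theory needed at $x$) and shows that along a positive-density subsequence of $\sigma$-hyperbolic entrance times $n\in\mathcal{H}(x,\sigma,U)$ the image $f^n(D)$ is tangent to the $E$-cone and contains a ball of uniform radius around $f^n(x)$, with $f^n(x)$ landing in a carefully built neighborhood $U$ of the Pesin blocks (Proposition~\ref{gtg}). The frequency of such times comes from a Pliss-type estimate on $\log\|Df^{-1}|_E\|$ (Proposition~\ref{pro2}); this requires $\int\log\|Df^{-1}|_E\|\,d\nu<0$ for every $\nu\in\cS_{erg}(f)$, which does \emph{not} follow from positivity of the $E$-exponents alone, so the paper first passes to a power $f^N$ via Proposition~\ref{edd} and Lemma~\ref{lemx} and proves the weaker Theorem~\ref{ext}. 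Only once $f^n(D)$ is a uniform-size transversal near the Pesin block does absolute continuity of the stable lamination of $\mu_j$ give a definite lower bound on ${\rm Leb}_{f^n(D)}(\B(\mu_j,f))$, producing the contradiction with ${\rm Leb}_D(H)>0$. In short, the step you correctly flag at the end as the ``technical heart'' is exactly where a non-Pesin mechanism (hyperbolic times for smooth disks, plus the preliminary passage to a power) must be supplied; Pesin theory applied directly to the point $x$ cannot do it.
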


Theorem \ref{TheoC} focusses on getting the basin covering property from the finiteness of ergodic physical(SRB) measures with fixed index. We mention that for diffeomorphisms in Theorem \ref{TheoC}, there may exist other ergodic SRB measures with different indexes. Thus, Theorem \ref{TheoC} provides an alternative criterion for the basin covering property in a local sense.



In the context of partially hyperbolic settings, several other results on the finiteness and basin covering property of physical measures have been obtained, especially under the hyperbolicity assumption of Gibbs $u$-states. To the best of our knowledge, we list some of them:
 
\begin{itemize}
\item The finiteness and basin covering property were also obtained by Bonatti-Viana \cite[Theorem A]{bv} for diffeomorphisms with mostly contracting center, i.e., all the Gibbs $u$-states admit only negative central Lyapunov exponents. 
\smallskip 
\item For diffeomorphisms with mostly expanding center, i.e., all the Gibbs $u$-states have only positive central Lyapunov exponents, by showing weak non-uniform expansion from mostly expanding property, and using the result of \cite{AD}, Andersson-Vasquez \cite[Theorem C]{AV15} obtained the same result on physical measures.
\smallskip
\item Inspired by the work of \cite{bv, AV15}, Mi-Cao-Yang considered in the work \cite[Theorem A]{MCY17} the case that admitting both mostly contracting center and mostly contracting center, and proved the same result on physical measures.  
\smallskip
\item Recently, a related work of Hua-Yang-Yang \cite[Corollary E]{HYY19} have shown the finiteness and basin covering property of (ergodic) physical measures for diffeomorphisms in $\P_1(M)$.
\end{itemize}

In contrast to \cite{bv,AV15,MCY17}, where the signs of central Lyapunov
exponents w.r.t. Gibbs $u$-states are definite, our hyperbolicity assumption on Gibbs $u$-states in
Theorems \ref{TheoA} and \ref{TheoB} is more flexible, allowing the coexistence of different Gibbs $u$-states with different signs of Lyapunov exponents on the same central sub-bundle. 

Note that in Theorem \ref{TheoB}, we have no assumption on the Lyapunov exponents along $E_1^c$ of empirical measures, Theorem \ref{TheoA} together with Theorem \ref{TheoB} is an extension of \cite[Corollary E]{HYY19}.
Comparing to \cite{HYY19}, influenced by the existence of several non-uniform centers, more delicate analyses on various Gibbs $cu$-states(see $\S$ \ref{ucu}) need to be involved. We wonder that if one can remove the extra assumption on the sign of Lyapunov exponents for the empirical measures to get the basin covering property.


To achieve Theorem \ref{TheoA}, we decompose the ergodic physical/SRB measures into different levels by their signs of Lyapunov exponents on centers, and then show the finiteness of these measures on each level by studying the limits of the set of ergodic Gibbs $cu$-states. 
The proof of Theorem \ref{TheoB} relies on the results of Theorems \ref{TheoA} and \ref{TheoC}. 
The main ingredient in getting basin covering property is the use of the absolute continuity of Pesin stable lamination. Due to the absence of uniformly expanding and uniformly contracting sub-bundles, the argument presented in previous works \cite{ABV00,bv,HYY19} cannot be adapted to our situation. In the proof of Theorem \ref{TheoC}, to apply the absolute continuity of Pesin stable lamination under certain uniform hyperbolicity, we renew the techniques of hyperbolic times from \cite{AP08, MC21} corresponding to points from $\K$ and their empirical measures.
%

\subsection{Organization of the paper}
The paper is organized as follows. Section \ref{pre1} contains some notions and results used in this paper. In Section \ref{S-closed}, for diffeomorphisms in $\P_k(M)$, we investigate the behavior of central Lyapunov exponents of the limit measures of the set of ergodic Gibbs $cu$-state with same index. Section \ref{A-proof} is devoted to the proof of Theorem \ref{TheoA}. 
In Section \ref{5-covering}, we reduce the proof of Theorem \ref{TheoC} to proving its weak version Theorem \ref{ext}. For a diffeomorphism $f$ satisfying assumption of Theorem \ref{TheoC}, we show some power of $f$ satisfies the assumption of Theorem \ref{ext}, with this observation we conclude the result of Theorem \ref{TheoC}. Then, by a discussion on the frequency of hyperbolic entrance times, we complete the proof of Theorem \ref{ext}.
In Section \ref{pb}, we give a proof of Theorem \ref{TheoB} by applying Theorems \ref{TheoA} and \ref{TheoC}.

\section{Preliminary}\label{pre1}
\subsection{Notations}
Throughout, we let $M$ be the compact Riemannian manifold. Denote by ${\rm Leb}$ the Lebesgue measure of $M$ given by the Riemannian metric. Given a smooth sub-manifold $D$, let ${\rm Leb}_D$ stand for the induced Lebesgue measure in $D$.
Let $\mathscr{M}$ be the space of Borel probability measures on $M$ endowed with the weak$^*$-topology. 
Given a diffeomorphism $f$ on $M$. 
For $x\in M$, denote by $\mathscr{M}(x,f)$ the set of limit measures of empirical measures
$$
\{\frac{1}{n}\sum_{i=0}^{n-1}\delta_{f^i(x)}\}_{n\in \NN}.
$$
Sometimes, we rewrite $\mathscr{M}(x,f)$ as $\mathscr{M}(x)$ when there is no ambiguity.
\subsection{Pesin blocks}\label{pdd}
%
%


Let us recall a type of Pesin blocks, whose definition is independent of invariant measures, see \cite{LLST17} for instance.
\begin{definition}\label{dpb}
Let $f$ be a $C^1$ diffeomorphism on $M$. Given $\chi_s<0<\chi_u$ and $0<\epsilon \ll \min\{\chi_u,-\chi_s\}$. The Pesin block $\Lambda_k=\Lambda_k(\chi_u,\chi_s,\epsilon)$ of level $k\in \NN$ consists of points $x\in M$ such that there is a $Df$-invariant splitting 
$$
T_xM=E^+(x)\oplus E^-(x)
$$ 
with the following properties:
\begin{itemize}
\smallskip
\item $\|Df^{-n}|_{E^+(f^m(x))}\|\le {\rm e}^{\epsilon k}{\rm e}^{(-\chi_u+\epsilon)n} {\rm e}^{\epsilon |m|}, \quad n\ge 1, m\in \ZZ$;
\smallskip
\item $\|Df^{n}|_{E^-(f^m(x))}\|\le {\rm e}^{\epsilon k}{\rm e}^{(\chi_s+\epsilon)n} {\rm e}^{\epsilon |m|}, \quad n\ge 1, m\in \ZZ$;
\smallskip
\item $|\sin \angle(E^+(f^m(x)), E^-(f^m(x)))|\ge {\rm e}^{-\epsilon k} {\rm e}^{-\epsilon |m|}, \quad m\in \ZZ$.
\end{itemize}
We say that $\Lambda=\Lambda(\chi_u,\chi_s,\epsilon)=\bigcup_{k=1}^{+\infty}\Lambda_k$ is a Pesin set.
\end{definition}


\subsection{Gibbs $u$-states and Gibbs $E$-states}\label{ucu}
Let $f$ be a $C^2$ diffeomorphism with a partially hyperbolic splitting $TM=E^u\oplus_{\succ} E^{cs}$. 
Recall that an $f$-invariant measure is a Gibbs $u$-state if its disintegration along strong unstable manifolds is absolutely continuous w.r.t. Lebesgue measures on these manifolds. Gibbs $u$-states were introduced and established by Sinai-Pesin \cite{ps82} for partially hyperbolic systems. 

We list following properties of Gibbs $u$-states, see \cite[Subsection 11.2]{BDV05} for details.

\begin{proposition}\label{uc}
Let $f$ be a $C^2$ diffeomorphism with a partially hyperbolic splitting $TM=E^u\oplus_{\succ} F$, one has following properties:
\begin{enumerate}
\item\label{11} The ergodic components of any Gibbs $u$-state are Gibbs $u$-states.
\item\label{22} The set of Gibbs $u$-states is compact in weak$^*$-topology.
\item\label{33} For Lebesgue almost every $x\in M$, any limit measure in $\mathscr{M}(x)$ 
is a Gibbs $u$-state.
\end{enumerate}
\end{proposition}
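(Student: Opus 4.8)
The plan is to derive all three items from a single distortion estimate, following \cite[Subsection~11.2]{BDV05}. Since $f$ is $C^2$ and $E^u$ is uniformly expanding, there are constants $K>1$, $\theta\in(0,1]$ and $\rho_0>0$ such that for every disk $D$ inside a strong unstable manifold, every $n\ge 0$, and every two points lying in a common unstable plaque of radius $\le\rho_0$ inside $f^n(D)$, the ratio of the densities of $f^n_\ast{\rm Leb}_D$ at these two points (with respect to the induced Lebesgue measure on that plaque) lies in $[K^{-1},K]$ and depends $(\log K,\theta)$-H\"older-continuously on the plaque distance. From this I would record two consequences. First, a rigidity statement: there is a continuous ``canonical unstable density'' $\Delta(x,\cdot)$ on $W^u_{\rm loc}(x)$, with $\log\Delta(x,\cdot)$ uniformly H\"older, such that any $f$-invariant measure whose conditionals along a measurable partition subordinate to $W^u$ are absolutely continuous has those conditionals equal, almost surely, to the $\Delta$-normalized Lebesgue measure on the atoms; this uses $f$-invariance together with essential uniqueness of Rokhlin disintegrations. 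Second, an averaging statement: for every unstable disk $D$, every weak$^\ast$ accumulation point of $\frac1n\sum_{i=0}^{n-1}f^i_\ast({\rm Leb}_D/{\rm Leb}_D(D))$ is a Gibbs $u$-state, because inside each foliation box a definite proportion of $f^i(D)$ is filled by unstable plaques on which, by the distortion estimate, $f^i_\ast{\rm Leb}_D$ has density comparable and H\"older-close to $\Delta$, so the conditionals of any limit are absolutely continuous; the same holds with ${\rm Leb}_D$ replaced by any probability measure on $D$ of uniformly bounded density.

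For item (2): the set $\mathcal{G}^u$ of Gibbs $u$-states sits inside the compact metrizable space $\mathscr{M}$, is clearly convex and $f$-invariant, and is nonempty by the averaging statement, so it is enough to prove it is closed. Fixing a finite atlas of foliation boxes for $W^u$, if $\mu_j\to\mu$ with each $\mu_j\in\mathcal{G}^u$, then by the rigidity statement the conditional densities of every $\mu_j$ inside a given box all coincide with the single family $\Delta(x,\cdot)/\!\int\Delta$, which is uniformly bounded and uniformly H\"older; passing to the weak$^\ast$ limit, $\mu$ disintegrates in each box with these same densities, hence $\mu\in\mathcal{G}^u$.

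For item (3) I would first prove its one-disk version: for a fixed unstable disk $D$ and ${\rm Leb}_D$-almost every $y\in D$, every measure in $\mathscr{M}(y)$ is a Gibbs $u$-state. The full statement then follows by covering $M$ with finitely many $W^u$-foliation boxes and expressing ${\rm Leb}$ restricted to a box as an integral over the unstable plaques of probability measures of bounded density (absolute continuity of the strong unstable foliation). To prove the one-disk version, suppose it fails on a set $A\subseteq D$ with ${\rm Leb}_D(A)>0$; then there is $\varepsilon>0$ such that every $y\in A$ has infinitely many times $n$ for which $\frac1n\sum_{i=0}^{n-1}\delta_{f^i(y)}$ is at distance $>\varepsilon$ from the closed set $\mathcal{G}^u$. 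An Egorov-type/pigeonhole argument then produces a positive-measure subset $A'\subseteq A$ and a bounded window of iterates along which this occurs simultaneously for all points of $A'$; averaging the measures $f^i_\ast({\rm Leb}_D|_{A'})$ over that window yields a convex combination of pushforwards of a bounded-density measure on $D$ that stays at distance $\ge\varepsilon$ from $\mathcal{G}^u$, contradicting the averaging statement. Finally, item (1) follows from the one-disk version of item (3): for $\mu\in\mathcal{G}^u$ with ergodic decomposition $\mu=\int\mu_\xi\,{\rm d}\hat\mu(\xi)$, Birkhoff's theorem gives that for $\mu$-almost every $x$ the empirical measures of $x$ converge to the single ergodic measure $\mu_{\xi(x)}$, while $x$ lies on an unstable plaque along which the conditional of $\mu$ is absolutely continuous; by item (3) this limit is a Gibbs $u$-state, so $\mu_\xi\in\mathcal{G}^u$ for $\hat\mu$-almost every $\xi$.

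The main obstacle is the transfer step inside item (3): the diameters of the disks $f^i(D)$ grow under iteration, so an individual orbit cannot be tracked by pushforwards of shrinking disks, and passing from the averaging statement to an almost-everywhere pointwise statement needs the somewhat delicate combinatorial argument sketched above. The other place where the hypotheses --- $C^2$ regularity and uniform expansion of $E^u$ --- are genuinely used is the distortion estimate, which underlies both the rigidity statement and the averaging statement.
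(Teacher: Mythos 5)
The paper does not supply a proof of Proposition~\ref{uc}: it is stated as background and attributed to \cite[Subsection 11.2]{BDV05}, so there is no in-paper argument to compare against, and I am judging your sketch against the standard Pesin--Sinai/Bonatti--Diaz--Viana development you clearly have in mind. Your overall architecture is the standard one: a $C^2$ distortion estimate for the unstable Jacobian gives (i) the canonical density $\Delta$ and hence rigidity of the conditionals of any Gibbs $u$-state, and (ii) the fact that Ces\`aro averages of pushed-forward unstable disk volume accumulate only on Gibbs $u$-states, uniformly over disks of a fixed inner radius. Items (2) and (1) are then bookkeeping (weak$^\ast$ limits preserve the uniformly H\"older $\Delta$-conditionals for (2); rigidity together with Rokhlin uniqueness of disintegrations, or your route through (3), for (1)).

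The gap is in the one-disk version of item (3), precisely the step you flag as delicate. You produce a positive-measure $A'\subseteq D$ and a bounded window of iterates so that each $y\in A'$ has some time $n$ in the window with $\frac1n\sum_{i<n}\delta_{f^i(y)}$ at distance $>\varepsilon$ from the closed convex set $\mathcal{G}^u$ of Gibbs $u$-states, and you then claim the corresponding Ces\`aro average of $f^i_\ast(\mathrm{Leb}_D|_{A'})$ stays at distance $\ge\varepsilon$. This does not follow: $\frac1n\sum_{i<n}f^i_\ast(\mathrm{Leb}_D|_{A'})/\mathrm{Leb}_D(A')$ equals $\int_{A'}\bigl(\frac1n\sum_{i<n}\delta_{f^i(y)}\bigr)\,\mathrm{d}\mathrm{Leb}_D(y)/\mathrm{Leb}_D(A')$, a convex combination of measures each far from the \emph{convex} set $\mathcal{G}^u$, and such a combination can land inside $\mathcal{G}^u$ (the empirical measures of different $y$'s can sit on opposite sides). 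The natural repair --- cover $\mathscr{M}$ by finitely many $\varepsilon/2$-balls and pigeonhole both the time $n$ and the ball, which does restore convexity --- forces you to shrink $A'$ to a subset $A''$ whose $\mathrm{Leb}_D$-measure is not bounded below as the window is pushed to later and later times $n$; the density of $\mathrm{Leb}_D|_{A''}/\mathrm{Leb}_D(A'')$ on $D$ is therefore unbounded, and your averaging statement (whose conclusion is uniform only over initial measures with a fixed density bound and as $n\to\infty$) no longer applies. In the source you are following, this difficulty is sidestepped by working with a fixed continuous observable $\varphi$ and proving, via bounded distortion along iterates of $D$ and a covering/maximal-function argument, that $\limsup_n\frac1n\sum_{i<n}\varphi\circ f^i\le\sup\{\int\varphi\,\mathrm{d}\nu:\nu\in\mathcal{G}^u\}$ holds $\mathrm{Leb}_D$-a.e., and only afterwards recovering the measure-level statement from Hahn--Banach separation of the closed convex set $\mathcal{G}^u$; your sketch never reduces to a single test function, which is what makes the combinatorics tractable. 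Consequently (3), and with it your derivation of (1) from (3), need to be reworked; note that (1) also admits a direct proof from the rigidity of conditionals and the Lebesgue decomposition of disintegrations that does not pass through (3).
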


Recall the following entropy formula for diffeomorphisms with dominated splittings, see \cite[Theorem F]{CYZ18}, \cite[Theorem A]{cce15} for the proof.

\begin{proposition}\label{cug}
Let $f$ be a $C^1$ diffeomorphism with a dominated splitting $TM=E\oplus_{\succ} F$. Then for Lebesgue almost every $x\in M$, any limit measure $\mu$ in $\mathscr{M}(x)$ satisfies 
$$
{\rm h}_{\mu}(f)\ge \int \log |{\rm det}Df|_{E}| {\rm d} \mu,
$$
where ${\rm h}_{\mu}(f)$ denotes the metric entropy of $\mu$ w.r.t. $f$.
\end{proposition}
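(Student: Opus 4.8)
The plan is to show that for Lebesgue-a.e.\ $x\in M$, every $\mu\in\mathscr M(x)$ is a \emph{Gibbs $E$-state} --- an invariant measure whose conditional measures along a suitable family of $\dim E$-dimensional disks tangent to the $E$-direction are absolutely continuous --- and then to deduce the entropy inequality from the Ledrappier--Young/Pesin entropy formula. Since $h_\mu(f)\ge 0$ always, the inequality is automatic when $\int\log|{\rm det}Df|_E|\,{\rm d}\mu\le0$, so only the case where this integral is positive carries content. By the domination $E\oplus_\succ F$, I would first fix a thin continuous cone field $\mathcal C$ of dimension $\dim E$ around $E$ that is strictly forward-invariant, $Df\,\mathcal C(x)\subset\mathcal C(f(x))$ (after replacing $f$ by a power, which is harmless here), together with a locally finite cover of $M$ by foliation boxes, each foliated by $C^1$ embedded $\dim E$-disks $\{D_t\}$ tangent to $\mathcal C$, so that ${\rm Leb}$ disintegrates locally as $\int{\rm Leb}_{D_t}\,{\rm d}\nu(t)$. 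The images $f^nD_t$ stay tangent to $\mathcal C$, and by shrinking $\mathcal C$ the angle between $Tf^nD_t$ and $E$ on the image is as small as desired; hence the tangential Jacobian $J_n(y)=|{\rm det}(Df^n|_{T_yD_t})|$ obeys $\big|\tfrac1n\log J_n(y)-\tfrac1n\sum_{i=0}^{n-1}\log|{\rm det}Df|_{E}(f^iy)|\big|\le\rho(\mathcal C)$ with $\rho(\mathcal C)\to0$. A point worth stressing: $\log|{\rm det}Df|_E|$ is a \emph{continuous} function (the bundle $E$ and $Df$ are continuous), so merely $C^1$ regularity suffices and the distortion of $f^n$ along an $(n,\epsilon)$-shadowing orbit segment is controlled by uniform continuity, not by H\"older estimates.

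Next, in the spirit of the Pesin--Sinai construction of Gibbs $u$-states, I would take $x$ in a fixed box, $\mu\in\mathscr M(x)$ realized along $n_k\to\infty$, and the plaque $D=D_{t(x)}$ through $x$, and analyse the push-forwards $f^j_*({\rm Leb}_D|_D)$: these are absolutely continuous on $f^jD$ with densities governed by $J_j^{-1}$ and the bounded distortion above. The averages $\tfrac1{n_k}\sum_{j<n_k}f^j_*({\rm Leb}_D|_D)$ are averages of absolutely continuous measures; using that the empirical measures of $x$ converge to $\mu$, that ${\rm Leb}$ is the $\nu$-integral of the ${\rm Leb}_{D_t}$ (Fubini), and the distortion control, one shows these averages accumulate --- along $n_k$ --- on $\mu$, and that this accumulation preserves absolute continuity of the conditionals along plaques tangent to $\mathcal C$. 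Hence $\mu$ is a Gibbs $E$-state.

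Finally, for a Gibbs $E$-state $\mu$ the conditionals along the $\dim E$-plaques are absolutely continuous; since $T_yD$ is within $\rho(\mathcal C)$ of $E(y)$, the Pesin unstable sub-lamination corresponding to the \emph{positive} Lyapunov exponents lying in $E$ is tangentially contained in these plaques, and a within-plaque Fubini argument shows $\mu$ has absolutely continuous conditionals along that sub-lamination. The Ledrappier--Young inequality (equivalently Pesin's formula applied to absolutely continuous unstable conditionals) then gives $h_\mu(f)\ge\sum_{\lambda_i>0,\ \lambda_i\text{ of }E}\lambda_i\ge\sum_{\lambda_i\text{ of }E}\lambda_i=\int\log|{\rm det}Df|_E|\,{\rm d}\mu$, the middle inequality holding because discarding the negative $E$-exponents only increases the sum. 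Letting $\mathcal C$ shrink disposes of the $\rho(\mathcal C)$ error (already absorbed in the previous step), finishing the proof.

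I expect the main obstacle to be the middle step: identifying the weak-$*$ limits of the averaged absolutely continuous measures with the prescribed $\mu\in\mathscr M(x)$ for Lebesgue-a.e.\ $x$, and showing these limits retain absolutely continuous conditionals along a plaque family tangent to $\mathcal C$ that is, in general, not uniquely integrable. This is the analogue, in the non-uniformly-hyperbolic dominated setting, of the compactness and absolute-continuity core of the Pesin--Sinai theory of Gibbs $u$-states, and it is where the geometry of the iterated disks $f^nD$ --- curvature, possible folding, distortion --- must be handled carefully (though, as noted, the $C^1$ category itself causes no additional difficulty, because $\log|{\rm det}Df|_E|$ is continuous). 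The cone-field construction in the first step and the Ledrappier--Young/Pesin formula in the last are by contrast standard.
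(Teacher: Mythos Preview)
The paper does not supply its own proof of this proposition; it simply records it as a known fact and cites \cite[Theorem~F]{CYZ18} and \cite[Theorem~A]{cce15}. Those arguments do \emph{not} proceed by showing that empirical limit measures are Gibbs $E$-states and then invoking Ledrappier--Young; they estimate the entropy more directly, in a way tailored to the $C^1$ dominated setting.

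Your route has a genuine gap, and it lies exactly where you anticipate --- but the difficulty is not merely technical: the intermediate claim is false in general. It is \emph{not} true that for Lebesgue-a.e.\ $x$ every $\mu\in\mathscr M(x)$ has absolutely continuous conditionals along $\dim E$-dimensional disks tangent to $E$. Take $f=A\times{\rm id}$ on $\mathbb T^2\times S^1$ with $A$ linear Anosov, and set $E=E^u\oplus E^c$, $F=E^s$; this is a dominated splitting. For every $x$ the empirical measures converge to ${\rm Leb}_{\mathbb T^2}\times\delta_{x_3}$, which is supported on the $2$-torus $\mathbb T^2\times\{x_3\}$ and therefore has \emph{singular} conditionals on any $2$-dimensional $E$-plaque (such a plaque meets the support only in a one-dimensional unstable curve). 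The entropy inequality nonetheless holds, with equality. The Pesin--Sinai mechanism you invoke relies essentially on uniform expansion along $E$: that is what makes the iterated plaques $f^jD$ grow and what makes the distortion sum $\sum_{i<j}\omega\big(d(f^iy,f^iz)\big)$ converge, via backward contraction on $D$. A merely dominating bundle $E$ gives neither, and uniform continuity of $\log|{\rm det}Df|_E|$ yields a small error \emph{per step}, not a summable one over $j$ steps. Separately, your final appeal to Ledrappier--Young/Pesin is a $C^{1+\alpha}$ statement; its proof uses Pesin unstable manifolds and the absolute continuity of their holonomies, neither of which is available in bare $C^1$. Producing a $C^1$ substitute in the dominated setting is precisely the content of \cite{cce15}, so invoking it here would be circular.
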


Let us recall the notion of \emph{Gibbs $cu$-states}.
Given a diffeomorphism $f$ with a dominated splitting $TM=E\oplus_{\succ} F$. An invariant measure $\mu$ called a Gibbs $cu$-state associated to $E$ if
\begin{itemize}
\item the Lyapunov exponents of $\mu$ along $E$ are positive, 
\item the conditional measures of $\mu$ along Pesin unstable manifolds tangent to $E$ are absolutely continuous w.r.t. Lebesgue measures on these manifolds.
\end{itemize}

Since we need to deal with various Gibbs $cu$-states associated to different sub-bundles, for simplicity we will just call $\mu$ a \emph{Gibbs $E$-state} if it is a Gibbs $cu$-state associated to $E$.
Note that when $E$ is uniformly expanding, then Gibbs $E$-state is indeed the Gibbs $u$-state. 

Similar to property (\ref{11}) of Proposition \ref{uc}, we have the following result. See a proof in \cite[Lemma 2.4]{cv} or \cite[Proposition 4.7]{CMY22}.
\begin{proposition}\label{cue}
Assume that $f$ is a $C^2$ diffeomorphism with a dominated splitting $TM = E\oplus_{\succ} F$. If $\mu$ is a Gibbs $E$-state, then any ergodic component of $\mu$ is also a Gibbs $E$-state.
\end{proposition}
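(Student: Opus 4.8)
The plan is to follow the scheme used for property (\ref{11}) of Proposition \ref{uc} (ergodic components of Gibbs $u$-states are Gibbs $u$-states), the only difference being that the ``$u$-leaves'' here are the Pesin unstable manifolds tangent to $E$ rather than leaves of a continuous foliation. Write the ergodic decomposition $\mu=\int \mu_\xi\,{\rm d}\hat\mu(\xi)$, where $\hat\mu$ is the factor of $\mu$ by the $\sigma$-algebra $\mathcal I$ of $f$-invariant Borel sets, so that $\mu_\xi$ is ergodic for $\hat\mu$-a.e. $\xi$. I must check, for $\hat\mu$-a.e. $\xi$, that (i) the Lyapunov exponents of $\mu_\xi$ along $E$ are positive and (ii) the conditional measures of $\mu_\xi$ along Pesin unstable manifolds tangent to $E$ are absolutely continuous with respect to Lebesgue on those manifolds.

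Point (i) is immediate: the Lyapunov exponents along $E$ are $f$-invariant measurable functions on the full-measure Oseledets set, hence constant along ergodic components, and averaging them against $\hat\mu$ recovers the exponents of $\mu$; since the latter are positive $\mu$-a.e., the former are positive $\mu_\xi$-a.e. for $\hat\mu$-a.e. $\xi$. In particular the local Pesin unstable manifolds $W^{cu}_{\rm loc}(x)$ tangent to $E$ are defined on a full $\mu_\xi$-measure set, and they are produced by the same dynamical prescription (Pesin blocks as in Definition \ref{dpb}) for every such $\xi$, independently of the measure.

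For point (ii), using the classical construction of measurable partitions subordinate to unstable laminations (Ledrappier--Strelcyn; available here since $f$ is $C^2$ with a dominated splitting and positive exponents along $E$), fix a measurable partition $\mathcal{P}$ of a full $\mu$-measure set such that, for $\mu$-a.e. $x$, the atom $\mathcal{P}(x)$ is a relative neighborhood of $x$ in $W^{cu}_{\rm loc}(x)$; by the definition of a Gibbs $E$-state, the conditionals $\mu^{\mathcal{P}}_x$ are absolutely continuous w.r.t. ${\rm Leb}_{W^{cu}_{\rm loc}(x)}$ for $\mu$-a.e.\ $x$. The key observation is that $\mathcal{P}$ refines $\mathcal{I}$ modulo $\mu$: if $y\in W^{cu}_{\rm loc}(x)$ then $d(f^{-n}x,f^{-n}y)\to 0$, so $x$ and $y$ have the same backward Birkhoff averages for every continuous observable, hence lie in the same atom of the ergodic decomposition of $f^{-1}$; since a Borel set is $f$-invariant iff it is $f^{-1}$-invariant, this is the same partition as the one given by $\mathcal{I}$, so every atom of $\mathcal{P}$ sits inside a single atom of $\mathcal{I}$. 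Transitivity of disintegration for the nested pair (partition into ergodic components, refined by $\mathcal{P}$) then yields $(\mu_\xi)^{\mathcal{P}}_x=\mu^{\mathcal{P}}_x$ for $\mu_\xi$-a.e.\ $x$ and $\hat\mu$-a.e.\ $\xi$. Consequently $(\mu_\xi)^{\mathcal{P}}_x$ is absolutely continuous w.r.t.\ ${\rm Leb}_{W^{cu}_{\rm loc}(x)}$, and since $\mathcal{P}$ is subordinate to the $W^{cu}$-lamination $\mu_\xi$-a.e.\ as well, $\mu_\xi$ is a Gibbs $E$-state.

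The only delicate step I anticipate is the bookkeeping in the last paragraph: choosing $\mathcal{P}$ so that it is simultaneously a genuine measurable partition for $\mu$, subordinate to the Pesin unstable lamination on a full-measure set, and coarse enough above the ergodic decomposition for the transitivity of disintegration to apply cleanly — together with invoking the (standard, but not entirely trivial) Pesin-theoretic input that such subordinate partitions exist in this dominated, non-uniformly hyperbolic setting. Once these are in place, the argument is purely formal.
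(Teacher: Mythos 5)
The paper does not write out a proof of Proposition \ref{cue}; it defers to \cite[Lemma 2.4]{cv} and \cite[Proposition 4.7]{CMY22}. Your argument is the standard one underlying those references and is correct: ergodic decomposition, invariance of Lyapunov exponents so that positivity along $E$ passes to a.e.\ ergodic component, a Ledrappier--Strelcyn measurable partition $\mathcal{P}$ subordinate to the Pesin unstable lamination tangent to $E$, the observation that backward Birkhoff averages of continuous observables are constant on Pesin unstable leaves and hence $\sigma(\mathcal{P})\supset\mathcal{I}$ modulo $\mu$-null sets, and transitivity of disintegration to transfer absolute continuity of the conditionals from $\mu$ to $\mu_\xi$. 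One step you should state explicitly rather than leave implicit: that the $\sigma$-algebra of invariant sets is, modulo $\mu$-null sets, generated by the backward Birkhoff-average functionals of a countable dense family in $C(M)$ — this is the standard fact that converts your leafwise constancy statement into the needed refinement $\mathcal{I}\subset\sigma(\mathcal{P})\ (\mathrm{mod}\ \mu)$, and without naming it the jump from ``same backward averages'' to ``same ergodic component'' is a little abrupt.
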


We will use the following fact several times, which can be deduced by the absolute continuity of Pesin stable lamination, see \cite{y02} for a proof.
\begin{proposition}\label{pes}
Assume that $f$ is a $C^2$ diffeomorphism with a dominated splitting $TM = E\oplus_{\succ} F$. If $\mu$ is an ergodic Gibbs $E$-state whose Lyapunov exponents along $F$ are all negative, then $\mu$ is a physical measure. 
\end{proposition}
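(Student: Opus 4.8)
The plan is to show that the hyperbolicity hypotheses turn $\mu$ into a hyperbolic SRB measure, and then to use the absolute continuity of the Pesin stable lamination to spread the basin out from a single unstable leaf onto a set of positive Lebesgue measure in $M$. First I would observe that, as $\mu$ is an ergodic Gibbs $E$-state, all Lyapunov exponents of $\mu$ along $E$ are positive, while by hypothesis all those along $F$ are negative; hence $\mu$ is hyperbolic of index ${\rm dim}E$, the domination $E\oplus_{\succ}F$ forces $E$ and $F$ to be (at $\mu$-a.e.\ point) the unstable and stable Oseledets subbundles, the Pesin unstable manifolds tangent to $E$ are the genuine unstable manifolds $W^u(x)$, and there are Pesin stable manifolds $W^s(x)$ tangent to $F$. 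Applying Birkhoff's ergodic theorem to a countable dense subset of $C(M)$ gives $\mu(\B(\mu,f))=1$.

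Next I would fix a Pesin block $\Lambda_k=\Lambda_k(\chi_u,\chi_s,\epsilon)$ with $\mu(\Lambda_k)>0$, for suitable $\chi_u>0>\chi_s$ and small $\epsilon$, on which the local unstable and stable manifolds $W^u_{\rm loc}(x)$, $W^s_{\rm loc}(x)$ have uniform size and depend continuously on $x\in\Lambda_k$. I would then pick a point $x_0\in\Lambda_k$ for which the conditional measure $\mu^u_{x_0}$ of $\mu$ along $W^u_{\rm loc}(x_0)$ is equivalent to ${\rm Leb}_{W^u_{\rm loc}(x_0)}$ (the defining property of a Gibbs $E$-state), satisfies $\mu^u_{x_0}(\B(\mu,f))=1$ (since $\mu(\B(\mu,f))=1$), and satisfies $\mu^u_{x_0}(\Lambda_k)>0$ (which holds for a positive-$\mu$-measure set of such $x_0$, because $\int\mu^u_{x_0}(\Lambda_k)\,{\rm d}\mu=\mu(\Lambda_k)>0$). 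For such an $x_0$, the set $\Sigma:=W^u_{\rm loc}(x_0)\cap\Lambda_k\cap\B(\mu,f)$ has positive ${\rm Leb}_{W^u_{\rm loc}(x_0)}$-measure.

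Finally I would saturate $\Sigma$ by local stable manifolds. For $y\in\Sigma$ and $z\in W^s_{\rm loc}(y)$ one has $d(f^nz,f^ny)\to 0$, so by uniform continuity the Birkhoff averages of every $\varphi\in C(M)$ along the orbits of $z$ and of $y$ have the same limit, whence $W^s_{\rm loc}(y)\subset\B(\mu,f)$. Thus $\bigcup_{y\in\Sigma}W^s_{\rm loc}(y)\subset\B(\mu,f)$, and by the absolute continuity of the Pesin stable lamination over $\Lambda_k$ (this is the content of \cite{y02} in the present setting) this union has positive Lebesgue measure in $M$. Hence ${\rm Leb}(\B(\mu,f))>0$, i.e.\ $\mu$ is a physical measure.

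The step I expect to be the main obstacle---and, I suspect, the reason this fact is quoted with a reference rather than proved in place---is the absolute continuity of the Pesin stable lamination: since $F$ need not be uniformly contracted, one cannot appeal to uniform stable holonomies and must instead invoke Pesin's absolute continuity theorem, which applies because the $F$-exponents of $\mu$ are uniformly negative along $\Lambda_k$ and $f$ is $C^2$. Once that is granted, passing from ``full conditional measure on $\Sigma$'' to ``positive Lebesgue measure of its stable saturate'' is a routine Fubini-type estimate, and the remainder is bookkeeping.
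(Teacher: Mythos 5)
Your proposal is correct and is essentially the standard argument the paper invokes by citing \cite{y02} (the paper itself gives no proof, only the reference): positivity of the $E$-exponents plus negativity of the $F$-exponents makes $\mu$ hyperbolic, the Gibbs $E$-state property yields a positive-Lebesgue set in some local unstable disk inside the basin, and the absolute continuity of the Pesin stable lamination over a positive-measure Pesin block spreads this to positive Lebesgue measure in $M$. The only cosmetic nit is that the conditional $\mu^u_{x_0}$ need only be absolutely continuous (not equivalent) to ${\rm Leb}_{W^u_{\rm loc}(x_0)}$, which is exactly the direction your argument actually uses.
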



The result below is abstracted from \cite[Theorem 4.10]{CMY22}.
\begin{proposition}\label{ugu}
Assume that $f$ is a $C^2$ diffeomorphism with a dominated splitting $TM = E\oplus_{\succ} F$. If $\mu$ is a limit measure of the set of ergodic Gibbs $E$-states, whose Lyapunov exponents along $E$ are uniformly positive, then $\mu$ is a Gibbs $E$-state.
\end{proposition}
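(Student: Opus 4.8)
The plan is to check that $\mu$ enjoys the two defining properties of a Gibbs $E$-state: that its Lyapunov exponents along $E$ are positive, and that its conditional measures along the Pesin unstable manifolds tangent to $E$ are absolutely continuous. Fix a sequence $\mu_j$ of ergodic Gibbs $E$-states with $\mu_j\to\mu$ in the weak$^*$ topology, and write $m(A)=\|A^{-1}\|^{-1}$ for the conorm. The uniform positivity of the $E$-exponents provides $N\ge 1$ and $\lambda>0$ with $\frac1N\int\log m(Df^N|_{E})\,d\mu_j\ge\lambda$ for every $j$; since $x\mapsto\log m(Df^N|_{E(x)})$ is continuous (the subbundle $E$ is continuous by domination and $f$ is $C^1$) and $\mu_j\to\mu$, the same inequality holds for $\mu$. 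As $n\mapsto\log m(Df^n|_{E})$ is a superadditive cocycle, Fekete's lemma gives that the smallest Lyapunov exponent of $\mu$ along $E$ is at least $\frac1N\int\log m(Df^N|_{E})\,d\mu\ge\lambda$. In particular $\mu$ is uniformly hyperbolic along $E$, and by Oseledets--Pesin theory it gives full measure to the union of Pesin-type blocks adapted to $E$ (in the spirit of Definition \ref{dpb}, but asking only for uniform expansion along $E$, with no constraint on $F$).

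For the second property I would pass to partial entropies along $E$. Let $h^E_\nu(f)$ denote the Ledrappier partial entropy of $\nu$ along the $E$-unstable lamination. Two ingredients are used. First, Ledrappier's characterization in the subbundle form (as for Gibbs $cu$-states, cf.\ \cite{CMY22}): an invariant measure with positive $E$-exponents is a Gibbs $E$-state if and only if $h^E_\nu(f)=\int\log|{\rm det}Df|_{E}|\,d\nu$, the inequality $h^E_\nu(f)\le\int\log|{\rm det}Df|_{E}|\,d\nu$ always holding. Thus $h^E_{\mu_j}(f)=\int\log|{\rm det}Df|_{E}|\,d\mu_j$ for every $j$. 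Second, upper semicontinuity of $\nu\mapsto h^E_\nu(f)$ along $\{\mu_j\}$, which holds precisely because the uniform lower bound $\lambda$ on the $E$-exponents forces the $E$-unstable plaques to have a common inner radius on Pesin blocks of uniform mass. Since $\log|{\rm det}Df|_{E}|$ is continuous, combining these yields
$$
h^E_\mu(f)\ \ge\ \limsup_{j\to\infty}h^E_{\mu_j}(f)\ =\ \limsup_{j\to\infty}\int\log|{\rm det}Df|_{E}|\,d\mu_j\ =\ \int\log|{\rm det}Df|_{E}|\,d\mu,
$$
and together with the reverse inequality this gives equality, so $\mu$ is a Gibbs $E$-state; if convenient one may reduce to an ergodic component first via Proposition \ref{cue}.

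I expect the crux to be the uniform geometric control behind that upper semicontinuity: one must build Pesin-type blocks adapted to $E$ on which the $E$-unstable plaques have a common inner radius $\delta$ and uniformly bounded disintegration distortion, and to which every $\mu_j$ assigns mass bounded away from $0$ — it is exactly the uniform positivity of the $E$-exponents that lets a single such family of blocks serve all $j$ simultaneously. A more hands-on route avoiding partial entropies runs the usual Gibbs-limit argument directly: fix such a block, cover it by finitely many foliation boxes for the $E$-unstable lamination, disintegrate each $\mu_j$ (and $\mu$) along the plaques, note that the Gibbs $E$-property places the conditional densities of the $\mu_j$ on plaques meeting the block in a compact family bounded by the block's distortion constant, and pass to the weak$^*$ limit: the plaques through the compact block converge, $\mu_j\to\mu$, and along a subsequence the conditional densities converge to densities obeying the same bound, identifying the conditionals of $\mu$ as absolutely continuous; exhausting the union of blocks (which has full $\mu$-measure by the first paragraph) finishes. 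Either way, the delicate points — box boundaries, the exceptional null sets, and the extraction of convergent subsequences of conditionals — are routine once the uniform block estimates are secured.
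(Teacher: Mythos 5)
The paper does not prove this proposition; it quotes it as \cite[Theorem 4.10]{CMY22}. The route you sketch in your final paragraph --- uniform Pesin-type blocks, bounded conditional densities along $E$-unstable plaques, compactness of the disintegration and passage to the weak$^*$ limit --- is the same strategy used in that reference (and goes back to Pesin--Sinai and Bonatti--Viana), so your ``hands-on route'' is on target.

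There is, however, a genuine gap in your first paragraph. You assert that uniform positivity of the $E$-exponents of the $\mu_j$ ``provides $N\ge 1$ and $\lambda>0$ with $\frac1N\int\log m(Df^N|_E)\,d\mu_j\ge\lambda$ for every $j$.'' This does not follow. Kingman's theorem applied to the superadditive cocycle $a_n^{(j)}=\int\log m(Df^n|_E)\,d\mu_j$ only gives $\sup_n a_n^{(j)}/n=\lambda_{\min}^E(\mu_j)\ge\lambda_0$; the $n$ at which $a_n^{(j)}/n$ gets close to the sup can grow with $j$ (the finite-time averages $a_n^{(j)}/n$ can be much smaller than the exponent, even negative, for small $n$), so a single $N$ working for all $j$ is not automatic. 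Moreover, in the paper the clause ``whose Lyapunov exponents along $E$ are uniformly positive'' modifies the limit measure $\mu$, not the sequence --- compare the wording in Theorem \ref{C}, where the same clause unambiguously refers to the limit measure. The correct order of the argument is the reverse of yours: first use the hypothesis on $\mu$ together with ergodic decomposition, Kingman, and dominated convergence to get $\lim_N\frac1N\int\log m(Df^N|_E)\,d\mu\ge\lambda_0$, hence a fixed $N$ with $\frac1N\int\log m(Df^N|_E)\,d\mu>\lambda_0/2$; then, since $x\mapsto\log m(Df^N|_{E(x)})$ is continuous for this fixed $N$, the weak$^*$ convergence $\mu_j\to\mu$ gives $\frac1N\int\log m(Df^N|_E)\,d\mu_j>\lambda_0/4$ for all $j$ large. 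This is exactly the uniform cocycle estimate that fuels the Pliss-time argument and the common Pesin blocks your third paragraph depends on, so the gap matters.

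Your middle paragraph --- upper semicontinuity of the partial $E$-entropy $\nu\mapsto h^E_\nu(f)$ along the sequence, combined with the subbundle Ledrappier--Young characterization --- is a genuinely different and plausible route, but the semicontinuity is nontrivial in this non-uniform setting and you do not prove it; you correctly flag it as ``the crux.'' Since the Gibbs-limit compactness argument (your third paragraph) is what the cited reference actually carries out, and it is more self-contained, I would stick with that route, after repairing the logic of the first paragraph as indicated.
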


\section{Limit measures of the set of Gibbs $E$-states}\label{S-closed}

We will concentrate on diffeomorphisms $f$ in $\P_k(M)$ with the partially hyperbolic splitting 
$$
TM=E^u\oplus_{\succ} E_1^c\oplus_{\succ}\cdots\oplus_{\succ} E_k^c \oplus_{\succ} E^s.
$$

For each $0\le i \le k$, we write
$$
E_i=E^u\oplus_{\succ} \cdots \oplus_{\succ} E_i^c,
$$
where $E_0=E^u$ by convention. 
Let $G_{i}$ and $G_i^{erg}$ be the sets of Gibbs $E_i$-states and ergodic Gibbs $E_i$-states of $f$, respectively.
In particular, $G_0$ and $G_0^{erg}$ are the sets of Gibbs $u$-states and ergodic Gibbs $u$-states, respectively.
In this section, our main task is to study the limit measures of $G_i^{erg}$ for each fixed level $1\le i \le k$.

\smallskip
For any $f$-invariant measure $\mu$, by Oseledec's theorem \cite{bp02}, for every $1\le i \le k$, one can define
$$
\lambda^c_i(x)=\lim_{n\to +\infty}\frac{1}{n}\log \|Df^n|_{E^c_i(x)}\|, \quad \mu \textrm{-a.e.}~x.
$$
Moreover, define
$$
\lambda^c_i(\mu)=\int \log\|Df|_{E^c_i}\|{\rm d} \mu.
$$
Given an $f$-invariant measure $\mu$, denote by $\mathcal{C}(\mu)$ the family of ergodic components of $\mu$.

%

\smallskip
We recall the following folklore result \cite[Proposition 4.9]{CMY22}.
\begin{lemma}\label{sim}
Under the setting as above, we have $G_k\subset G_{k-1}\subset \cdots \subset G_0$.
\end{lemma}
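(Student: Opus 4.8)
The plan is to prove the chain of inclusions one step at a time: it suffices to show that for each fixed $1\le i\le k$ we have $G_i\subset G_{i-1}$, since then the full chain $G_k\subset G_{k-1}\subset\cdots\subset G_0$ follows by transitivity. So fix $i$ and let $\mu\in G_i$; by definition $\mu$ is a Gibbs $E_i$-state, meaning the Lyapunov exponents of $\mu$ along $E_i=E^u\oplus_{\succ}E_1^c\oplus_{\succ}\cdots\oplus_{\succ}E_i^c$ are all positive and the conditional measures of $\mu$ along Pesin unstable manifolds tangent to $E_i$ are absolutely continuous with respect to Lebesgue on those manifolds. I must produce from this the corresponding statement for $E_{i-1}=E^u\oplus_{\succ}\cdots\oplus_{\succ}E_{i-1}^c$.

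First I would handle the exponents: since $E_{i-1}\subset E_i$ as a sub-bundle of the dominated splitting, the set of Lyapunov exponents of $\mu$ associated to $E_{i-1}$ is a subset of those associated to $E_i$, hence all positive; in particular $\lambda^c_j(\mu)>0$ for $1\le j\le i-1$ (and the $E^u$ exponents are positive by uniform expansion). Second, and this is the main point, I would upgrade the absolute continuity statement. The Pesin unstable manifolds tangent to $E_{i-1}$ are sub-manifolds of the Pesin unstable manifolds tangent to $E_i$: because $E_{i-1}$ is dominated by $E_i^c$ inside $E_i$, the strong-unstable foliation $\mathcal{W}^{E_{i-1}}$ subfoliates $\mathcal{W}^{E_i}$. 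Given that $\mu$ has a.c. conditionals on the leaves of $\mathcal{W}^{E_i}$, I would disintegrate those Lebesgue-a.c. leafwise measures further along the subfoliation $\mathcal{W}^{E_{i-1}}$; by the Fubini/Rokhlin-type disintegration of Lebesgue measure on a leaf of $\mathcal{W}^{E_i}$ with respect to the subfoliation by $\mathcal{W}^{E_{i-1}}$-leaves (together with absolute continuity of the holonomy of the intermediate foliation), the conditional measures of $\mu$ along $\mathcal{W}^{E_{i-1}}$-leaves are again absolutely continuous with respect to Lebesgue on those leaves. This gives $\mu\in G_{i-1}$.

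The step I expect to be the main obstacle is the second one: carefully justifying that "a.c. conditionals on $\mathcal{W}^{E_i}$-leaves" descends to "a.c. conditionals on $\mathcal{W}^{E_{i-1}}$-leaves." This requires (a) knowing that the intermediate lamination by $\mathcal{W}^{E_{i-1}}$ inside each $\mathcal{W}^{E_i}$-leaf is itself absolutely continuous — which follows from the domination $E_{i-1}\oplus_\succ E_i^c$ restricted to the leaf and the standard absolute-continuity theory for dominated (partially hyperbolic) laminations — and (b) commuting the two disintegrations, i.e. a transitivity-of-conditional-measures argument. Since this is precisely the content cited as folklore in \cite[Proposition 4.9]{CMY22}, I would either invoke that reference directly or reproduce the short disintegration argument; no new idea beyond the two absolute-continuity facts and Rokhlin disintegration is needed. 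The exponent bookkeeping and the reduction to consecutive inclusions are routine.
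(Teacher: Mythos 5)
The paper does not prove Lemma~\ref{sim}; it records it as a ``folklore result'' and refers to \cite[Proposition~4.9]{CMY22}, so there is no in-paper argument to compare yours against. Your reconstruction is the natural one and matches what one expects that reference to do: reduce to a single inclusion $G_i\subset G_{i-1}$, note that the Lyapunov spectrum of $\mu$ along $E_{i-1}$ is a sub-multiset of that along $E_i$ (since $E_{i-1}\oplus E_i^c$ is a $Df$-invariant splitting of $E_i$) and hence consists of positive numbers, and then pass absolute continuity from the Pesin $E_i$-leaves down to the sub-lamination by Pesin $E_{i-1}$-leaves via a Rokhlin disintegration argument using absolute continuity of that sub-lamination.

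Two small remarks. First, the phrase ``$E_{i-1}$ is dominated by $E_i^c$ inside $E_i$'' has the inequality backwards: with the paper's convention $E\oplus_{\succ}F$ means $E$ dominates $F$, so here it is $E_{i-1}$ that dominates $E_i^c$ --- which is precisely what makes $\mathcal{W}^{E_{i-1}}$ the \emph{fast} (strong-unstable) sub-lamination of $\mathcal{W}^{E_i}$. Since the rest of your paragraph (the notation $E_{i-1}\oplus_{\succ}E_i^c$, the phrase ``strong-unstable foliation'') has it right, this reads as a slip rather than a conceptual error. Second, the absolute continuity of the $E_{i-1}$-sub-lamination inside each $E_i$-leaf is not covered by the uniformly partially hyperbolic absolute-continuity theory: $E_{i-1}$ contains the centre bundles $E_1^c,\dots,E_{i-1}^c$ and is therefore only non-uniformly expanding $\mu$-a.e., so the needed statement is Pesin-theoretic (absolute continuity of the fast unstable lamination inside a Pesin unstable leaf, \`a la Ledrappier--Young). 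Relying on \cite{CMY22} for exactly this point, as you propose, is the right call; your sketch correctly isolates it as the only non-routine step.
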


The main result of this section is as follows.

\begin{theorem}\label{34}
Let $f\in \P_k(M)$ with the partially hyperbolic splitting 
$
TM=E^u\oplus_{\succ} E_1^c\oplus_{\succ}\cdots\oplus_{\succ} E_k^c \oplus_{\succ} E^s.
$
For each $0\le i \le k-1$, if $\mu$ is a limit measure of $G_i^{erg}$, then
$$
\lambda^c_{i+1}(x)> 0, \quad \mu \textrm{-a.e.}~x.
$$
Moreover, there are at most finitely many ergodic Gibbs $E_k$-states.
\end{theorem}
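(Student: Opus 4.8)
The plan is to prove the two assertions together, proceeding by induction on $i$ and then harvesting the finiteness statement at the top level $i = k$. Fix $0 \le i \le k-1$ and let $\mu$ be a limit measure of the set $G_i^{erg}$ of ergodic Gibbs $E_i$-states, say $\mu = \lim_j \mu_j$ with each $\mu_j \in G_i^{erg}$. The first step is to observe that each $\mu_j$, being an ergodic Gibbs $E_i$-state, has all Lyapunov exponents along $E_i = E^u \oplus_\succ \cdots \oplus_\succ E_i^c$ positive; in particular $\lambda^c_{i+1}(\mu_j)$ refers to the next sub-bundle, on which we have no a priori sign. Since $E_i$-states are Gibbs $u$-states (Lemma \ref{sim}, giving $G_i \subset G_0$) and the set of Gibbs $u$-states is weak$^*$-compact (Proposition \ref{uc}(2)), the limit $\mu$ is a Gibbs $u$-state. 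Because $f \in \P_k(M)$, every Gibbs $u$-state is hyperbolic, so $\mu$ has no zero Lyapunov exponents; in particular $\lambda^c_{i+1}(x) \ne 0$ for $\mu$-a.e.\ $x$. The heart of the argument is therefore to rule out the negative sign.

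To control the sign, the idea is to use the entropy formula together with domination. For each $\mu_j \in G_i^{erg}$, absolute continuity of the conditional measures along Pesin unstable manifolds tangent to $E_i$ gives the Pesin (Ledrappier--Young) entropy identity $h_{\mu_j}(f) = \int \log|\det Df|_{E_i}|\,d\mu_j = \sum_{\ell=0}^{i} \lambda^+_\ell(\mu_j)$ summed over the exponents in $E_i$ (all positive). Passing to the limit, upper semicontinuity of entropy for $C^\infty$ (or here $C^2$, via the dominated-splitting results the paper already invokes) maps and continuity of the integrands $\log|\det Df|_{E_\ell}|$ yield $h_\mu(f) \ge \int \log|\det Df|_{E_i}|\,d\mu$; combined with the Ruelle inequality $h_\mu(f) \le \sum (\text{positive exponents of }\mu)$ one forces the positive part of the Lyapunov spectrum of $\mu$ to be at least $\int \log|\det Df|_{E_i}|\,d\mu$, i.e.\ the exponents along $E_0,\dots,E_i$ remain (on average) positive and $\mu$ actually carries absolutely continuous unstable conditionals along $E_i$ as well — this is exactly Proposition \ref{ugu}, whose hypothesis (uniformly positive exponents along $E_i$ for the approximants) holds because the $E_i$-direction exponents of a Gibbs $E_i$-state are uniformly bounded below by domination of $E_i$ over $E_{i+1}^c \oplus \cdots$. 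Thus $\mu$ is a Gibbs $E_i$-state. Now suppose, for contradiction, that $\lambda^c_{i+1}(x) < 0$ on a positive-$\mu$ set; passing to an ergodic component $\nu \in \mathcal{C}(\mu)$ (still a Gibbs $E_i$-state by Proposition \ref{cue}) with $\lambda^c_{i+1}(\nu) < 0$. Since $E_i$ dominates $E_{i+1}^c$ which dominates everything below, and $\nu$ has positive exponents on $E_i$ and a negative exponent on $E_{i+1}^c$, $\nu$ is a hyperbolic ergodic Gibbs $E_i$-state whose exponents along the complementary bundle $F = E_{i+1}^c \oplus \cdots \oplus E^s$ are all negative (domination propagates the negativity downward past $E_{i+1}^c$, and $E^s$ is uniformly contracting). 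By Proposition \ref{pes}, $\nu$ is then a physical measure. The plan here is to derive a contradiction with the inductive picture: such a $\nu$ would be an SRB/physical measure built only from the $E_i$-block, but a Gibbs $E_i$-state that is a limit of Gibbs $E_i$-states with larger unstable index; more precisely, I would argue that this contradicts the fact that $\mu$, and hence $\nu$, was obtained as a limit of measures $\mu_j$ whose $E_{i+1}^c$-exponents, integrated, cannot collapse to something negative without violating the domination bound $\lambda^c_{i+1}(\mu_j) > \lambda^c_{i+2}(\mu_j) > \cdots$ uniformly, i.e.\ the positivity is \emph{open} along the tower — making the negative-limit scenario impossible.

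For the finiteness of ergodic Gibbs $E_k$-states, the strategy is: by the first part (applied with $i = k-1$), any limit of $G_{k-1}^{erg}$ has $\lambda^c_k > 0$ a.e., so combined with $G_k \subset G_{k-1}$ and the fact that $E_k$-states have, by construction, positive exponents on all of $E_k = TM \ominus (E_{k+1}^c \oplus \cdots)$ — wait, $E_k = E^u \oplus E_1^c \oplus \cdots \oplus E_k^c$, the full center-unstable part — a Gibbs $E_k$-state has positive exponents along $E^u \oplus E_1^c \oplus \cdots \oplus E_k^c$ and negative exponents along $E^s$, hence is hyperbolic of a fixed index $\dim E_k$ and, by Proposition \ref{pes}, physical. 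Now suppose there were infinitely many distinct ergodic Gibbs $E_k$-states; they would have a weak$^*$-accumulation point $\mu_\infty$, which by Proposition \ref{ugu} (the uniform positivity on $E_k$ coming from domination as before) is again a Gibbs $E_k$-state with absolutely continuous unstable conditionals along the $\dim E_k$-dimensional Pesin unstable manifolds. Then I invoke the standard ``no accumulation of ergodic SRB measures with a fixed unstable dimension'' phenomenon: ergodic components of $\mu_\infty$ would be ergodic Gibbs $E_k$-states accumulated by the given family, but each ergodic physical measure has a basin of positive Lebesgue measure and distinct ergodic physical measures have disjoint basins, so only countably many can exist, and in a $C^2$ setting with uniform hyperbolic estimates along the accumulating sequence they cannot limit onto a new one without the basins overlapping — contradiction. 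The main obstacle, I expect, is precisely this last rigidity step: turning the soft statement ``$\mu_\infty$ is a Gibbs $E_k$-state'' into genuine finiteness requires a uniform Pesin-block / hyperbolic-times argument showing the accumulating ergodic Gibbs $E_k$-states eventually share a common Pesin block (uniform hyperbolicity constants), at which point a uniform local-unstable-manifold argument forces their basins to overlap; making the constants uniform along a sequence of distinct ergodic measures — rather than along a single one — is the delicate point, and is presumably where the ``more delicate analyses on various Gibbs $cu$-states'' promised in the introduction are needed.
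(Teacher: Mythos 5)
Your outline has the right shape — show the limit $\mu$ is a Gibbs $E_i$-state, then rule out a negative $\lambda^c_{i+1}$, and at level $k$ turn this into finiteness — but two of the load-bearing steps are not actually justified, and one of your justifications is false. First, you claim the uniform positivity of the $E_i$-exponents of Gibbs $E_i$-states (needed to invoke Proposition \ref{ugu}) follows from ``domination of $E_i$ over $E_{i+1}^c\oplus\cdots$.'' Domination is a relative statement between adjacent sub-bundles and gives no positive lower bound at all; for example, on a single one-dimensional center the exponent of a Gibbs state can a priori be arbitrarily close to $0$ from above. That uniform lower bound is precisely the content of Theorem \ref{C} in the paper, which is proved by induction on $i$ and is itself nontrivial. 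Second, to rule out a negative $\lambda^c_{i+1}$ you appeal to ``the positivity is open along the tower,'' i.e.\ that the $E_{i+1}^c$-exponents of the approximants $\mu_j$ cannot collapse to something negative because of domination. This is also false: a Gibbs $E_i$-state only has constrained sign on $E_i$; its exponent along $E_{i+1}^c$ is unconstrained and can be negative. The correct mechanism is the paper's Theorem \ref{sec}: if the limit $\mu$ (already known to be a Gibbs $E_i$-state) had an ergodic component $\nu$ with all exponents along $F_i=E_{i+1}^c\oplus\cdots\oplus E^s$ negative, then $\nu$ would be physical by Proposition \ref{pes}; one then uses a Pesin block of $\nu$, the absolute continuity of the Pesin stable lamination, and the fact that the $\mu_n$ are ergodic Gibbs $E$-states charging the same local unstable disks, to conclude $\mu_n=\nu$ for all large $n$, contradicting that the $\mu_n$ are distinct. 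Nothing in your write-up supplies this lamination step, which is where the actual work is.

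For the finiteness at level $k$, you correctly identify that the hard part is converting ``$\mu_\infty$ is a Gibbs $E_k$-state'' into a contradiction, but your proposed route (disjoint basins, countability, ``cannot limit onto a new one without basins overlapping'') never closes. The paper's resolution is shorter and cleaner than you anticipate: apply Theorem \ref{sec} once more to the accumulation point $\mu_\infty$ of $G_k^{erg}$. Since $\mu_\infty$ is a Gibbs $E_k$-state, Theorem \ref{sec} forces $\mu_\infty$ to carry \emph{non-negative} Lyapunov exponents along the complementary bundle $F=E^s$. But $E^s$ is uniformly contracting, so all exponents there are strictly negative for every invariant measure — contradiction, so $G_k^{erg}$ is finite. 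In short, both halves of the theorem reduce to the same lemma (Theorem \ref{sec}), which your proposal gestures at but does not prove; the entropy/Ledrappier--Young part of your argument is consistent with Proposition \ref{ugu} as a black box, but the two inputs you supply to apply it and to extract the sign are the gaps.
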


\subsection{Reduction of Theorem \ref{34}}
Theorem \ref{34} will be deduced from the following two theorems.

\begin{theorem}\label{sec}
Let $f$ be a diffeomorphism with a dominated splitting $TM=E\oplus_{\succ} F$. If $\mu$ is a limit measure of the set of ergodic Gibbs $E$-states, which is a Gibbs $E$-state, then for $\mu$-a.e. $x\in M$, there exist non-negative Lyapunov exponents along $F$.
\end{theorem}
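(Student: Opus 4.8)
The plan is to argue by contradiction. Suppose $\mu$ is a limit of ergodic Gibbs $E$-states $\mu_n$, that $\mu$ is itself a Gibbs $E$-state, but that $\mu$ admits some strictly negative Lyapunov exponent along $F$ on a positive-measure set. First I would pass to an ergodic component $\nu$ of $\mu$ which is still a Gibbs $E$-state (by Proposition \ref{cue}) and which has a strictly negative exponent along $F$. Then $\nu$ is an ergodic Gibbs $E$-state whose Lyapunov exponents along $E$ are uniformly positive and which has at least one negative exponent along $F$; by Proposition \ref{pes} — provided all $F$-exponents of $\nu$ are negative — $\nu$ would be a physical measure, and more to the point, $\nu$ would be ``more hyperbolic'' than the approximating $\mu_n$ are guaranteed to be. The point of the argument is that a limit of Gibbs $E$-states cannot suddenly gain contraction along $F$ that the $\mu_n$ did not have: the quantities $\int \log\|Df^{-1}|_{F}\|\,{\rm d}\mu_n$ are continuous in $\mu_n$, so $\int \log\|Df|_{F}\|\,{\rm d}\mu = \lim \int \log\|Df|_F\|\,{\rm d}\mu_n \ge 0$ would already force the top $F$-exponent of $\mu$ to be $\ge 0$ if the $\mu_n$ had nonnegative top $F$-exponent. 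So the real content is to show the approximating ergodic Gibbs $E$-states $\mu_n$ themselves have nonnegative Lyapunov exponents along $F$.

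The key step is therefore the following dichotomy for an \emph{ergodic} Gibbs $E$-state $\mu_n$: either all its Lyapunov exponents along $F$ are negative, or its top Lyapunov exponent along $F$ is $\ge 0$. In the first case, $\mu_n$ is hyperbolic of index $\dim E$, with absolutely continuous conditional measures along Pesin unstable manifolds tangent to $E$ — i.e.\ $\mu_n$ is a hyperbolic SRB measure. But such a measure cannot be arbitrarily close in the weak$^*$ topology to a Gibbs $E$-state whose $F$-spectrum includes a negative exponent unless the limit is again of this hyperbolic SRB type; and iterating this, one would need to exclude that $\mu$ itself has \emph{all} $F$-exponents negative. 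Here I would use the entropy/volume inequality of Proposition \ref{cug} or the Ruelle/Margulis-type entropy formula: a Gibbs $E$-state satisfies ${\rm h}_\mu(f) \ge \int \log|\det Df|_E|\,{\rm d}\mu$, and if additionally $\mu$ were a hyperbolic SRB measure of index $\dim E$ this would be an equality, whereas the presence of a negative $F$-exponent that does not persist under the limit contradicts upper semicontinuity of the relevant quantities. Combining the two cases, each $\mu_n$ has a nonnegative top $F$-exponent, hence $\int \log\|Df|_F\|\,{\rm d}\mu_n \ge$ (something controlling the top exponent from below by $0$), and passing to the limit gives a nonnegative top $F$-exponent for $\mu$, then for its ergodic component $\nu$, contradicting our choice of $\nu$.

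The main obstacle I anticipate is making the ``contraction along $F$ cannot appear in the limit'' intuition precise at the level of the full Lyapunov spectrum along $F$, not just the top exponent: weak$^*$ convergence only controls integrals of continuous functions such as $\log\|Df|_F\|$ and $\log|\det Df|_F|$, which bound the extreme $F$-exponents and their sum, but not the intermediate ones. The cleanest route is probably to reduce to the top $F$-exponent by working with the outermost sub-bundle of a refinement of $F$ compatible with the dominated splitting, or to note that for the application (Theorem \ref{34}, where $F$ will be refined into one-dimensional centers plus $E^s$) one only ever needs the statement one exponent at a time, so it suffices to handle the case $\dim F = 1$ plus the uniformly contracted $E^s$ separately and then induct. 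A secondary technical point is ensuring the ergodic component $\nu$ we extract is still a limit-compatible object so that Proposition \ref{ugu} and the semicontinuity arguments apply to it; this should follow from ergodic decomposition together with Proposition \ref{cue}, but needs to be stated carefully.
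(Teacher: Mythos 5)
Your opening move — extracting an ergodic component $\nu$ of $\mu$ with all $F$-exponents negative, noting by Proposition \ref{cue} that $\nu$ is a Gibbs $E$-state, and by Proposition \ref{pes} that it is physical — matches the paper's first step. After that, however, the two arguments diverge and yours has a genuine gap. The ``dichotomy'' you propose for the $\mu_n$ (``either all $F$-exponents negative, or top $F$-exponent $\ge 0$'') is of course tautologically true, but the claim that ``such a [hyperbolic SRB] measure cannot be arbitrarily close in the weak$^*$ topology to a Gibbs $E$-state whose $F$-spectrum includes a negative exponent unless the limit is again of this hyperbolic SRB type'' is essentially the theorem itself restated, not an argument for it. You offer no independent reason why the $\mu_n$ should have nonnegative top $F$-exponents, and the entropy inequality of Proposition \ref{cug} concerns empirical measures of Lebesgue-generic points, not arbitrary Gibbs $E$-states, so it does not close the loop.

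Even if one granted that every $\mu_n$ had nonnegative top $F$-exponent, the weak$^*$ limit step fails: passing to the limit in $\int\log\|Df|_F\|\,{\rm d}\mu_n\ge 0$ only yields $\int\log\|Df|_F\|\,{\rm d}\mu\ge 0$, which is an \emph{average} over the ergodic decomposition of $\mu$ and says nothing about an individual ergodic component $\nu$ of $\mu$. It is perfectly consistent with $\int\log\|Df|_F\|\,{\rm d}\mu\ge 0$ that some component $\nu$ has $\int\log\|Df|_F\|\,{\rm d}\nu<0$. So the claimed contradiction ``then for its ergodic component $\nu$'' does not follow, and your anticipated difficulty with intermediate exponents is not the only obstacle — the argument breaks already at the level of the top $F$-exponent and the passage from the mean to ergodic components. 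The paper's actual proof is geometric rather than spectral: it builds a Pesin block for $\nu$, takes a measurable family $\cS^u$ of Pesin unstable disks, and uses absolute continuity of the Pesin stable lamination to show ${\rm Leb}_\gamma(\B(\nu,f))>0$ for \emph{every} $\gamma\in\cS^u$; since $\mu(S^u)>0$ and $\mu_n\to\mu$, for large $n$ the ergodic Gibbs $E$-state $\mu_n$ also charges $S^u$ and hence has a disk $\gamma_n\in\cS^u$ with ${\rm Leb}_{\gamma_n}$-a.e.\ point in $\B(\mu_n,f)$; disjointness of basins of distinct ergodic measures then forces $\mu_n=\nu$ for all large $n$, contradicting the assumption that the $\mu_n$ are pairwise distinct (an assumption which is also needed — the theorem is false without it, since a hyperbolic SRB measure with all $F$-exponents negative is trivially a ``limit'' of itself — and your write-up does not flag it). You would need to rebuild the argument along these lines; the semicontinuity-of-exponents route does not produce ergodic-component-wise control from weak$^*$ convergence.
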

%

\begin{theorem}\label{C}
If $f\in \P_k(M)$, then for every $1\le i \le k$, either $\# G_i^{erg}<+\infty$ or any limit measure of $G_i^{erg}$ is contained in $G_i$, whose Lyapunov exponents along $E_i^c$ are uniformly positive.
\end{theorem}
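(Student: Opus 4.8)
The plan is to fix $1 \le i \le k$ and run a dichotomy argument. First I would take an arbitrary limit measure $\mu$ of a sequence $\mu_n \in G_i^{erg}$, say $\mu_n \to \mu$ in the weak$^*$ topology. By Lemma \ref{sim} each $\mu_n$ is in particular a Gibbs $u$-state, hence by Proposition \ref{uc}\eqref{22} the limit $\mu$ is a Gibbs $u$-state; since $f \in \P_k(M)$, $\mu$ is hyperbolic, so for $\mu$-a.e.\ $x$ the central exponent $\lambda_i^c(x)$ is either strictly positive or strictly negative (and analogously for the other $E_j^c$). The key quantitative input is that $\lambda_i^c(\mu_n) = \int \log\|Df|_{E_i^c}\| \, d\mu_n > 0$ for every $n$ because $\mu_n$ is a Gibbs $E_i$-state; passing to the limit gives only $\lambda_i^c(\mu) = \int \log\|Df|_{E_i^c}\|\,d\mu \ge 0$, which by hyperbolicity and ergodic decomposition forces every ergodic component of $\mu$ to have positive $E_i^c$-exponent. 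At this stage one wants to upgrade "$\mu$ is a Gibbs $u$-state with positive $E_i^c$-exponent" to "$\mu$ is a Gibbs $E_i$-state", and this is exactly what Proposition \ref{ugu} delivers provided the $E_i$-exponents stay \emph{uniformly} positive along the sequence.

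So the heart of the matter is the uniformity: I would argue that either one can extract, along \emph{every} sequence in $G_i^{erg}$, a uniform lower bound $\lambda_i^c(\mu_n) \ge c > 0$ (in which case Proposition \ref{ugu} applies to give that every limit measure is a Gibbs $E_i$-state with uniformly positive $E_i^c$-exponent, the second alternative), or else there is a sequence $\mu_n \in G_i^{erg}$ with $\lambda_i^c(\mu_n) \to 0$. In the latter case I want to derive $\# G_i^{erg} < +\infty$. The idea is that if $G_i^{erg}$ were infinite, by compactness we could produce a limit measure $\mu$ with $\lambda_i^c(\mu) = 0$ on a positive-measure set of ergodic components; but $\mu$ is still a Gibbs $u$-state (Proposition \ref{uc}\eqref{22}) and $f \in \P_k(M)$ forces it to be hyperbolic — contradiction. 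Hence the set of attainable values of $\lambda_i^c$ on $G_i^{erg}$ is bounded away from $0$, which loops back to the first alternative \emph{unless} $G_i^{erg}$ is already finite. Organizing this cleanly is the delicate point: one must be careful that "infinitely many ergodic Gibbs $E_i$-states" genuinely produces, via Proposition \ref{uc}\eqref{22} applied to the (compact) set of Gibbs $u$-states containing $G_i^{erg}$, a limit point whose central exponent can be made to vanish, and that this limit point is still subject to the hyperbolicity hypothesis of $\P_k(M)$.

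The main obstacle I anticipate is precisely establishing the uniform positivity of $\lambda_i^c$ over all of $G_i^{erg}$ in the non-finite case, i.e.\ ruling out a "slow escape to zero exponent" scenario; this is where the hyperbolicity of \emph{all} Gibbs $u$-states (not just the Gibbs $E_i$-states themselves) is essential, since it is a closed condition that survives the limit. A secondary technical point is handling the ergodic decomposition of the limit measure $\mu$: a priori $\mu$ need not be ergodic, so I would invoke Proposition \ref{cue} to see that each ergodic component of a Gibbs $E_i$-state is again a Gibbs $E_i$-state, and combine this with the fact that a hyperbolic Gibbs $u$-state cannot have a zero central exponent on a positive-measure set of components. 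Once uniformity is in hand, Proposition \ref{ugu} closes the argument immediately, giving that the limit measure lies in $G_i$ with uniformly positive $E_i^c$-exponents, which is the second horn of the stated dichotomy.
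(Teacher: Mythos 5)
Your proposal has two genuine gaps, both stemming from the same confusion between integral and pointwise statements, and from missing the paper's inductive structure.

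First, the step ``passing to the limit gives $\lambda_i^c(\mu)\ge 0$, which by hyperbolicity and ergodic decomposition forces every ergodic component of $\mu$ to have positive $E_i^c$-exponent'' is simply false. Since $\dim E_i^c=1$, hyperbolicity tells you each ergodic component has $\lambda_i^c\ne 0$, but the average $\int \lambda_i^c\,d\mu\ge 0$ is perfectly compatible with a positive-measure collection of components having $\lambda_i^c<0$, balanced by others with large positive exponent. The same confusion recurs in your ``escape to zero'' discussion: $\lambda_i^c(\mu)=0$ does not produce ``a zero central exponent on a positive-measure set of components.'' What the paper actually uses to obtain the needed \emph{pointwise} statement $\lambda_{i}^c(x)>0$ $\mu$-a.e.\ is Theorem~\ref{sec} (a limit of ergodic Gibbs $E$-states which is itself a Gibbs $E$-state has non-negative exponents along $F$, $\mu$-a.e.), combined with hyperbolicity of Gibbs $u$-states. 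Your proposal never invokes Theorem~\ref{sec}, and without it the pointwise conclusion is out of reach.

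Second, and structurally, you treat $i$ as a fixed index and make no induction, whereas the paper's proof is an induction on $i$. The reason the induction is unavoidable is tied to the first gap: to apply Theorem~\ref{sec} with $E=E_{i-1}$ and $F=E_i^c\oplus\cdots\oplus E^s$ (so as to extract information about $\lambda_i^c$), one must first know that the limit measure $\mu$ is a Gibbs $E_{i-1}$-state, not merely a Gibbs $u$-state. For $i=1$ this is free from compactness of $G_0$ (Proposition~\ref{uc}(\ref{22})), but for $i\ge 2$ one needs exactly the conclusion of Theorem~\ref{C} at level $i-1$: since $G_i^{erg}\subset G_{i-1}^{erg}$, either $G_{i-1}^{erg}$ is finite (whence $G_i^{erg}$ is finite and you are done), or any limit of it lies in $G_{i-1}$, giving $\mu\in G_{i-1}$. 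Your proposal only ever places $\mu$ in $G_0$ and hence can only control $\lambda_1^c$. To repair the argument you would need to incorporate both the induction and Theorem~\ref{sec}, and then the uniformity argument you sketch (extracting ergodic components $\nu_n$ with $\lambda_i^c(\nu_n)\to 0$, passing to a limit $\nu$, again placing $\nu$ in the right Gibbs class via Proposition~\ref{cue} and the inductive hypothesis, and contradicting the a.e.\ positivity via Birkhoff) goes through essentially as in the paper.
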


Theorem \ref{sec} can help us to investigate the behavior of Lyapunov exponents on complementary sub-bundle for limit measures of the set of Gibbs $E$-states. This observation will also be used in the proof of Theorem \ref{C}.
We provide the proofs of Theorems \ref{sec} and \ref{C} in next two subsections.

\medskip
Let us state the following interesting result as a byproduct of Theorem \ref{C}, it asserts the uniformity on Lyapunov exponents of Gibbs $E_i$-states along $E_i$, $1\le i \le k$. 
\begin{corollary}
For $f\in \P_k(M)$, there exists $\alpha>0$ such that for every $1\le i \le k$, for every Gibbs $E_i$-state $\mu$,
one has that 
$$
\lambda^c_i(x)>\alpha, \quad \mu \textrm{-a.e.}~x.
$$
\end{corollary}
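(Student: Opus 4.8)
The plan is to extract a uniform lower bound $\alpha>0$ for the central exponent $\lambda^c_i(x)$ valid simultaneously over all Gibbs $E_i$-states $\mu$ and all $1\le i\le k$. First I would fix $i$ and argue by contradiction: suppose no such $\alpha_i>0$ exists for level $i$. Then there is a sequence of Gibbs $E_i$-states $\mu_n$ together with points $x_n$ (in the Oseledec-regular sets) such that $\lambda^c_i(x_n)\to 0^+$, equivalently one can choose ergodic components with central exponents along $E_i^c$ tending to $0$. By Proposition \ref{cue} each ergodic component of a Gibbs $E_i$-state is again a Gibbs $E_i$-state, so after passing to ergodic components we may assume each $\mu_n\in G_i^{erg}$ with $\lambda^c_i(\mu_n)\to 0$. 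Passing to a weak$^*$ limit $\mu$ (the space of measures is compact), $\mu$ is a limit measure of $G_i^{erg}$ with $\lambda^c_i(\mu)=\lim_n \lambda^c_i(\mu_n)=0$, using continuity of $\mu\mapsto\int\log\|Df|_{E^c_i}\|\,{\rm d}\mu$ since the integrand is continuous (the bundle $E^c_i$ is continuous and $f$ is $C^1$).

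Now I invoke Theorem \ref{C}: for this $i$, since we are assuming $\#G_i^{erg}=+\infty$ is \emph{not} forced — but the contradiction hypothesis produces infinitely many distinct $\mu_n$ (their central exponents are distinct for large $n$), so indeed $\#G_i^{erg}=+\infty$, and hence by Theorem \ref{C} every limit measure of $G_i^{erg}$, in particular $\mu$, lies in $G_i$ and has Lyapunov exponents along $E_i^c$ \emph{uniformly positive}. In particular $\lambda^c_i(x)>0$ for $\mu$-a.e. $x$, and by ergodicity-decomposition the integral $\lambda^c_i(\mu)>0$, contradicting $\lambda^c_i(\mu)=0$. This gives, for each $i$, a constant $\alpha_i>0$ with $\lambda^c_i(x)>\alpha_i$ for $\mu$-a.e. $x$ and every $\mu\in G_i$ — here one should be slightly careful: the bound "uniformly positive exponents along $E_i^c$" in Theorem \ref{C} refers to limit measures, so to get the bound for \emph{all} Gibbs $E_i$-states one repeats the contradiction argument directly at the level of $G_i^{erg}$, noting that any Gibbs $E_i$-state disintegrates (Proposition \ref{cue}) into ergodic Gibbs $E_i$-states, so a uniform lower bound over $G_i^{erg}$ transfers to all of $G_i$. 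Finally set $\alpha=\min_{1\le i\le k}\alpha_i>0$, which works for all levels simultaneously.

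The main obstacle I anticipate is the bookkeeping in the contradiction argument: one must ensure that the "uniformly positive" conclusion of Theorem \ref{C} genuinely yields a quantitative bound that survives the weak$^*$ limit (it does, because "uniformly positive along $E_i^c$" should mean there is a single constant bounding the exponents below, independent of the point, hence the integral against $\mu$ is bounded below by that same constant), and that the dichotomy in Theorem \ref{C} is being applied to the correct set — namely that the failure of a uniform bound forces $\#G_i^{erg}=+\infty$, which puts us in the second alternative of Theorem \ref{C}. Once that is set up, the rest is soft: compactness of the space of measures, continuity of the exponent functional, and the ergodic decomposition via Proposition \ref{cue}. I would also remark that the case $\#G_i^{erg}<+\infty$ is handled trivially, since a finite set of ergodic Gibbs $E_i$-states each with positive central exponent admits a minimum, and again Proposition \ref{cue} propagates this to all Gibbs $E_i$-states.
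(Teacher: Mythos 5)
Your proposal is correct and follows essentially the same route as the paper's own proof: argue by contradiction, pass to ergodic components via Proposition~\ref{cue} to get a sequence in $G_i^{erg}$ with $\lambda^c_i(\mu_n)\to 0$, take a weak$^*$ limit $\mu$, invoke Theorem~\ref{C} to get uniform positivity of exponents along $E^c_i$ for $\mu$, and contradict $\lambda^c_i(\mu)=0$ by integration (Birkhoff). The only difference is that you spell out the dichotomy in Theorem~\ref{C} (noting the contradiction hypothesis forces $\#G_i^{erg}=+\infty$, and treating the finite case separately), which the paper leaves implicit; this is a sound clarification rather than a change of method.
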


\begin{proof}
Assuming contrary, there exist $1\le i \le k$ and a sequence of Gibbs $E_i$-states $\{\mu_n\}_{n\in \NN}$ for which there exists $\nu_n\in \mathcal{C}(\mu_n)$ for every $n\in \NN$ so that
$$
\lim_{n\to +\infty}\lambda^c_i(\nu_n)=0.
$$
%
Without loss of generality, we suppose that $\{\nu_n\}_{n\in \NN}$ converges to some $f$-invariant measure $\nu$ when $n\to +\infty$. Note that $x\mapsto \log \|Df|_{E_i^c(x)}\|$ is continuous. By the weak$^*$-convergence we have
\begin{equation}\label{eu}
\lambda^c_i(\nu)=\lim_{n\to +\infty}\lambda^c_i(\nu_n)=0.
\end{equation}
By Proposition \ref{cue}, each $\nu_n$ is an ergodic Gibbs $E_i$-state for every $n\in \NN$. Hence, $\nu$ is a limit measure of the set of ergodic Gibbs $E_i$-states. By Theorem \ref{C}, there exists $a>0$ such that 
$$
\lambda^c_i(x)>a,\quad \nu \textrm{-a.e.}~x.
$$
Noting ${\rm dim}E_i^c=1$, by applying Birkhoff's ergodic theorem (see e.g. \cite[Theorem 1.14]{wa82}) one obtains
$$
\lambda_i^c(\nu)=\int \lambda_i^c(x){\rm d}\nu>a,
$$
which is in contradiction with convergence (\ref{eu}).
\end{proof}

Now we give the proof of Theorem \ref{34} by assuming Theorems \ref{sec} and \ref{C}. 

\begin{proof}[Proof of Theorem \ref{34}]\label{se}
For $0\le i \le k-1$, let $\mu$ be a limit measure of $G_i^{erg}$.
We know from Proposition \ref{ugu} and Theorem \ref{C} that $\mu$ is a Gibbs $E_i$-state, which is also a Gibbs $u$-state by Lemma \ref{sim}. So, $\mu$ is hyperbolic by definition of $\P_k(M)$. This together with Theorem \ref{sec} ensures that for $\mu$-a.e. $x\in M$, there exist positive Lyapunov exponents along $F_i:=E_{i+1}^c\oplus_{\succ} \cdots \oplus_{\succ} E^s$. As $E_{i+1}^c$ has dimension one, $\lambda^c_{i+1}(x)$ is just the largest Lyapunov exponent along $F_i$.
Consequently, we obtain 
$$
\lambda^c_{i+1}(x)> 0, \quad \mu \textrm{-a.e.}~x.
$$ 

\smallskip
To show the result on Gibbs $E_k$-states, assume by contradiction that there are infinitely many ergodic Gibbs $E_k$-states. This implies that there exists a sequence of different ergodic Gibbs $E_k$-states, which converges to an invariant measure $\mu$. By Theorem \ref{C}, $\mu$ must be a Gibbs $E_k$-state as well. However, from Theorem \ref{sec} we get that $\mu$ have positive Lyapunov exponents along $E^s$, which is a contradiction. Thus, the proof is complete.
\end{proof}

\smallskip

\subsection{Proof of Theorem \ref{sec}}\label{sedd}
Let $\{\mu_n\}_{n\in \NN}$ be a sequence of different ergodic Gibbs $E$-states, which converges to $\mu$ as $n\to +\infty$, and $\mu$ is a Gibbs $E$-state. 

\smallskip
Arguing by contradiction, assume $A\subset M$ satisfying $\mu(A)>0$ and
$$
\lim_{n\to +\infty}\frac{1}{n}\log \|Df^n|_{F(x)}\|<0,\quad \forall~ x\in A.
$$ 
This implies that there exist ergodic components of $\mu$ whose Lyapunov exponents along $F$ are all negative. Note that these ergodic measures are Gibbs $E$-states by applying Proposition \ref{cue}. By Proposition \ref{pes}, we know that they are also physical measures, so there are at most countably many such measures. Consequently, one can find an ergodic component $\nu$ of $\mu$ such that all its Lyapunov exponents along $F$ are negative, and there exists $b\in (0,1]$ such that $\mu$ can be rewritten as
$$
\mu=b \nu+(1-b)\eta
$$
for some Gibbs $E$-state $\eta$. 

\smallskip
Now we introduce the Pesin set associated to $\nu$ as follows.
Let $\chi_u$ be the smallest positive Lyapunov exponent of $\nu$, and let $\chi_s$ be the largest negative Lyapunov exponent of $\nu$. Fix $0<\epsilon\ll \min\{\chi_u,-\chi_s\}$. One can define the Pesin set $\Lambda$ and Pesin blocks $\{\Lambda_k\}_{k\in \NN}$ associated to $(\chi_u,\chi_s,\epsilon)$ as in Definition \ref{dpb}, where we have $E^+=E$ and $E^-=F$.
Then, we know that $\nu(\Lambda)=1$ by definition. Hence, we can choose a Pesin block $\Lambda_k$ satisfying $\nu(\Lambda_{k})>0$.

\smallskip
By Pesin theory \cite[$\S2$ \& $\S4$]{bp02}, there is $\delta_k>0$ such that every $x\in \Lambda_{k}$ admits a local Pesin stable manifold $\F^s_{\delta_k}(x)$ and a local Pesin unstable manifold $\F^u_{\delta_k}(x)$ with size $\delta_k$. Moreover, from domination of the splitting one can check that $\F^u_{\delta_k}(x)$ is tangent\footnote{This means that $T_y\F^u_{\delta_k}(x)=E(y)$ for every $y\in \F^u_{\delta_k}(x)$.} to $E$ and $\F^s_{\delta_k}(x)$ is tangent to $F$.


\smallskip
Fixing $r\ll \delta_k$. By compactness of $\Lambda_{k}$, one can take $p\in \Lambda_{k}$ such that 
\begin{equation}\label{ps}
\nu(\Lambda_{k}\cap B(p,r))>0.
\end{equation}
From the continuity of dominated splitting, $r$ can be taken such that for any $y,z\in B(p,r)\cap \Lambda_k$, $\F^u_{\delta_k}(y)$ intersects $\F^s_{\delta_k}(z)$ transversely. 

\smallskip
Let us take a smooth embedded disk $D^s\ni p$ whose tangent space is close enough to $F$. Then, 
for any point $z\in \Lambda_k\cap B(p,r)$, $\F_{\delta_k}^u(z)$ intersects $D^s$ transversely at a single point, denoted as $z_s$.
Up to reducing $r$, $\F_{\delta_k}^u(z)$ would contain the disk of radius $\delta_k/2$ centered at $z_s$, denoted as $\F_{\delta_k/2}^u(z_s)$. This gives a measurable partition consists of unstable disks as follows:
$$
\cS^u=\{\mathcal{F}_{\delta_k/2}^u(z_s): z\in \Lambda_k\cap B(p,r)\}.
$$

\smallskip
Let $S^u$ be the union of unstable disks from $\cS^u$.
We have $\nu(S^u\cap \Lambda_k)>0$ by (\ref{ps}). 
The ergodicity of $\nu$ gives that 
$$
\nu(\B(\nu,f)\cap S^u\cap \Lambda_k)=\nu(S^u\cap \Lambda_k)>0.
$$
Since $\nu$ is a Gibbs $E$-state, the conditional measures of $\nu|_{S^u}$ along unstable disks of $\mathcal{S}^u$ are absolutely continuous w.r.t. Lebesgue measures on these disks, which ensures that one can take $\gamma_{\nu}\in \mathcal{S}^u$ for which there exists a subset $B\subset \gamma_{\nu}\cap \Lambda_{k}\cap \B(\nu,f)$ such that 
\begin{equation}\label{n}
{\rm Leb}_{\gamma_{\nu}}(B)>0.
\end{equation}

\smallskip
Note that for every $x\in \Lambda_{k}\cap B(p,r)$, $\F^s_{\delta_k}(x)$ intersects every local Pesin unstable manifold from $\mathcal{S}^u$ transversely. By applying the absolute continuity of Pesin stable lamination, (\ref{n}) implies
$$
{\rm Leb}_{\gamma}\left(\bigcup_{z\in B}\F_{\delta_k}^s(z)\cap \gamma\right)>0 \quad \textrm{for every}~\gamma \in  \mathcal{S}^u.
$$
Together with the observation that $\F_{\delta_k}^s(z)\subset \B(\nu,f)$ for every $z\in B$, this yields that 
\begin{equation}\label{kkf}
{\rm Leb}_{\gamma}(B(\nu,f))>0 \quad \textrm{for every}~ \gamma\in \mathcal{S}^u.
\end{equation}

\smallskip
Recall that we have $\nu(S^u)>0$, so $\mu(S^u)\ge b\nu(S^u)>0$. Up to reducing the sizes of unstable disks of $\mathcal{S}^u$ slightly, we may assume 
$
\mu(\partial(S^u))=0,
$
which implies that 
$$
\lim_{n\to +\infty}\mu_n(S^u)=\mu(S^u).
$$
In particular, there exists $n_0\in \NN$ such that $\mu_n(S^u)>0$ for every $n\ge n_0$.
For every $n\ge n_0$, since $\mu_n$ is an ergodic Gibbs $E$-state, one can find $\gamma_n\in \mathcal{S}^u$ such that ${\rm Leb}_{\gamma_n}$-almost every point of $\gamma_n$ belongs to $\B(\mu_n,f)$. Take (\ref{kkf}) into account, we see that $\mu_n=\nu$ for every $n\ge n_0$, which is contrary to our assumption.

$\hfill \qed$


\subsection{Proof of Theorem \ref{C}}\label{cp}
We prove the result by induction on $1\le i \le k$. 
Let us start by proving the case $i=1$. Assume by contradiction that $G_1^{erg}$ is infinite. Consider $\mu$ as a limit measure of $G_1^{erg}$, thus
there is a sequence of ergodic Gibbs $E_1$-states $\{\mu_n\}_{n\in \NN}$ such that 
$$
\mu_n\xrightarrow{weak^*}\mu \quad \textrm{when}~n\to +\infty.
$$

\smallskip
We have $\{\mu_n\}_{n\in \NN}\subset G_0$ as $G_1\subset G_0$ by Lemma \ref{sim}. 
From item (\ref{22}) of Proposition \ref{uc}, we know $\mu\in G_0$.
By Theorem \ref{sec} and the hyperbolicity assumption on Gibbs $u$-states, we get
$$
\lambda^c_1(x)>0,\quad \mu \textrm{-a.e.}~x.
$$

\smallskip
To complete the proof of the case $i=1$, by Proposition \ref{ugu} it suffices to verify that the Lyapunov exponents of $\mu$ are uniformly positive along $E_1^c$. Arguing by contradiction, there exists a sequence of ergodic measures $\{\nu_n\}_{n\in \NN}$ in  $\cC(\mu)$ such that 
$$
\lim_{n\to +\infty}\lambda^c_1(\nu_n)=0.
$$
%
Up to extracting a subsequence, we assume that $\{\nu_n\}_{n\in \NN}$ converges to an $f$-invariant measure $\nu$ as $n\to +\infty$. Consequently, we have
\begin{equation}\label{gud}
\lambda^c_1(\nu)=\lim_{n\to +\infty} \lambda^c_1(\nu_n)=0.
\end{equation}
Since we have shown $\mu\in G_0$, and $\{\nu_n\}_{n\in \NN}$ are ergodic components of $\mu$, item (\ref{11}) of Proposition \ref{uc} gives that $\nu_n\in G^{erg}_0$ for every $n\in \NN$. So, $\nu$ is a limit measure of $G_0^{erg}$. 
By Theorem \ref{sec} and the hyperbolicity of Gibbs $u$-states, we conclude that
$$
\lambda^c_1(x)>0,\quad \nu \textrm{-a.e.}~x.
$$ 
By Birkhoff's ergodic theorem, it follows that 
$$
\lambda^c_1(\nu)=\int \lambda^c_1(x) {\rm d}\nu>0,
$$
where we use the fact ${\rm dim}E_1^c=1$.
This gives the contradiction to (\ref{gud}). Thus, the case $i=1$ is verified.

\smallskip
Inductively, we assume that the theorem holds for $i=\ell$($\ell<k$), and we then show that it is true for $i=\ell+1$. Assume that $G_{\ell+1}^{erg}$ is an infinite set. Let $\{\mu_n\}_{n\in \NN}$ be a sequence of ergodic measures in $G_{\ell+1}$, which converges to $\mu$ as $n\to +\infty$. In view of Proposition \ref{ugu}, we need only to show
\begin{equation}\label{uni2}
\inf_{\nu\in \cC(\mu)}\left\{\lambda^c_{\ell+1}(\nu)\right\}>0.
\end{equation}

\smallskip
As Lemma \ref{sim} tells us that $G_{\ell+1}\subset G_{\ell}\subset G_0$ and we assume that the theorem is true for $i=\ell$, we get $\mu\in G_{\ell}$. 
By Theorem \ref{sec}, together with the assumption of Gibbs $u$-states we know that $\mu$ admits only positive Lyapunov exponents along $E_{\ell+1}^c$.

\smallskip
Now we begin to show (\ref{uni2}) is true. Assume by contradiction that there exists a sequence of ergodic measures $\{\nu_n\}_{n\in \NN}\subset \cC(\mu)$, which converges to an $f$-invariant measure $\nu$ and satisfies
\begin{equation}\label{et}
\lim_{n\to +\infty}\lambda^c_{\ell+1}(\nu_n)=\lambda^c_{\ell+1}(\nu)=0.
\end{equation}

Since $\mu\in G_{\ell}$,  we know from Proposition \ref{cue} that $\{\nu_n\}_{n\in\NN}$ is a sequence of ergodic measures in $G_{\ell}\subset G_0$. 
As we have assumed that this theorem is true for $i=\ell$, the limit measure $\nu$ of $\{\nu_n\}_{n\in \NN}$ is also a Gibbs $E_{\ell}$-state. By Theorem \ref{sec}, we have for $\nu$-a.e. $x\in M$ that
$
\lambda^c_{\ell+1}(x)\ge 0.
$
Moreover, it follows from item (\ref{22}) of Proposition \ref{uc} that $\nu\in G_0$. Thus, the hyperbolicity assumption on Gibbs $u$-states then gives
$$
\lambda^c_{\ell+1}(x)> 0, \quad \nu \textrm{-a.e.}~x.
$$
Therefore, using the Birkhoff's ergodic theorem again, we get
$$
\lambda^c_{\ell+1}(\nu)=\int \lambda^c_{\ell+1}(x) {\rm d}\nu>0,
$$
contradicting to the convergence (\ref{et}). 
Therefore, (\ref{uni2}) is true and we complete the proof of Theorem \ref{C}.

$\hfill \qed$

\section{Proof of Theorem \ref{TheoA}}\label{A-proof}

Let us consider $f\in \P_k(M)$ with
 the partially hyperbolic splitting 
$$
TM=E^u\oplus_{\succ} E_1^c\oplus_{\succ}\cdots\oplus_{\succ} E_k^c \oplus_{\succ} E^s.
$$
%
For every $0\le i \le k$, put 
$$
\G_i=\left\{\mu: \mu\in G_i^{erg} ,~\lambda^c_{i+1}(\mu)<0\right\}.
$$
%
Recall that $G_i^{erg}$ is the space of ergodic Gibbs $E_i$-states ($E_i=E^u\oplus_{\succ}\cdots \oplus_{\succ}E^c_i$) for every $0\le i \le k$, respectively; and 
$$
\lambda^c_{i+1}(\mu)=\int \log \|Df|_{E_{i+1}^c}\| {\rm d} \mu.
$$
Note that here we set $E_{k+1}^c=E^s$, and
$\lambda^c_{k+1}(\mu)$ denotes the maximal Lyapunov exponent of $\mu$ along $E^s$, which is negative automatically as the uniform contraction on $E^s$.

\medskip
We decompose the set of ergodic physical(SRB) measures into different levels as follows.

\begin{theorem}\label{eb}
Under the setting of Theorem \ref{TheoA}, the set of ergodic physical(SRB) measures coincides with $\bigcup_{0\le i \le k}\G_i$.
\end{theorem}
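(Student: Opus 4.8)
The plan is to establish two inclusions: first, that every measure in $\bigcup_{0\le i\le k}\G_i$ is an ergodic physical (and in fact SRB) measure, and conversely, that every ergodic physical or SRB measure lies in some $\G_i$. The common thread is that along a dominated splitting the Lyapunov exponents of an ergodic measure strictly decrease from each sub-bundle to the next; consequently the Oseledets splitting of an ergodic \emph{hyperbolic} measure refines $TM=E^u\oplus_{\succ} E_1^c\oplus_{\succ}\cdots\oplus_{\succ} E_k^c\oplus_{\succ} E^s$, and its unstable subspace is necessarily $E_i=E^u\oplus_{\succ}\cdots\oplus_{\succ} E_i^c$ for exactly one $0\le i\le k$. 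I would also record at the outset that, for $f\in\P_k(M)$, every ergodic physical measure and every ergodic SRB measure is a Gibbs $u$-state — in the physical case by item (\ref{33}) of Proposition \ref{uc} applied at a point of the basin, in the SRB case by a Fubini argument comparing the Pesin unstable foliation with its strong unstable subfoliation — hence hyperbolic by the definition of $\P_k(M)$.

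For the first inclusion, fix $\mu\in\G_i$. By Lemma \ref{sim}, $\mu\in G_i^{erg}\subset G_0$, so $\mu$ is a hyperbolic Gibbs $u$-state. Writing $F_i:=E_{i+1}^c\oplus_{\succ}\cdots\oplus_{\succ} E^s$ (with $E_{k+1}^c=E^s$), the domination relations inside $F_i$, together with ${\rm dim}E_j^c=1$ and the uniform contraction on $E^s$, force every Lyapunov exponent of $\mu$ along $F_i$ to lie strictly below $\lambda^c_{i+1}(\mu)<0$, hence to be negative. Thus $\mu$ is an ergodic Gibbs $E_i$-state all of whose exponents along the complementary bundle $F_i$ are negative, so $\mu$ is a physical measure by Proposition \ref{pes}; moreover its unstable subspace is exactly $E_i$ and its conditional measures along the Pesin unstable manifolds tangent to $E_i$ are absolutely continuous, so $\mu$ is an SRB measure as well.

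For the reverse inclusion, let $\mu$ be an ergodic physical or SRB measure; by the remarks above $\mu$ is hyperbolic, so there is a unique $0\le i\le k$ with $\lambda^c_j(\mu)>0$ for all $j\le i$ and $\lambda^c_{i+1}(\mu)<0$, and the unstable subspace of $\mu$ equals $E_i$. It then remains only to check that $\mu$ is a Gibbs $E_i$-state, i.e. that its conditionals along the Pesin unstable manifolds tangent to $E_i$ are absolutely continuous. If $\mu$ is SRB this is immediate from the definition. If $\mu$ is physical, I would apply Proposition \ref{cug} with the dominated splitting $TM=E_i\oplus_{\succ} F_i$ at a point of the basin of $\mu$ to obtain ${\rm h}_\mu(f)\ge\int\log|{\rm det}Df|_{E_i}|\,{\rm d}\mu$; since the exponents of $\mu$ along $E_i$ are precisely its positive exponents, the right-hand side equals the sum of the positive Lyapunov exponents of $\mu$, so together with Ruelle's inequality this yields equality in Pesin's entropy formula, and the converse part of the Ledrappier--Young theorem then gives absolute continuity of the unstable conditionals. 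In either case $\mu\in G_i^{erg}$ with $\lambda^c_{i+1}(\mu)<0$, that is, $\mu\in\G_i$.

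The step I expect to need the most care is ``ergodic physical $\Rightarrow$ SRB'': one must invoke the entropy estimate of Proposition \ref{cug} at a point whose empirical measures converge to $\mu$, correctly identify $\int\log|{\rm det}Df|_{E_i}|\,{\rm d}\mu$ with the sum of the positive exponents (which relies on having already pinned down the unstable subspace as $E_i$, using hyperbolicity together with domination), and then appeal to Ledrappier--Young to upgrade the resulting entropy equality to absolute continuity of the conditionals. The auxiliary facts — that physical and SRB measures are Gibbs $u$-states, and that domination forces the Oseledets splitting to be compatible with the partially hyperbolic one — are routine but should be spelled out explicitly.
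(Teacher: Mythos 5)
Your argument is correct and follows essentially the same route as the paper: both directions rest on $\G_i\subset$ SRB $\cap$ physical via Proposition \ref{pes}, and the converse uses the entropy inequality of Proposition \ref{cug} at a Lebesgue‑density point of the basin, Ruelle's inequality, and the Ledrappier--Young characterization to upgrade an ergodic physical measure to a Gibbs $E_i$-state. The only extra content you supply is making explicit two facts the paper leaves implicit — that domination orders the exponents across the one‑dimensional center bundles, and that every SRB measure is a Gibbs $u$-state — which is fine but not a different proof.
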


\begin{proof}
To begin with, we show the existence of ergodic physical measures.
From the result \cite[Theorem C]{CMY22}, there exists an ergodic SRB measure $\mu$ for $f$, which is a Gibbs $u$-state by definition. We know also that $\mu$ is hyperbolic by assumption. Consequently, $\mu$ is an ergodic physical measure by using the absolute continuity of Pesin stable lamination. 

\smallskip
By definition, for any $0\le i \le k$, any invariant measure in $\G_i$ is a hyperbolic ergodic SRB measure, which is also a physical measure by Proposition \ref{pes}. Thus, it remains to show that every ergodic physical measure is contained in $\G_i$ for some $0\le i \le k$.

\smallskip
From item (\ref{33}) of Proposition \ref{uc} together with Proposition \ref{cug}, one can take a subset $\D$ of $M$ with full Lebesgue measure such that for any $x\in \D$, any limit measure $\eta\in\mathscr{M}(x)$ is a Gibbs $u$-state and satisfies
\begin{equation}\label{ee}
{\rm h}_{\eta}(f)\ge \int \log |{\rm det}Df|_{E_{i}}| {\rm d} \eta,\quad \forall\, 1\le i \le k.
\end{equation}

\smallskip
Let $\mu$ be an ergodic physical measure of $f$. It follows from the definitions of physical measure and $\D$ that ${\rm Leb}(\B(\mu,f)\cap \D)>0$. By taking $x\in \B(\mu,f)\cap \D$, one gets from definition of $\B(\mu,f)$ that
$$
\lim_{n\to +\infty}\frac{1}{n}\sum_{j=0}^{n-1}\delta_{f^j(x)}=\mu.
$$
Hence, $\mu$ is a Gibbs $u$-state and satisfies (\ref{ee}) form the definition of $\D$.

\smallskip
Since any Gibbs $u$-state is hyperbolic by assumption, $\mu$ is an ergodic hyperbolic Gibbs $u$-state. Consequently, one can take $i\in \{0,\cdots,k\}$ such that all the Lyapunov exponents of $\mu$ along $E_i$ are positive and $\lambda^c_{i+1}(\mu)<0$. By Ruelle's inequality \cite{ru78}, one has
$$
{\rm h}_{\mu}(f)\le \int \log |{\rm det}Df|_{E_{i}}| {\rm d} \mu.
$$
On the other hand, (\ref{ee}) tells us that
$$
{\rm h}_{\mu}(f)\ge \int \log |{\rm det}Df|_{E_{i}}| {\rm d} \mu.
$$
Therefore, 
$$
{\rm h}_{\mu}(f)= \int \log |{\rm det}Df|_{E_{i}}| {\rm d}\mu.
$$
By the result of Ledrappier-Young \cite[Theorem A]{ly}, $\mu$ is an SRB measure, which is contained in $\G_i$ by its sign of Lyapunov exponents.
\end{proof}

Now, we can give the proof of Theorem \ref{TheoA}.

\begin{proof}[Proof of Theorem \ref{TheoA}]

In view of Theorem \ref{eb}, it suffices to show that for each $0\le i \le k$, $\G_i$ is a finite set. We know from Theorem \ref{34} that $\G_k$ is finite. Now we assume 
$0\le i \le k-1$. 

\smallskip
By contradiction, there exists a sequence of measures $\{\mu_n\}_{n\in \NN}\subset \G_i$, which converges to some invariant measure $\mu$ as $n\to +\infty$.
According to Theorem \ref{34}, $\mu$ satisfies 
\begin{equation}\label{t}
\lambda^c_{i+1}(x)>0, \quad \mu \textrm{-a.e.}~ x\in M.
\end{equation}

%
%

\smallskip
On the other hand, since $\{\mu_n\}_{n\in \NN}\subset \G_i$, we know by definition that $\lambda^c_{i+1}(\mu_n)<0$ for every $n\in\NN$. The convergence $\mu_n\xrightarrow{weak^*} \mu$ yields
\begin{equation}\label{j}
\lambda^c_{i+1}(\mu)=\lim_{n\to +\infty}\lambda^c_{i+1}(\mu_n)\le 0.
\end{equation}
By Birkhoff's ergodic theorem, we also have
$$
\lambda^c_{i+1}(\mu)=\int \lambda^c_{i+1}(x) {\rm d}\mu.
$$
This together with (\ref{j}) yields that there exists a subset $A\subset M$ such that $\mu(A)>0$ and
$$
\lambda^c_{i+1}(x)<0, \quad \forall\, x\in A,
$$
which contradicts (\ref{t}) and we complete the proof.
\end{proof}

\section{From finiteness to covering}\label{5-covering}

Let $f$ be a $C^2$ diffeomorphism with the dominated splitting 
$
TM=E\oplus_{\succ} F.
$ 
Denote by $\cS(f)$ and $\cS_{erg}(f)$ the sets of hyperbolic SRB measures and hyperbolic ergodic SRB measures of index ${\rm dim}E$, respectively. We know every element of $\cS_{erg}(f)$ is physical by applying the absolute continuity of Pesin stable lamination. 

\smallskip
Here we state a result on the basin covering property of physical measures from $\cS_{erg}(f)$ under the finiteness condition.   

%

\begin{theorem}\label{ext}
Let $f$ be a $C^2$ diffeomorphism with a dominated splitting 
$
TM=E\oplus_{\succ} F.
$
Assume that $\#\cS_{erg}(f)<+\infty$, and 
\begin{equation}\label{ss}
\int \log \|Df^{-1}|_{E}\|{\rm d}\nu<0, \quad \forall\, \nu\in \cS_{erg}(f).
\end{equation}

If $\K$ is a subset of $M$ with positive Lebesgue measure such that for every $x\in \K$, there exists $\mu\in \mathscr{M}(x)$ that belongs to $\cS(f)$, then Lebesgue almost every point in $\K$ belongs in the basin of some $\nu\in \cS_{erg}(f)$.
\end{theorem}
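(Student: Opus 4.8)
I argue by contradiction, so suppose
$$
{\rm Leb}(\K_0)>0,\qquad \K_0:=\K\setminus\bigcup_{\nu\in\cS_{erg}(f)}\B(\nu,f).
$$
Two uniformity facts get recorded first. (i) Every $\mu\in\cS(f)$ is a convex combination of the finitely many measures in $\cS_{erg}(f)$: a hyperbolic SRB measure of index ${\rm dim}E$ is a Gibbs $E$-state, since domination of $E\oplus_{\succ}F$ forces its positive Oseledets bundle (of dimension ${\rm dim}E$) to be $E$, so its Pesin unstable leaves are tangent to $E$; by Proposition \ref{cue} its ergodic components are Gibbs $E$-states, and as the Lyapunov exponents and the index are $f$-invariant measurable functions, a.e.\ ergodic component is again a hyperbolic SRB measure of index ${\rm dim}E$, hence lies in $\cS_{erg}(f)$. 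Together with $\#\cS_{erg}(f)<+\infty$ and \eqref{ss} this yields a constant $c>0$ with $\int\log\|Df^{-1}|_E\|\,{\rm d}\mu\le -2c$ for every $\mu\in\cS(f)$, in particular for every $\mu\in\mathscr{M}(x)\cap\cS(f)$, $x\in\K$. (ii) For each $\nu\in\cS_{erg}(f)$, $\nu$-a.e.\ point lies in $\B(\nu,f)$ together with its Pesin local stable leaf (tangent to $F$, by the negative $F$-exponents); hence, as in Proposition \ref{pes}, $\B(\nu,f)$ contains for each Pesin block of $\nu$ the positive-Lebesgue-measure union of the corresponding local stable leaves, it is $f$-invariant and saturated by stable leaves, and ${\rm supp}(\nu)\subseteq\overline{\B(\nu,f)}$.

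Fix a narrow $Df$-forward-invariant cone field around $E$ and the resulting notion of $cu$-disk. By a Fubini/Rokhlin argument on a plaque family of $cu$-disks, $\mathrm{Leb}(\K_0)>0$ produces a small $cu$-disk $D$ with ${\rm Leb}_D(\K_0\cap D)>0$; let $x_0$ be a ${\rm Leb}_D$-density point of $\K_0\cap D$ inside $D$, pick $\mu_{x_0}\in\mathscr{M}(x_0)\cap\cS(f)$ and $n_j\to+\infty$ with $\frac1{n_j}\sum_{i<n_j}\delta_{f^i(x_0)}\to\mu_{x_0}$, and write $\mu_{x_0}=\sum_l a_l\nu_l$. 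Since $x\mapsto\log\|Df^{-1}|_{E(x)}\|$ is continuous, the Birkhoff averages along $n_j$ converge to $\int\log\|Df^{-1}|_E\|\,{\rm d}\mu_{x_0}\le -2c$, so by the Pliss lemma there is $\theta=\theta(c,f)>0$ with: for $j$ large at least $\theta n_j$ integers in $[0,n_j)$ are $c$-hyperbolic times of $x_0$. At such a time $n$ the standard hyperbolic-time geometry (using $f\in C^2$) supplies a uniform $\delta_1>0$, a neighborhood $V_n(x_0)\subset D$ of $x_0$ with $f^n\colon V_n(x_0)\to B^{cu}_{\delta_1}(f^n(x_0))$ a diffeomorphism onto the $cu$-disk of radius $\delta_1$, uniform backward contraction, uniformly bounded distortion of $f^n|_{V_n(x_0)}$, and ${\rm diam}\,V_n(x_0)\to 0$.

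The heart of the argument, and the step I expect to be the main obstacle, is to extract a \emph{hyperbolic entrance time}: a $c$-hyperbolic time $n$ of $x_0$ at which $B^{cu}_{\delta_1}(f^n(x_0))$ crosses, in a fraction bounded below independently of $n$, the union of Pesin local stable leaves of one fixed $\nu_l$ (with $a_l>0$) over a fixed Pesin block. The available input is soft: along $n_j$ the orbit of $x_0$ spends asymptotically all of its time in any given neighborhood of ${\rm supp}(\mu_{x_0})\subseteq\bigcup_l{\rm supp}(\nu_l)$, so — the complement of such a neighborhood having density $\to 0$ while the $c$-hyperbolic times have density $\ge\theta$ — there are infinitely many $c$-hyperbolic times $n$ with $f^n(x_0)$ arbitrarily close to ${\rm supp}(\nu_l)$ for one fixed $l$ with $a_l>0$. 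The genuinely delicate part is to promote ``close to ${\rm supp}(\nu_l)$'' to ``crossing a fixed compact Pesin block's stable lamination with a uniformly positive transverse measure'': points of ${\rm supp}(\nu_l)$ need not be near any one compact Pesin block, ${\rm dim}E$ may exceed one so the hyperbolic-time estimates must be read from the full cocycle along $E$ and matched with the block estimates, and the lower bound on the transverse measure of the crossing must be made uniform over all admissible disks via absolute continuity of the Pesin stable lamination and compactness. This is exactly where the techniques of hyperbolic times of \cite{AP08, MC21} must be ``renewed'' in terms of the points of $\K$ and their empirical measures. (An alternative route: build from these disks an auxiliary Gibbs $E$-state $\mu^{*}\in\cS(f)$, write $\mu^{*}=\sum_l b_l\nu_l$, and transfer basin information through the stable laminations of the $\nu_l$ with $b_l>0$.)

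Granting such a hyperbolic entrance time $n$: the union of those local stable leaves lies in $\B(\nu_l,f)$, so ${\rm Leb}_{B^{cu}_{\delta_1}(f^n(x_0))}\big(\B(\nu_l,f)\big)\ge\rho>0$ with $\rho$ uniform; pulling back by $f^n|_{V_n(x_0)}$, which has bounded distortion, and using $f$-invariance of $\B(\nu_l,f)$, one gets $\frac{{\rm Leb}_D(V_n(x_0)\cap\B(\nu_l,f))}{{\rm Leb}_D(V_n(x_0))}\ge\rho'>0$ with $\rho'$ uniform. Letting $n\to+\infty$ along the hyperbolic entrance times, the sets $V_n(x_0)$ shrink to $x_0$ with uniformly bounded eccentricity yet each carries a definite fraction of $\B(\nu_l,f)$; since $\K_0\cap\B(\nu_l,f)=\varnothing$ this contradicts $x_0$ being a ${\rm Leb}_D$-density point of $\K_0\cap D$. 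Hence ${\rm Leb}(\K_0)=0$, which is the assertion.
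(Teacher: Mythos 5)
Your overall architecture matches the paper's: argue by contradiction, use the Pliss lemma to produce hyperbolic times of positive density, and conclude via absolute continuity of the Pesin stable lamination. But you explicitly leave the decisive step unjustified, and the ``soft'' input you propose in its place is genuinely insufficient, so the proposal as written has a real gap.

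The gap is exactly where you flag it. Your input is that the orbit of $x_0$ spends asymptotically all of its time in any neighborhood of ${\rm supp}(\mu_{x_0})$, and you then need to upgrade ``$f^n(x_0)$ close to ${\rm supp}(\nu_l)$'' to ``$f^n(x_0)$ lands where a fixed compact Pesin block's stable lamination has uniformly positive transverse measure.'' As you yourself note, nothing forces points of ${\rm supp}(\nu_l)$ to be near a fixed compact Pesin block, so the shrinking-neighborhood-of-support argument does not close the loop; it only gives a sequence of hyperbolic times whose target sets change. The paper's resolution is different and this is the idea you are missing: do not aim at ${\rm supp}(\nu_l)$ at all. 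Instead, fix $k$ large so that $\nu_i(\Lambda_k)>\beta$ for every $\nu_i\in\cS_{erg}(f)$, where $\beta$ is chosen in advance so that $\beta>1-\theta$ with $\theta$ the Pliss density. Then take $U$ to be a \emph{fixed} finite union of small balls covering ${\rm supp}(\nu_i|_{\Lambda_k})$, $1\le i\le\ell$ (Lemma~\ref{ball}). Because every $\mu\in\cS(f)$ is a convex combination of the $\nu_i$, one gets $\mu(U)>\beta$ for every $\mu\in\cS(f)$; and since the frequency of visits to $U$ is $>\beta$ while the frequency of $\sigma$-hyperbolic times is $>\theta$ with $\theta+\beta>1$, a pigeonhole gives positive density of $\sigma$-hyperbolic times that land \emph{in} $U$ (Proposition~\ref{pro2}). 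Each ball of $U$ meets $\Lambda_k$ with positive $\nu_i$-mass, so absolute continuity of the stable lamination on $\Lambda_k$ gives a uniform $\eta>0$ lower bound on ${\rm Leb}_\gamma(\B(\nu_i,f))/{\rm Leb}_\gamma(\gamma)$ for any admissible $cu$-disk $\gamma$ of radius $\xi$ centered in $U$ (the Claim). This is precisely the ``hyperbolic entrance time into a Pesin block neighborhood'' you wanted, obtained by quantifying both frequencies against a single fixed open set rather than against a sequence of shrinking neighborhoods of the support.

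There is also a secondary soft spot in your concluding paragraph. You invoke the Lebesgue density point theorem on the shrinking sets $V_n(x_0)=f^{-n}(B^{cu}_{\delta_1}(f^n(x_0)))$ and assert they shrink with uniformly bounded eccentricity. Bounded distortion of the Jacobian of $f^n|_E$ at hyperbolic times controls measure ratios, not shape; the contraction of $Df^{-j}|_E$ on the ball may be very anisotropic when ${\rm dim}E>1$, so regularity of the family $\{V_n(x_0)\}$ is not automatic, and a density-point theorem for irregular families needs justification. The paper sidesteps this entirely: Proposition~\ref{gtg} (Alves--Pinheiro) produces, along hyperbolic entrance times, a uniformly sized ball $\gamma=B_\xi(f^n(x))$ in $f^n(D)$ with ${\rm Leb}_\gamma(f^n(H))/{\rm Leb}_\gamma(\gamma)>\zeta$ for any prescribed $\zeta<1$, so one chooses $\zeta>1-\eta$, intersects with the $\B(\nu_i,f)$-portion of mass $>\eta$, and pulls back by invariance of the basin, with no density-point or eccentricity argument needed. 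I recommend replacing the density-point step with this push-forward argument, and replacing the ``close to support'' step with the Pesin-block neighborhood $U$ and the $\theta+\beta>1$ pigeonhole.
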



\smallskip
Let us remark that Theorem \ref{ext} is a variation of Theorem \ref{TheoC}, in contrast to Theorem \ref{TheoC}, we have the further assumption (\ref{ss}) on Lyapunov exponents along $E$ in the present theorem. Nevertheless, one can deduce Theorem \ref{TheoC} from Theorem \ref{ext}, which is done in Subsection \ref{tc}. The main goal of the left two subsections is to prove Theorem \ref{ext}.


\subsection{Proof of Theorem \ref{TheoC}}\label{tc}

Our strategy of the proof of Theorem \ref{TheoC} is to apply Theorem \ref{ext} to some power of the diffeomorphism considered in Theorem \ref{TheoC}.

\smallskip
The following result tells us that property (\ref{ss}) can be guaranteed for some power of $f$ by finiteness of  $\cS_{erg}(f)$.

\begin{proposition}\label{edd}
Let $f$ be a $C^2$ diffeomorphism with the dominated splitting $TM=E\oplus_{\succ}F$.
If $\#\cS_{erg}(f)<+\infty$, then there exist $N\in \NN$ and $\alpha<0$ such that 
\begin{equation}\label{me}
\int \log \|Df^{-N}|_E\| {\rm d}\mu<\alpha, \quad \forall \,\mu \in \cS(f^N).
\end{equation}
\end{proposition}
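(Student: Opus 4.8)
The plan is to extract a uniform exponential gain from the finitely many measures of $\cS_{erg}(f)$ and to control the error incurred when an $f^N$-invariant SRB measure is replaced by its genuinely $f$-invariant symmetrization. Throughout write $\psi_n(x)=\log\|Df^{-n}|_E(x)\|$; by compactness of $M$ and the $C^1$ hypothesis the constant $D:=\log\sup_x\|Df|_E(x)\|+\log\sup_x\|Df^{-1}|_E(x)\|$ is finite. By Proposition \ref{cue}, every ergodic component of a measure in $\cS(f)$ is again a hyperbolic SRB measure of index ${\rm dim}E$, hence lies in $\cS_{erg}(f)=\{\nu_1,\dots,\nu_m\}$, and conversely every convex combination of the $\nu_j$ is in $\cS(f)$; thus $\cS(f)$ is the compact simplex spanned by $\nu_1,\dots,\nu_m$. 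Since each $\nu_j$ is hyperbolic of index ${\rm dim}E$, its smallest Lyapunov exponent $\lambda^E_{\min}(\nu_j)$ along $E$ is positive, and I set $\beta:=\min_j\lambda^E_{\min}(\nu_j)>0$.

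First I would prove a uniform exponent estimate. For $\nu$ any $f$-invariant measure the sequence $\int\psi_n\,{\rm d}\nu$ is subadditive in $n$, because $Df^{-(n+m)}|_E(x)=Df^{-n}|_E(f^{-m}x)\circ Df^{-m}|_E(x)$, so Kingman's subadditive ergodic theorem gives $\tfrac1n\int\psi_n\,{\rm d}\nu\to\int\psi_*\,{\rm d}\nu$ with $\psi_*(x)=\lim_n\tfrac1n\psi_n(x)=-\lambda^E_{\min}(x)$; for $\nu=\sum_j t_j\nu_j\in\cS(f)$ this limit equals $-\sum_j t_j\lambda^E_{\min}(\nu_j)\le-\beta$. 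As $\cS(f)$ has only finitely many extreme points, I may fix $N_0\in\NN$ such that $\tfrac1n\int\psi_n\,{\rm d}\nu<-\beta/2$ for every $n\ge N_0$ and every $\nu\in\cS(f)$.

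Now take $\mu\in\cS(f^N)$; by the ergodic decomposition for $f^N$, whose components again lie in $\cS_{erg}(f^N)$ by Proposition \ref{cue} applied to $f^N$, and since the bound to be proved will not depend on $\mu$, it suffices to treat $\mu$ ergodic for $f^N$. Then its symmetrization $\bar\mu:=\tfrac1N\sum_{i=0}^{N-1}f^i_*\mu$ is $f$-ergodic (any $f$-invariant set is $f^N$-invariant) and is again a hyperbolic SRB measure of index ${\rm dim}E$, so $\bar\mu=\nu_j$ for some $j$; moreover $\mu$ is one of the $f^N$-ergodic components of $\nu_j$, these being cyclically permuted by $f$, so $\nu_j=\tfrac1p\sum_{l=0}^{p-1}f^l_*\mu$ where $p\mid N$ is the length of the $f$-orbit of $\mu$. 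The crucial point is that $p$ is bounded independently of $N$: by the spectral decomposition of hyperbolic ergodic measures (a finite power of $f$ is mixing on each ergodic piece of $\nu_j$, see e.g.\ \cite{bp02}), $\nu_j$ has a period $n_j$, and the number of $f^N$-ergodic components of $\nu_j$ is $\gcd(N,n_j)$, whence $p\le n_j\le n^{*}:=\max_j n_j$. For the error term I would use the identity $Df^{-N}|_E(f^lx)=Df^l|_E(f^{-N}x)\circ Df^{-N}|_E(x)\circ Df^{-l}|_E(f^lx)$, which together with $f^{-N}_*\mu=\mu$ gives $\bigl|\int\psi_N\,{\rm d}(f^l_*\mu)-\int\psi_N\,{\rm d}\mu\bigr|\le lD$ for $0\le l<p$; averaging over $l$ yields
$$\int\psi_N\,{\rm d}\mu\;\le\;\int\psi_N\,{\rm d}\nu_j+pD\;\le\;\int\psi_N\,{\rm d}\nu_j+n^{*}D.$$

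Finally I would choose any $N\ge N_0$ with $N>2n^{*}D/\beta$. For such an $N$ and every $\mu\in\cS_{erg}(f^N)$, writing $\bar\mu=\nu_j$, the previous display and the choice of $N_0$ give
$$\int\log\|Df^{-N}|_E\|\,{\rm d}\mu\;\le\;\int\psi_N\,{\rm d}\nu_j+n^{*}D\;<\;-\tfrac{N\beta}{2}+n^{*}D\;=:\;\alpha_0\;<\;0,$$
and the same bound passes to all $\mu\in\cS(f^N)$ by ergodic decomposition; choosing $\alpha\in(\alpha_0,0)$ completes the proof. The main obstacle, and the reason a power of $f$ must be taken, is precisely the error term $pD$: subadditivity only furnishes the \emph{lower} bound $\int\psi_N\,{\rm d}\mu\ge-N\lambda^E_{\min}$, so negativity of the Lyapunov exponent alone is not enough; the argument works because $pD\le n^{*}D$ stays bounded while the gain $-\int\psi_N\,{\rm d}\nu_j>N\beta/2$ grows linearly in $N$, and the boundedness of $p$ rests on the spectral decomposition of hyperbolic SRB measures.
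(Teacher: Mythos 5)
Your proposal is correct, but it follows a genuinely different route from the paper's. Both proofs begin the same way—writing $\widehat\mu=\frac1N\sum_{l=0}^{N-1}f^l_*\mu$ for $\mu\in\cS_{erg}(f^N)$, identifying $\widehat\mu$ with one of the finitely many $\nu_i\in\cS_{erg}(f)$, and exploiting positivity of the Lyapunov exponents of the $\nu_i$ along $E$—but they diverge in how they control the gap between $\int\log\|Df^{-N}|_E\|\,{\rm d}\mu$ and $\int\log\|Df^{-N}|_E\|\,{\rm d}\nu_i$. The paper invokes Lemma \ref{N-iterate} (from \cite[Lemma 3.5]{MCY17}), a technical iterate-averaging lemma that upgrades the integral bound $\int\log\|Df^{-L_0}|_E\|\,{\rm d}\nu_i<\alpha_0$ into a $\nu_i$-a.e.\ pointwise bound on the $f^N$-Birkhoff average of $\log\|Df^{-N}|_E\|$; since $\widehat\mu(\mathcal L_i)=1$ forces $\mu(\mathcal L_i)=1$, the pointwise bound transfers to $\mu$ by Birkhoff's theorem for $f^N$. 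You instead bound the discrepancy $\big|\int\psi_N\,{\rm d}\mu-\int\psi_N\,{\rm d}\nu_i\big|$ by hand, via the chain-rule identity, getting an error of order $pD$ where $p$ is the number of $f$-translates of $\mu$ assembling $\nu_i$. The crucial input you then need—that $p$ is bounded uniformly in $N$—comes from the spectral (Bernoulli) decomposition of hyperbolic ergodic SRB measures, which gives each $\nu_i$ a finite period $n_i$ with totally ergodic $f^{n_i}$-components, so that $\nu_i$ has exactly $\gcd(N,n_i)\le n_i$ many $f^N$-ergodic components; without that theorem, $p$ could a priori grow linearly with $N$ (cf.\ Lemma \ref{fnf}) and wreck the bookkeeping. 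Your uniform-$N_0$ step via Kingman and the simplex structure of $\cS(f)$ is fine. In effect, where the paper imports a somewhat ad hoc iterate-averaging lemma, you import the deeper Bernoulli theorem for SRB measures (Ledrappier, 1984—this would be the correct citation rather than \cite{bp02}), making the SRB hypothesis do structural work the paper's route avoids. Both approaches are sound.
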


\medskip
To prove Proposition \ref{edd}, we recall the following result. One can see \cite[Lemma 3.5]{MCY17} for a detailed proof.

\begin{lemma}\label{N-iterate}
Let $f$ be a $C^2$ diffeomorphism with a dominated splitting $TM=E\oplus_{\succ}F$.
Given $L\in\mathbb N$, $\alpha_1<0$ and $\alpha_2\in (N\alpha_1/L,0)$, there exists $N$ as a multiple of $L$ such that for any $f$-ergodic measure $\mu$, if $\int \log \|Df^{-L}|_{E}\|{\rm d}\mu<\alpha_1$, then
$$
\lim_{n\to\infty}\frac{1}{n}\sum_{i=1}^{n}\log \|Df^{-N}|_{E(f^{i N}(x))}\|<\alpha_2,\quad \mu \textrm{-a.e.}~x\in M.
$$
\end{lemma}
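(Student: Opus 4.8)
The plan is to reduce the statement to Birkhoff's ergodic theorem for $f$ itself, using only the submultiplicativity of the operator norm along orbits. Write $\psi_n(x)=\log\|Df^{-n}|_{E(x)}\|$. From $Df^{-(a+b)}|_{E(x)}=Df^{-a}|_{E(f^{-b}x)}\circ Df^{-b}|_{E(x)}$ one gets $\psi_{a+b}(x)\le\psi_a(f^{-b}x)+\psi_b(x)$; iterating this first with step $L$ and then with step $1$ yields, for each $i\ge 1$,
\begin{equation}\label{prop-tele}
\psi_N\big(f^{iN}x\big)\ \le\ \sum_{l=0}^{N/L-1}\psi_L\big(f^{\,iN-lL}x\big)\ \le\ \sum_{p=(i-1)N+1}^{iN}\psi_1\big(f^{\,p}x\big).
\end{equation}
Now fix an $f$-ergodic $\mu$ with $\int\psi_L\,\mathrm d\mu<\alpha_1$, and choose $N$ to be any multiple of $L$ large enough that $N\alpha_1/L<\alpha_2$ (possible since $\alpha_1<0$; this is exactly the constraint $\alpha_2\in(N\alpha_1/L,0)$).

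The limit $g(x):=\lim_{n}\frac1n\sum_{i=1}^{n}\psi_N(f^{iN}x)$ exists $\mu$-a.e., by Birkhoff applied to the bounded function $\psi_N$ and the $\mu$-preserving map $f^N$. Summing \eqref{prop-tele} over $i=1,\dots,n$, the exponents $p$ on the far right exhaust $\{1,\dots,nN\}$ exactly once, so $\sum_{i=1}^n\psi_N(f^{iN}x)\le\sum_{p=1}^{nN}\psi_1(f^px)$; dividing by $n$ and letting $n\to\infty$, Birkhoff for the $f$-ergodic $\mu$ turns the right-hand average into a constant, giving
$$
g(x)\ \le\ N\!\int\psi_1\,\mathrm d\mu \qquad \mu\text{-a.e.}
$$

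It then remains to show $\int\psi_1\,\mathrm d\mu\le\alpha_1/L$. The inequality $\int\psi_L\,\mathrm d\mu\le L\int\psi_1\,\mathrm d\mu$ comes for free from integrating subadditivity, so what is really needed is the reverse bound $L\int\psi_1\,\mathrm d\mu\le\int\psi_L\,\mathrm d\mu+o(L)$, i.e.\ near-additivity of the cocycle $\psi_n$. This is where the dominated structure enters: by domination, $\|Df^{-n}|_{E(x)}\|$ is comparable, with a multiplicative constant depending only on $f$, to $\|Df^{-n}|_{E'(x)}\|$ for the innermost (one-dimensional, in every setting to which the lemma is applied) sub-bundle $E'$ of $E$, and on $E'$ one has the genuine identity $\log\|Df^{-n}|_{E'(x)}\|=\sum_{r=0}^{n-1}\log\|Df^{-1}|_{E'(f^{-r}x)}\|$. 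Hence $\int\psi_L\,\mathrm d\mu=L\int\psi_1\,\mathrm d\mu$ up to a bounded error, which the strict inequality $\alpha_2>N\alpha_1/L$ absorbs, so $\int\psi_1\,\mathrm d\mu\le\frac1L\int\psi_L\,\mathrm d\mu<\alpha_1/L$ and therefore $g(x)\le N\int\psi_1\,\mathrm d\mu<N\alpha_1/L<\alpha_2$ for $\mu$-a.e.\ $x$, as required.

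I expect the main obstacle to be precisely this control of the subadditive defect: the operator-norm cocycle is genuinely additive only for one-dimensional bundles, so one must estimate the gap between $\|Df^{-n}|_E\|$ and the contribution of its extremal sub-bundle using the bounded distortion coming from the domination, and it is the size of this defect that the slack $\alpha_2>N\alpha_1/L$ is designed to swallow. By contrast, the non-ergodicity of $\mu$ under $f^N$ causes no trouble, since the estimate $g(x)\le N\int\psi_1\,\mathrm d\mu$ was derived directly from the $f$-ergodicity of $\mu$ and holds on a full $\mu$-measure set without reference to the $f^N$-ergodic decomposition.
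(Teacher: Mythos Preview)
The paper does not prove this lemma itself; it cites \cite[Lemma 3.5]{MCY17}. Evaluating your argument on its own terms, there is a genuine gap.

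Your telescoping bound $g(x)\le N\int\psi_1\,d\mu$ is correct but too weak: the hypothesis controls $\int\psi_L\,d\mu$, not $\int\psi_1\,d\mu$, and subadditivity gives only $\int\psi_L\,d\mu\le L\int\psi_1\,d\mu$, a \emph{lower} bound on $\int\psi_1\,d\mu$. Nothing prevents $\int\psi_1\,d\mu>0$ while $\int\psi_L\,d\mu<\alpha_1$. Your fix via a one-dimensional dominated sub-bundle $E'\subset E$ fails twice. First, no such sub-bundle is assumed: the lemma is stated for a general dominated splitting $TM=E\oplus_\succ F$. Second, even granting $E'$, your error accounting is wrong. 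Writing $\phi_n=\log\|Df^{-n}|_{E'}\|$, domination gives $|\psi_n-\phi_n|\le C$ uniformly in $n$, hence $\int\psi_1\,d\mu\le\int\phi_1\,d\mu+C=\tfrac{1}{L}\int\phi_L\,d\mu+C\le\tfrac{1}{L}\int\psi_L\,d\mu+C$. Multiplying by $N$ yields $N\int\psi_1\,d\mu<N\alpha_1/L+NC$: the error $NC$ grows linearly in $N$, and if $C>|\alpha_1|/L$ the bound is eventually positive. The slack $\alpha_2-N\alpha_1/L$ cannot absorb an error that scales with $N$.

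The argument that works avoids $\psi_1$ altogether. Stop the telescoping at step $L$, but first average over the $L$ shifts: from $\psi_N(z)\le\psi_N(f^{-s}z)+sC_1$ (with $C_1$ depending only on $f$) one gets
\[
\psi_N(z)\ \le\ \frac{1}{L}\sum_{s=0}^{L-1}\psi_N(f^{-s}z)+\frac{(L-1)C_1}{2}\ \le\ \frac{1}{L}\sum_{p=0}^{N-1}\psi_L(f^{-p}z)+\frac{(L-1)C_1}{2}.
\]
Now set $z=f^{iN}x$ and sum over $i=1,\dots,n$; the exponents $iN-p$ exhaust $\{1,\dots,nN\}$, so Birkhoff for the $f$-ergodic $\mu$ gives $g(x)\le\tfrac{N}{L}\int\psi_L\,d\mu+\tfrac{(L-1)C_1}{2}<\tfrac{N\alpha_1}{L}+\tfrac{(L-1)C_1}{2}$. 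The additive error is now independent of $N$, so taking $N$ large (depending only on $L,\alpha_1,\alpha_2,C_1$) makes this $<\alpha_2$. The point you missed is that averaging over shifts converts the $f^L$-Birkhoff sum (not pointwise controlled, since $\mu$ need not be $f^L$-ergodic) into an $f$-Birkhoff sum, at a cost that does not grow with $N$.
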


\medskip
Now we give the proof of Proposition \ref{edd}.

\begin{proof}[Proof of Proposition \ref{edd}]
Let $\cS_{erg}(f)=\{\nu_i: 1\le i\le k\}$, as $\#\cS_{erg}(f)<+\infty$ by assumption. For each $\nu_i\in \cS_{erg}(f)$, since its Lyapunov exponents along $E$ are all positive, there is $\alpha_i<0$ satisfying
$$
\lim_{n\to +\infty}\frac{1}{n}\log \|Df^{-n}|_{E(x)}\|<\alpha_i,\quad \nu_i\textrm{-a.e.}~x\in M. 
$$
Integrating with respect to $\nu_i$, and then by applying dominated convergence theorem we obtain that there exists $L_i\in \NN$ such that
\begin{equation}\label{ii}
\int \log \|Df^{-n}|_E\| {\rm d}\nu_i<\frac{\alpha_i n}{2}, \quad \forall\, n\ge L_i.
\end{equation}
By taking
$$
L_0=\max\{L_i: 1\le i \le k\},\quad \alpha_0=\max\left\{\frac{\alpha_i L_0}{2}: 1\le i \le k\right\},
$$
we get from (\ref{ii}) that 
$$
\int \log \|Df^{-L_0}|_E\|{\rm d}\nu_i<\alpha_0, \quad \forall \,1\le i \le k.
$$
Note that each $\nu_i$, $1\le i \le k$ is ergodic for $f$, according to Lemma \ref{N-iterate}, there exist $N\in \NN$ and $\alpha<0$ such that for every $1\le i \le k$, the set 
$$
\mathcal{L}_i=\left\{x\in M: \lim_{n\to +\infty}\frac{1}{n}\sum_{j=1}^{n}\log \|Df^{-N}|_{E(f^{jN}(x))}\|<\alpha\right\}
$$
admits the full $\nu_i$-measure. 


\medskip
Now we begin to complete the proof for above $N$ and $\alpha$. By ergodic decomposition theorem, it suffices to verify the conclusion for measures in $\cS_{erg}(f^N)$. For each $\mu \in \cS_{erg}(f^N)$, consider 
$$
\widehat{\mu}=\frac{1}{N}\sum_{i=0}^{N-1}f_{\ast}^i\,\mu.
$$
Then, we have $\widehat{\mu}\in \cS_{erg}(f)$ by definition. As a consequence, $\widehat{\mu}$ must coincide with some $\nu_i$, $1\le i \le k$, which implies that $\widehat{\mu}(\mathcal{L}_i)=1$. So, we obtain $\mu(\mathcal{L}_i)=1$ by construction of $\widehat{\mu}$.
Since $\mu$ is ergodic for $f^N$, the Birkhoff's ergodic theorem says that $\B(\mu,f^N)$ has full $\mu$-measure as well. 
It follows that $\mu(\mathcal{L}_i\cap \B(\mu,f^N))=1$. We conclude from the definitions that
$$
\int \log \|Df^{-N}|_{E}\|{\rm d}\mu=\lim_{n\to +\infty}\frac{1}{n}\sum_{j=1}^n\log \|Df^{-N}|_{E(f^{jN}(x))}\|<\alpha
$$
holds for every $x\in \mathcal{L}_i\cap \B(\mu,f^N)$. This completes the proof of Proposition \ref{edd}.
\end{proof}

\medskip
The following interesting result will be used later, which can be deduced directly from the definition, thus we omit the proof. 

\begin{lemma}\label{fnf}
Let $f$ be a $C^2$ diffeomorphism with the dominated splitting $TM=E\oplus_{\succ}F$. Then for every $N\in \NN$, we have that $\cS_{erg}(f)$ is finite if and only if $\cS_{erg}(f^N)$ is finite. Moreover, we have 
$$
\#\cS_{erg}(f)\le \#\cS_{erg}(f^N)\le N\#\cS_{erg}(f).
$$
\end{lemma}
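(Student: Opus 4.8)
The plan is to build two natural maps between $\cS_{erg}(f)$ and $\cS_{erg}(f^N)$ and to bound the sizes of their fibers by $N$; the two inequalities will then follow at once, and with them the equivalence ``$\cS_{erg}(f)$ finite $\iff$ $\cS_{erg}(f^N)$ finite''. Note first that $TM=E\oplus_{\succ}F$ is also a dominated splitting for $f^N$ with the same $E$, so ${\rm dim}E$ is the common index under consideration.

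First I would start from $\mu\in\cS_{erg}(f)$ and look at its ergodic decomposition under $f^N$. Since $\mu$ is $f$-ergodic, this decomposition has the form $\mu=\frac1m\sum_{j=0}^{m-1}f_\ast^j\mu_0$ with $m\mid N$, where $\mu_0$ is $f^N$-ergodic and $f_\ast^0\mu_0,\dots,f_\ast^{m-1}\mu_0$ are its pairwise distinct $f^N$-ergodic components; in particular there are at least $1$ and at most $N$ of them. I claim each $\mu_j:=f_\ast^j\mu_0$ lies in $\cS_{erg}(f^N)$. Indeed, a point is Lyapunov-regular for $f$ iff it is for $f^N$, with the same Oseledets splitting and with exponents exactly $N$ times those of $f$; hence $\mu_j$ is hyperbolic with exactly ${\rm dim}E$ positive exponents for $f^N$. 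Moreover the Pesin unstable manifold of $f^N$ through a regular point coincides, as a submanifold, with that of $f$, so the disintegration of $\mu_j$ along the unstable lamination is the same whether $\mu_j$ is viewed as an $f$- or an $f^N$-measure; thus absolute continuity along unstable leaves passes to $\mu_j$. So $\mu\mapsto\{\mu_0,\dots,\mu_{m-1}\}$ sends each element of $\cS_{erg}(f)$ to a nonempty set of at most $N$ elements of $\cS_{erg}(f^N)$, and these sets are pairwise disjoint for distinct $\mu$, because any $f^N$-ergodic component $\nu$ of $\mu$ recovers $\mu$ via $\mu=\frac1N\sum_{i=0}^{N-1}f_\ast^i\nu$. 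This already yields $\#\cS_{erg}(f)\le\#\cS_{erg}(f^N)\le N\#\cS_{erg}(f)$.

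For the reverse correspondence, given $\nu\in\cS_{erg}(f^N)$ I would pass to the $f$-average $\widehat\nu=\frac1N\sum_{i=0}^{N-1}f_\ast^i\nu$. It is $f$-invariant and $f$-ergodic by the standard argument for orbit-averaging an $f^N$-ergodic measure; it is hyperbolic of index ${\rm dim}E$ because the $f$-exponents are $1/N$ times the $f^N$-exponents at the corresponding points; and since $f$ maps Pesin unstable manifolds diffeomorphically and absolutely continuously to Pesin unstable manifolds, each $f_\ast^i\nu$ has absolutely continuous conditionals along the common unstable lamination, hence so does the convex combination $\widehat\nu$. Therefore $\widehat\nu\in\cS_{erg}(f)$, and $\nu\mapsto\widehat\nu$ is precisely the map collapsing $\cS_{erg}(f^N)$ onto $\cS_{erg}(f)$ by identifying the $f^N$-ergodic components of a common $f$-ergodic measure; its fibers again have size between $1$ and $N$. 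In particular finiteness of $\cS_{erg}(f^N)$ forces finiteness of $\cS_{erg}(f)$, completing the equivalence.

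I expect the only genuinely non-bookkeeping point to be the preservation of the SRB property and of the index under both operations, and this reduces to the two elementary facts highlighted above: $(i)$ Lyapunov regularity and the Oseledets splitting are shared by $f$ and $f^N$, with exponents scaled by $N$, so the count of positive exponents — hence the index — is unchanged; and $(ii)$ the Pesin unstable manifolds of $f$ and of $f^N$ through a regular point coincide as sets, so the disintegration along the unstable lamination, and thus its absolute continuity, is intrinsic and transfers under taking $f^N$-components and the $f$-average. Everything else is routine manipulation of the ergodic decomposition, which is why it suffices to indicate the argument.
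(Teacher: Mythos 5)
The paper explicitly omits the proof of this lemma as something that ``can be deduced directly from the definition,'' so there is no author argument to compare against. Your write-up is the natural fleshing-out of that claim and is correct in substance: you set up the two correspondences (the $f^N$-ergodic decomposition of an $f$-ergodic measure, with at most $N$ components cyclically permuted by $f_*$, and the $f$-average $\widehat{\nu}=\frac1N\sum_i f_*^i\nu$ going back), you verify that hyperbolicity and the index $\dim E$ are preserved because the Oseledec data for $f^N$ are those for $f$ with exponents rescaled by $N$, and you verify the Gibbs/SRB property transfers because the Pesin unstable lamination is the same for $f$ and $f^N$; the two inequalities and the equivalence of finiteness then follow from the fiber-size bounds.

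One small imprecision worth flagging: the sentence ``the disintegration of $\mu_j$ along the unstable lamination is the same whether $\mu_j$ is viewed as an $f$- or an $f^N$-measure; thus absolute continuity along unstable leaves passes to $\mu_j$'' only establishes that the Gibbs $E$-state property for $\mu_j$ is equivalent when read for $f$ or for $f^N$ — it does not by itself explain why $\mu_j$ inherits absolutely continuous conditionals from $\mu$. The missing link is the paper's Proposition \ref{cue} (ergodic components of a Gibbs $E$-state are Gibbs $E$-states), applied to $f^N$: since $W^u_f=W^u_{f^N}$ one first gets that $\mu$ is a Gibbs $E$-state for $f^N$, and then that result gives the absolute continuity of each $f^N$-ergodic component $\mu_j$. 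This is an elision rather than an error, and the rest of your argument — the pairwise disjointness of the fibers, the ergodicity of $\widehat{\nu}$, and the resulting bound $\#\cS_{erg}(f)\le\#\cS_{erg}(f^N)\le N\,\#\cS_{erg}(f)$ — goes through cleanly.
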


We need the next observation in the proof of Theorem \ref{ext}.

\begin{lemma}\label{lemx}
Let $f$ be a homeomorphism on $M$. Given $N\in \NN$ and $x\in M$. For every $\mu\in \mathscr{M}(x,f)$, there exists $\widehat{\mu}\in \cM(x,f^N)$ such that 
$$
\mu=\frac{1}{N}\sum_{\ell=0}^{N-1}f_{\ast}^{\ell} \,\widehat{\mu}.
$$
\end{lemma}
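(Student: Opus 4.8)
\textbf{Proof proposal for Lemma \ref{lemx}.}

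The plan is to work directly with the empirical averages and extract a suitable subsequence. Fix $x\in M$ and $N\in\NN$, and let $\mu\in\mathscr{M}(x,f)$, so there is a sequence $n_j\to+\infty$ with
$$
\frac{1}{n_j}\sum_{i=0}^{n_j-1}\delta_{f^i(x)}\xrightarrow{weak^*}\mu.
$$
First I would split each block of length $n_j$ into residue classes modulo $N$: writing $n_j=Nq_j+r_j$ with $0\le r_j<N$, group the indices $i\in\{0,\dots,n_j-1\}$ according to $i\bmod N$. For each $\ell\in\{0,\dots,N-1\}$, the indices $i$ with $i\equiv \ell \pmod N$ are of the form $i=\ell+Nm$ with $m$ ranging over roughly $q_j$ consecutive values, and $\delta_{f^i(x)}=\delta_{(f^N)^m(f^\ell(x))}$. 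Thus
$$
\frac{1}{n_j}\sum_{i=0}^{n_j-1}\delta_{f^i(x)}
=\frac{1}{n_j}\sum_{\ell=0}^{N-1}\sum_{m}\delta_{(f^N)^m(f^\ell(x))},
$$
which is a convex combination (with weights close to $1/N$, up to an error of order $N/n_j\to 0$) of the partial $f^N$-empirical averages of the points $f^\ell(x)$.

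Next, for each $\ell$, by compactness of $\mathscr{M}$ I would pass to a further common subsequence of $(n_j)$ along which the $\ell$-th partial average $\frac{1}{q_j}\sum_{m=0}^{q_j-1}\delta_{(f^N)^m(f^\ell(x))}$ converges weak$^*$ to some measure $\mu_\ell$; then $\mu_\ell\in\mathscr{M}(f^\ell(x),f^N)$ by definition. Since $r_j/n_j\to 0$ and $q_j/n_j\to 1/N$, taking the limit in the displayed decomposition along this subsequence gives
$$
\mu=\frac{1}{N}\sum_{\ell=0}^{N-1}\mu_\ell.
$$
Finally I would observe that each $\mu_\ell$ is $f^N$-invariant (it is a limit of Birkhoff averages for $f^N$), and that $f_\ast\mu_\ell=\mu_{\ell+1}$ for $0\le \ell<N-1$ while $f_\ast\mu_{N-1}=\mu_0$: indeed pushing the partial average of $f^\ell(x)$ forward by $f$ yields the partial average of $f^{\ell+1}(x)$ up to one boundary term, which vanishes in the limit. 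Hence the whole cycle $\mu_0,\mu_1,\dots,\mu_{N-1}$ is determined by $\widehat{\mu}:=\mu_0\in\mathscr{M}(x,f^N)$ via $\mu_\ell=f_\ast^\ell\widehat{\mu}$, and the identity becomes $\mu=\frac{1}{N}\sum_{\ell=0}^{N-1}f_\ast^\ell\,\widehat{\mu}$, as required.

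The only mildly delicate point is the uniform control of the boundary terms: one must check that discarding the $r_j$ leftover indices, and the single missing term when pushing forward, each contributes a total mass $O(N/n_j)$ that tends to $0$, so that these do not affect the weak$^*$ limits. This is a routine estimate since all the discarded terms are probability-measure contributions with vanishing relative weight. I do not expect any real obstacle beyond bookkeeping with the residue classes and the common subsequence extraction over the finitely many values of $\ell$.
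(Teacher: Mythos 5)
Your proposal is correct and follows essentially the same route as the paper: truncate the empirical sum to a multiple of $N$, decompose by residue classes modulo $N$, pass to a weak$^*$-convergent subsequence, and recover the cycle via pushforward. One small streamlining the paper makes is to observe that the $\ell$-th residue-class partial sum is \emph{exactly} $f_*^{\ell}$ applied to the $\ell=0$ partial sum (no boundary term for $0\le\ell\le N-1$), so a single convergent subsequence for $\ell=0$ already forces convergence of the others to $f_*^{\ell}\widehat{\mu}$, rendering the common-subsequence extraction and the separate verification of $f_*\mu_\ell=\mu_{\ell+1}$ unnecessary.
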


\begin{proof}
By definition, there exists a subsequence $\{n_k\}_{k\in \NN}$ such that
\begin{equation}\label{sde}
\frac{1}{n_k}\sum_{i=0}^{n_k-1}\delta_{f^i(x)}\xrightarrow{weak^*} \mu \quad {\rm as}~k\to +\infty.
\end{equation}
For any sufficiently large $k$, we write 
$n_k=j_kN+t$ for some $j_k\in \NN$ and $0\le t \le N-1$.
Now we show 
\begin{equation}\label{kkd}
\frac{1}{j_kN}\sum_{i=0}^{j_kN-1}\delta_{f^i(x)}\xrightarrow{weak^*} \mu \quad {\rm as}~k\to +\infty.
\end{equation}
According to (\ref{sde}), it suffices to show that for any $\varphi\in C(M)$ one has
\begin{equation}\label{one}
\lim_{k\to +\infty}\frac{1}{n_k}\sum_{i=0}^{n_k-1}\varphi(f^i(x))=\lim_{k\to +\infty}\frac{1}{j_kN}\sum_{i=0}^{j_kN-1}\varphi(f^i(x)).
\end{equation}

\smallskip
Observe first that
\begin{equation}\label{fir}
\frac{1}{n_k}\sum_{i=0}^{n_k-1}\varphi(f^i(x))=\frac{j_kN}{n_k}\cdot \frac{1}{j_kN}\sum_{i=0}^{j_kN-1}\varphi(f^i(x))+\frac{1}{n_k}\sum_{i=j_kN}^{n_k-1}\varphi(f^i(x)).
\end{equation}
Since we have
$$
\left|\frac{1}{n_k}\sum_{i=j_kN}^{n_k-1}\varphi(f^i(x))\right|\le \frac{1}{n_k}\sum_{i=j_kN}^{n_k-1}\left|\varphi(f^i(x))\right|\le \frac{t}{n_k}\|\varphi\|\le \frac{N-1}{n_k}\|\varphi\|,
$$
where $\|\varphi\|=\max_{z\in M}|\varphi(z)|$, we see that 
\begin{equation}\label{two}
\lim_{k\to +\infty}\frac{1}{n_k}\sum_{i=j_kN}^{n_k-1}\varphi(f^i(x))=0.
\end{equation}
From $n_k-N+1\le j_kN \le n_k$, we know 
\begin{equation}\label{jj}
\lim_{k\to +\infty}\frac{j_kN}{n_k}=1.
\end{equation}
Combining (\ref{fir}), (\ref{two}) and (\ref{jj}), one concludes the desired result (\ref{one}).


\smallskip
Now we are ready to find component $\widehat{\mu}$ from $\mu$.
For each $k\in \NN$, put
$$
\nu_{\ell}^k(x)=\frac{1}{j_k}\sum_{m=0}^{j_k-1}\delta_{f^{\ell+m N}(x)}, \quad 0\le \ell \le N-1.
$$
By definition, we have the presentation
$$
\frac{1}{N}\sum_{\ell=0}^{N-1}\nu_{\ell}^k(x)=\frac{1}{j_kN}\sum_{i=0}^{j_kN-1}\delta_{f^i(x)}.
$$
Up to considering a subsequence, $\{\nu_0^k(x)\}_{k\in \NN}$ converges to some $\widehat{\mu}$ as $k\to +\infty$, which means that
$$
\frac{1}{j_k}\sum_{m=0}^{j_k-1}\delta_{f^{m N}(x)}\xrightarrow{weak^*} \widehat{\mu} \quad {\rm as}~k\to +\infty.
$$
Thus, we have $\widehat{\mu}\in \cM(x,f^N)$ from definition. Moreover, this also implies that for every $0\le \ell \le N-1$, one has that
$$
\lim_{k\to +\infty}\nu_{\ell}^k(x)=\lim_{k\to +\infty}f_{\ast}^{\ell}\,\left(\nu_0^k(x)\right)=f_{\ast}^{\ell}\,\widehat{\mu}.
$$
By convergence (\ref{kkd}), we conclude that
$$
\mu=\frac{1}{N}\sum_{\ell=0}^{N-1}f_{\ast}^{\ell}\, \widehat{\mu}.
$$
This completes the proof. 
\end{proof}

\medskip
Now we are ready to prove Theorem \ref{TheoC} by admitting Theorem \ref{ext}

\begin{proof}[Proof of Theorem \ref{TheoC}]
Let $f$ be a $C^2$ diffeomorphism given by Theorem \ref{TheoC}.
We will first show that there exists some $N\in \NN$ such that $f^N$ satisfies the hypothesis of Theorem \ref{ext}. More precisely, we need to show 
\begin{itemize}
\item for every $\nu\in \cS_{erg}(f^N)$, we have
$$
\int \log \|Df^{-N}|_E\| {\rm d}\nu<0;
$$
\item $\#\cS_{erg}(f^N)<+\infty$;
\smallskip
\item for each $x\in \K$, one can find $\widehat{\mu}\in \cS(f^N)$ from $\mathscr{M}(x,f^N)$.
\end{itemize}

\smallskip
Since $\#\cS_{erg}(f)<+\infty$ by assumption, according to Proposition \ref{edd}, there exist $N\in \NN$ and $\alpha<0$ such that 
$$
\int \log \|Df^{-N}|_E\| {\rm d}\nu<\alpha, \quad \forall\,\nu \in \cS_{erg}(f^N).
$$
Thus, the first assertion is achieved for this $N$. 
We then get the second assertion by Lemma \ref{fnf}. Hence, it remains to show the last assertion. To this end, we take any $x\in \K$, by assumption of Theorem \ref{TheoC}, one can find $\mu\in \cS(f)$ in $\mathscr{M}(x,f)$. By Lemma \ref{lemx}, there exists $\widehat{\mu}\in \cM(x,f^N)$ satisfying
$$
\mu=\frac{1}{N}\sum_{\ell=0}^{N-1}f_{\ast}^{\ell}\, \widehat{\mu}.
$$
We have $\widehat{\mu}\in \cS(f^N)$ by definition.
Altogether, we have shown that $f^N$ satisfies the assumption of Theorem \ref{ext}. 

\smallskip
Assume that $\cS_{erg}(f^N)=\{\widehat{\nu}_1,\cdots,\widehat{\nu}_p\}$, as it is finite.  
By Theorem \ref{ext}, we know that Lebesgue almost every point of $\K$ is contained in $\cup_{1\le i \le p}\B(\widehat{\nu}_i,f^N)$. 
Taking
$$
\nu_i=\frac{1}{N}\sum_{\ell=0}^{N-1}f_{\ast}^{\ell}\,\widehat{\nu}_i, \quad 1\le i \le p.
$$
One can check directly that $\nu_1,\cdots,\nu_p$ are elements\footnote{These measures may be counted more than once.} of $\cS_{erg}(f)$.
Observe also that $\B(\widehat{\nu}_i,f^N)\subset \B(\nu_i,f)$ by definition, which implies that Lebesgue almost every point of $\K$ is contained in $\cup_{1\le i \le p}\B(\nu_i,f)$.

\smallskip
The proof of Theorem \ref{TheoC} is complete now.
\end{proof}

\subsection{Hyperbolic entrance times}\label{ht}
Throughout this subsection, we assume that $f$ is a $C^2$ diffeomorphism with the dominated splitting 
$
TM=E\oplus_{\succ} F.
$ 

\medskip
Given $\sigma<0$ and $x\in M$, denote by $n\in \H(x,\sigma)$ if
$$
\frac{1}{k}\sum_{n-k+1}^{n}\log\|Df^{-1}|_{E(f^i(x))}\|\le \sigma,\quad \forall\, 1\le k \le n.
$$
Sometimes, we call $n$ a $\sigma$-\emph{hyperbolic time} whenever $n\in \H(x,\sigma)$. 
%
%
%
Given $U\subset M$, define the set of $\sigma$-\emph{hyperbolic entrance times} of $(x,U)$ as follows:
$$
\mathcal{H}(x,\sigma,U)=\left\{k\in \H(x,\sigma):~ f^k(x)\in U\right\}.
$$

\medskip
Given $\JJ\subset \NN$, for every $n\in \NN$ we define $\JJ|_{n}=\JJ\cap\{1,\cdots,n\}$. Let us recall the following lemma, which is a direct consequence of Pliss Lemma \cite[Lemma 3.1]{ABV00}.
\begin{lemma}\label{kp}
Given $n\in \NN$ and $\alpha<0$, there exists $\theta:=\theta(\alpha,f)\in (0,1)$ such that  
$$
\frac{1}{n}\sum_{i=1}^n\log \|Df^{-1}|_{E(f^i(x))}\|<\alpha,
$$
then 
$$
\frac{1}{n}\#\{\H(x,\alpha/2)|_n\}\ge \theta.
$$
\end{lemma}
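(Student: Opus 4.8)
The plan is to obtain this as a direct consequence of the Pliss Lemma \cite[Lemma 3.1]{ABV00}, after a sign change and the choice of one uniform constant. First I would record that, since a dominated splitting is continuous and $M$ is compact, the function $x\mapsto \log\|Df^{-1}|_{E(x)}\|$ is bounded, so there is a constant $C=C(f)>0$ with $\big|\log\|Df^{-1}|_{E(x)}\|\big|\le C$ for every $x\in M$. If $C\le -\alpha$, then every summand of $\frac1n\sum_{i=1}^n\log\|Df^{-1}|_{E(f^i(x))}\|$ is $\ge -C\ge\alpha$, so the hypothesis of the lemma can never be satisfied and there is nothing to prove (take $\theta=1/2$). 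Hence I may assume $C>-\alpha$ from now on, and this is the only place the degenerate case intervenes.

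Now fix $n\in\NN$, $\alpha<0$ and a point $x$ with $\frac1n\sum_{i=1}^n\log\|Df^{-1}|_{E(f^i(x))}\|<\alpha$, and set $b_i:=-\log\|Df^{-1}|_{E(f^i(x))}\|$ for $1\le i\le n$. Then $b_i\le C$ for all $i$ and $\sum_{i=1}^n b_i>-\alpha\,n$. I would apply the Pliss Lemma to $b_1,\dots,b_n$ with upper bound $A=C$ and thresholds $c_2=-\alpha$ and $c_1=-\alpha/2$; the required inequalities $0<c_1<c_2<A$ hold precisely because $\alpha<0$ and $C>-\alpha$. This produces $\ell\ge\theta\,n$ and indices $1\le n_1<\cdots<n_\ell\le n$ with
$$
\theta=\frac{c_2-c_1}{A-c_1}=\frac{-\alpha/2}{C+\alpha/2}\in(0,1)
$$
(the bound $\theta<1$ is again just $C>-\alpha$; note $\theta$ depends only on $\alpha$ and $f$), such that $\sum_{i=m+1}^{n_j}b_i\ge(-\alpha/2)(n_j-m)$ for every $j\in\{1,\dots,\ell\}$ and every $0\le m<n_j$.

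Finally I would translate the Pliss conclusion into the language of hyperbolic times. Writing $k=n_j-m\in\{1,\dots,n_j\}$, the inequality $\sum_{i=n_j-k+1}^{n_j}b_i\ge(-\alpha/2)k$ rearranges to $\frac1k\sum_{i=n_j-k+1}^{n_j}\log\|Df^{-1}|_{E(f^i(x))}\|\le\alpha/2$, and since this holds for all $1\le k\le n_j$ it says exactly that $n_j\in\H(x,\alpha/2)$. Thus $\{n_1,\dots,n_\ell\}\subset \H(x,\alpha/2)|_n$, so $\frac1n\#\big(\H(x,\alpha/2)|_n\big)\ge \frac{\ell}{n}\ge\theta$, which is the assertion. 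I do not expect any real obstacle: the entire content is the Pliss Lemma, and the only points needing a bit of care are the sign change $\log\|Df^{-1}|_{E}\|\mapsto b_i$ that puts the estimate into the form in which Pliss applies, and the trivial degenerate case $C\le-\alpha$ handled at the outset.
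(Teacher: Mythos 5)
Your proof is correct and follows exactly the route the paper intends: the paper states this lemma without proof, merely labeling it ``a direct consequence of Pliss Lemma [ABV00, Lemma 3.1],'' and your argument fills in precisely that reduction. The sign change $b_i=-\log\|Df^{-1}|_{E(f^i(x))}\|$, the choice $A=C$, $c_2=-\alpha$, $c_1=-\alpha/2$ (legitimate once the vacuous case $C\le-\alpha$ is set aside), the resulting constant $\theta=\dfrac{-\alpha/2}{C+\alpha/2}$ depending only on $\alpha$ and $f$, and the observation that the Pliss times are exactly the $(\alpha/2)$-hyperbolic times in the sense of the definition of $\mathcal{H}(x,\sigma)$, are all what one is meant to supply here.
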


\medskip
The following result provides a condition on getting infinitely many hyperbolic entrance times.

\begin{proposition}\label{pro2}
Given $\alpha<0$, there exist $\sigma<0$, $0<\beta<1$ such that for every $x\in M$, if there exist $\mu\in \mathscr{M}(x)$ and an open subset $U$ satisfying 
$$
\int \log \|Df^{-1}|_{E}\|{\rm d}\mu<\alpha, \quad \mu(U)>\beta,
$$
then 
$$
\limsup_{n\to +\infty}\frac{1}{n}\#\{\mathcal{H}(x,\sigma,U)|_n\}>0.
$$
\end{proposition}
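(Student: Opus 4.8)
The plan is to set $\sigma=\alpha/2$, let $\theta=\theta(\alpha,f)\in(0,1)$ be the Pliss constant provided by Lemma \ref{kp}, and fix $\beta$ to be any number strictly between $1-\theta/2$ and $1$; this calibration of $\beta$ against $\theta$ is the only delicate choice. Given $x\in M$ together with $\mu\in\mathscr{M}(x)$ and an open set $U$ as in the hypothesis, I would first select a sequence $n_k\to+\infty$ along which $\frac1{n_k}\sum_{i=0}^{n_k-1}\delta_{f^i(x)}\to\mu$ in the weak$^*$-topology.

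Along this subsequence I would exhibit two families of ``good'' indices, each of positive density. First, since $E$ is a continuous subbundle (a consequence of the domination $E\oplus_{\succ}F$), the function $y\mapsto\log\|Df^{-1}|_{E(y)}\|$ is continuous, so the assumption $\int\log\|Df^{-1}|_{E}\|\,{\rm d}\mu<\alpha$ and weak$^*$-convergence give $\frac1{n_k}\sum_{i=1}^{n_k}\log\|Df^{-1}|_{E(f^i(x))}\|<\alpha$ for all large $k$ --- the bounded boundary term $\log\|Df^{-1}|_{E(x)}\|-\log\|Df^{-1}|_{E(f^{n_k}(x))}\|$ being absorbed --- and then Lemma \ref{kp} yields $\#\{\H(x,\sigma)|_{n_k}\}\ge\theta n_k$. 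Second, since $U$ is open, the portmanteau theorem gives $\liminf_{k\to+\infty}\frac1{n_k}\#\{0\le j<n_k:f^j(x)\in U\}\ge\mu(U)>\beta>1-\theta/2$, so, absorbing the single-index shift, $\#\{1\le j\le n_k:f^j(x)\in U\}\ge(1-\tfrac34\theta)n_k$ for all large $k$.

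The final step is a pigeonhole count: writing $A_k=\H(x,\sigma)|_{n_k}$ and $B_k=\{1\le j\le n_k:f^j(x)\in U\}$, one has $A_k\cap B_k\subset\mathcal{H}(x,\sigma,U)|_{n_k}$ and $\#(A_k\cap B_k)\ge\#A_k+\#B_k-n_k\ge\theta n_k+(1-\tfrac34\theta)n_k-n_k=\tfrac14\theta n_k$ for all large $k$. Hence $\limsup_{n\to+\infty}\frac1n\#\{\mathcal{H}(x,\sigma,U)|_n\}\ge\tfrac14\theta>0$, and since $\sigma=\alpha/2$ and $\beta$ were chosen depending only on $\alpha$ and $f$, the proposition follows.

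I do not anticipate a genuine obstacle here. The whole point is the inequality $\beta>1-\theta$, which forces the two positive-density sets of times (the $\sigma$-hyperbolic times coming from Lemma \ref{kp} and the times of entrance into $U$ coming from the portmanteau bound) to overlap along $\{n_k\}$ in a set of positive density --- two subsets of $\{1,\dots,n_k\}$ whose cardinalities sum to more than $n_k$ must intersect. Everything else is routine: the continuity of $y\mapsto\log\|Df^{-1}|_{E(y)}\|$, the portmanteau characterization of weak$^*$-convergence on open sets, Lemma \ref{kp} itself, and the harmless passage between the index ranges $\{0,\dots,n_k-1\}$ and $\{1,\dots,n_k\}$.
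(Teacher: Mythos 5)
Your proposal is correct and follows essentially the same route as the paper: apply Lemma \ref{kp} along a subsequence of empirical measures converging weak$^*$ to $\mu$ to obtain a positive-density set of $\alpha/2$-hyperbolic times, use the portmanteau (lower semicontinuity on open sets) bound to obtain a positive-density set of visits to $U$, and then a pigeonhole count shows the overlap has positive density. The only cosmetic difference is the slightly more conservative calibration $\beta>1-\theta/2$ in place of the paper's $\beta\in(1-\theta,1)$; both work and the pigeonhole inequality is identical.
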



\begin{proof}
Let $x$ be a point of $M$ for which there exists $\mu\in \cM(x)$ such that
\begin{equation}\label{ppg}
\int \log \|Df^{-1}|_{E}\|{\rm d}\mu<\alpha.
\end{equation}
By definition of $\cM(x)$, one can find subsequence $\{n_k\}_{k\in \NN}$ satisfying 
\begin{equation}\label{nk}
\frac{1}{n_k}\sum_{i=1}^{n_k}\delta_{f^i(x)}\xrightarrow{weak*}\mu\quad {\rm when}~k\to +\infty.
\end{equation}
This together with (\ref{ppg}) gives
$$
\lim_{k\to +\infty}\frac{1}{n_k}\sum_{i=1}^{n_k}\log \|Df^{-1}|_{E(f^i(x))}\|=\int \log \|Df^{-1}|_E\| {\rm d}\mu<\alpha.
$$
By applying Lemma \ref{kp} to integers $n_k$, $k\in \NN$, there exists $\theta\in (0,1)$ so that
\begin{equation}\label{rt}
\liminf_{k\to +\infty}\frac{1}{n_k}\#\{\H(x,\alpha/2)|_{n_k}\}>\theta.
\end{equation}
Let $\beta\in (1-\theta, 1)$ and $U$ be any open subset satisfying $\mu(U)>\beta$. It follows from (\ref{nk}) that
$$
\liminf_{k\to +\infty}\frac{1}{n_k}\sum_{i=1}^{n_k}\chi_{U}(f^i(x))\ge \mu(U)>\beta.
$$
Combining this with (\ref{rt}), one concludes by the choice of $\beta$ that 
$$
\liminf_{k\to +\infty}\frac{1}{n_k}\#\{\mathcal{H}(x,U, \alpha/2)|_{n_k}\}>\theta+\beta-1>0.
$$
This clearly implies the desired result.
\end{proof}

As an application of Proposition \ref{pro2}, we have the next result.

\begin{corollary}\label{pro1}
Under the assumptions of Theorem \ref{ext}, there exist $\sigma<0$, $0<\beta<1$ such that for every $x\in \K$, if $\mu\in \mathscr{M}(x)$ belongs to $\mathcal{S}(f)$, and $U$ is an open subset satisfying $\mu(U)>\beta$, then 
$$
\limsup_{n\to +\infty}\frac{1}{n}\#\{\mathcal{H}(x,\sigma,U)|_n\}>0.
$$
\end{corollary}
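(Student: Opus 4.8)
The strategy is to reduce the statement directly to Proposition~\ref{pro2}; the only point requiring care is the extraction of a single constant $\alpha<0$ that uniformly dominates $\int\log\|Df^{-1}|_E\|\,{\rm d}\mu$ over all $\mu\in\cS(f)$, and this is precisely where the finiteness hypothesis $\#\cS_{erg}(f)<+\infty$ and the exponent bound (\ref{ss}) of Theorem~\ref{ext} are used.

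First I would set
$$
\alpha_0=\max_{\nu\in\cS_{erg}(f)}\int\log\|Df^{-1}|_E\|\,{\rm d}\nu ,
$$
which is a maximum over a nonempty finite set of strictly negative numbers by (\ref{ss}), hence $\alpha_0<0$. I then claim that $\int\log\|Df^{-1}|_E\|\,{\rm d}\mu\le\alpha_0$ for every $\mu\in\cS(f)$. To see this, observe that a hyperbolic SRB measure $\mu$ of index $\dim E$ is a Gibbs $E$-state whose Lyapunov exponents along $F$ are negative: by domination the Oseledets splitting refines $E\oplus_{\succ}F$, so the $\dim E$ positive exponents are exactly those along $E$ and the Pesin unstable leaves are tangent to $E$. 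Writing the ergodic decomposition $\mu=\int\nu\,{\rm d}\tau(\nu)$, Proposition~\ref{cue} gives that $\tau$-a.e.\ ergodic component $\nu$ is again a Gibbs $E$-state; since $\nu$ inherits the index $\dim E$ and the negative sign of the $F$-exponents, we get $\nu\in\cS_{erg}(f)$, whence $\int\log\|Df^{-1}|_E\|\,{\rm d}\nu\le\alpha_0$. Integrating against $\tau$ proves the claim. Now put $\alpha=\alpha_0/2$, so that $\alpha_0<\alpha<0$ and therefore $\int\log\|Df^{-1}|_E\|\,{\rm d}\mu\le\alpha_0<\alpha$ for all $\mu\in\cS(f)$.

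With this $\alpha<0$ fixed, I would apply Proposition~\ref{pro2} to obtain $\sigma<0$ and $\beta\in(0,1)$ with the property stated there. Given $x\in\K$, a measure $\mu\in\mathscr{M}(x)\cap\cS(f)$, and an open set $U$ with $\mu(U)>\beta$, the previous paragraph yields $\int\log\|Df^{-1}|_E\|\,{\rm d}\mu<\alpha$, so the triple $(x,\mu,U)$ satisfies the hypothesis of Proposition~\ref{pro2}; its conclusion is exactly
$$
\limsup_{n\to+\infty}\frac{1}{n}\#\{\mathcal{H}(x,\sigma,U)|_n\}>0 ,
$$
which is the assertion of the corollary.

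The only mildly technical step is the identification of the ergodic components of an element of $\cS(f)$ as elements of $\cS_{erg}(f)$; this is routine once one invokes Proposition~\ref{cue} together with the standard fact that a dominated splitting is respected by the Oseledets decomposition (so that ``SRB of index $\dim E$'' and ``Gibbs $E$-state with negative exponents along $F$'' coincide, and these properties pass to ergodic components). I do not anticipate any genuine obstacle here, and everything else in the argument is a verbatim application of Proposition~\ref{pro2}.
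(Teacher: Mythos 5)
Your proof is correct and follows essentially the same route as the paper's: extract a uniform $\alpha<0$ bounding $\int\log\|Df^{-1}|_E\|\,{\rm d}\nu$ over the finite set $\cS_{erg}(f)$, push this to all of $\cS(f)$ via the ergodic decomposition, and apply Proposition~\ref{pro2}. The one place you are more careful than the paper is the step identifying ergodic components of $\mu\in\cS(f)$ as elements of $\cS_{erg}(f)$: the paper cites Proposition~\ref{uc}(\ref{11}), which is stated for Gibbs $u$-states under a partially hyperbolic splitting, whereas Theorem~\ref{ext} only assumes a dominated splitting $E\oplus_{\succ}F$; your appeal to Proposition~\ref{cue} together with the observation that index and hyperbolicity pass to ergodic components is the reference that actually applies in this setting, so your write-up is, if anything, tighter than the original.
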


\begin{proof}
Let $\cS_{erg}(f)=\{\nu_i: 1\le i \le \ell\}$ for some $\ell\in \NN$, recalling $\#\cS_{erg}(f)<+\infty$ in Theorem \ref{ext}. Moreover, by assumption (\ref{ss}) one can take a uniform constant $\alpha<0$ such that 
\begin{equation}\label{rr}
\int \log \|Df^{-1}|_{E}\|{\rm d}\nu_i<\alpha, \quad \forall \, 1\le i \le \ell.
\end{equation}
For any $x\in \K$, consider $\mu\in \mathscr{M}(x)$ that belongs to $\cS(f)$. By property (\ref{11}) of Proposition \ref{uc}, the ergodic components of $\mu$ are contained in $\cS_{erg}(f)$. Then, by applying ergodic decomposition theorem, there exist $c_i\in [0,1]$, $1\le i\le \ell$ such that 
$$
\mu=\sum_{i=1}^{\ell}c_i\nu_i, \quad \sum_{i=1}^{\ell}c_i=1.
$$
Take (\ref{rr}) into account, we get
$$
\int \log \|Df^{-1}|_{E}\|{\rm d}\mu=\sum_{i=1}^{\ell}c_i\int \log \|Df^{-1}|_{E}\|{\rm d}\nu_i<\alpha.
$$
One then concludes by Proposition \ref{pro2}. 
\end{proof}

\subsection{Proof of Theorem \ref{ext}}\label{ttc}
Given $a>0$, for every $x\in M$, define the $E$-direction \emph{cone of width} $a$ at $x$ as follows:
$$
\mathscr{C}^E_{a}(x)=\left\{v=v^E\oplus v^F\in E(x)\oplus F(x): \|v_F\|\le a \|v_E\|\right\}.
$$
We say a $C^1$ embedded sub-manifold $D$ is \emph{tangent to} $\mathscr{C}^E_{a}$ if it has dimension ${\rm dim}E$ and $T_xD\subset \mathscr{C}^E_{a}(x)$ for every $x\in D$.

\medskip
Let us recall the following result, which is established in \cite[Lemma 5.4]{AP08} by applying the backward contracting property on hyperbolic times.

\begin{proposition}\label{gtg}
Given $\sigma<0$, $a>0$ there exists $\xi>0$ such that for any $C^2$ disk $D$ transverse to $F$, if $H$ and $U$ are subsets of $M$ such that
\begin{itemize}
\item ${\rm Leb}_D(H)>0$;
\smallskip
\item $\mathcal{H}(x,\sigma,U)$ is infinite for every $x\in H$.
\end{itemize}
Then, for any $\zeta\in (0,1)$, there exists $n\in \mathcal{H}(x,\sigma,U)$ with $x\in H\cap D$ such that $f^n(D)$ is tangent to $\mathscr{C}^{E}_a$ and contains the ball $B_{\xi}(f^n(x))$ of radius $\xi$ centered at $f^n(x)$, which satisfies
$$
\frac{{\rm Leb}_{f^n(D)}\left(f^n(H)\cap B_{\xi}(f^n(x))\right)}{{\rm Leb}_{f^n(D)}\left(B_{\xi}(f^n(x))\right)}>\zeta.
$$
\end{proposition}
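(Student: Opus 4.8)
The plan is to exploit the backward-contraction property of $\sigma$-hyperbolic times: if $n\in\H(x,\sigma)$, then for every $1\le k\le n$ the backward iterate $f^{-k}$ contracts $E$-directions along the orbit segment by a factor bounded by $\mathrm{e}^{k\sigma}$, and by domination of the splitting $E\oplus_{\succ}F$ this forces $f^{-k}$ restricted to a disk transverse to $F$ (hence one that is forward-iterated into the $E$-cone) to contract distances. First I would fix $\sigma<0$ and $a>0$, and produce the constant $\xi>0$ purely from $\sigma$, $a$, the domination constants, and a uniform bound on the $C^2$ geometry: there is $\xi_0>0$ (a uniform "hyperbolic-time radius") such that whenever $n\in\H(x,\sigma)$, the connected component of $f^{-n}(B_{\xi_0}(f^n(x)))$ inside $f^{-n}(f^n(D))$ that contains $x$ is a graph over $E$ of small slope with bounded curvature, and $f^{-j}$ (for $0\le j\le n$) maps $B_{\xi_0}(f^n(x))$ back with uniform exponential contraction $\le C\mathrm{e}^{j\sigma/2}$; this is exactly the mechanism of \cite[Lemma 5.4]{AP08}. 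In particular $f^n(D)$ is tangent to $\mathscr{C}^E_a$ near $f^n(x)$ and contains $B_\xi(f^n(x))$ for a slightly smaller $\xi<\xi_0$.

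Next I would fix a disk $D$ transverse to $F$, and sets $H,U$ with $\mathrm{Leb}_D(H)>0$ and $\H(x,\sigma,U)$ infinite for every $x\in H$, together with $\zeta\in(0,1)$. By a Lebesgue density argument, pick a density point $x_0$ of $H$ in $D$, so that on arbitrarily small balls $B$ around $x_0$ in $D$ one has $\mathrm{Leb}_D(H\cap B)/\mathrm{Leb}_D(B)$ as close to $1$ as desired. Now take $x\in H$ close to $x_0$ (in fact $x=x_0$ works if $x_0\in H$, which we may arrange up to shrinking $H$ by a null set) and choose $n\in\H(x,\sigma,U)$ large enough that the preimage $f^{-n}(B_\xi(f^n(x)))\cap (\text{component through }x)$ is contained in a ball $B\subset D$ on which the density of $H$ exceeds a prescribed threshold. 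The key distortion estimate — again from the hyperbolic-time contraction applied to $Df^{-n}$ along $E$ — gives bounded distortion of the Jacobian of $f^n$ restricted to this preimage disk, with a constant independent of $n$. Therefore the ratio of Lebesgue measures is comparable, up to this bounded distortion constant $K$, to the corresponding ratio on $D$:
$$
\frac{\mathrm{Leb}_{f^n(D)}(f^n(H)\cap B_\xi(f^n(x)))}{\mathrm{Leb}_{f^n(D)}(B_\xi(f^n(x)))}\ \ge\ \frac{1}{K}\cdot\frac{\mathrm{Leb}_D(H\cap B)}{\mathrm{Leb}_D(B)}.
$$
Choosing the density threshold larger than $1-\frac{1-\zeta}{K}$ (possible since we may take $n$ as large as we wish and the preimage balls shrink to $x_0$) yields the desired lower bound $>\zeta$.

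The main obstacle I expect is making the geometry uniform: one must show that the radius $\xi$, the cone-tangency, the curvature control, and especially the distortion constant $K$ can all be chosen depending only on $\sigma,a,f$ and not on the disk $D$, the point $x$, or the hyperbolic time $n$. This is where the $C^2$ hypothesis on $f$ (and on $D$) and the domination of $E\oplus_{\succ}F$ are essential: domination guarantees that forward iterates of a disk transverse to $F$ fall into a thin cone around $E$ at a uniform rate, so that after one hyperbolic time the disk is genuinely tangent to $\mathscr{C}^E_a$, and the $C^2$ bound plus the exponential backward contraction on $E$ at hyperbolic times gives the bounded-distortion estimate via the standard telescoping/summation of $\log$-Jacobian differences along the orbit. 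Since this is precisely the content invoked from \cite[Lemma 5.4]{AP08}, I would cite that result for the uniform constants and only spell out the density-point selection and the final ratio comparison in detail.
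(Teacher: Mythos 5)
The paper does not actually prove Proposition~\ref{gtg}; it is quoted directly from \cite[Lemma 5.4]{AP08}, so there is no in-text proof to compare against. Your sketch correctly identifies that source and reproduces its outline (backward contraction on $E$ at $\sigma$-hyperbolic times $\Rightarrow$ uniform radius $\xi$, tangency to $\mathscr{C}^E_a$, and bounded distortion; then a Lebesgue density point plus distortion to transfer density forward). That is the right route.

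There are, however, two concrete problems in the central density-transfer step. First, the displayed inequality
$$
\frac{\mathrm{Leb}_{f^n(D)}\bigl(f^n(H)\cap B_\xi(f^n(x))\bigr)}{\mathrm{Leb}_{f^n(D)}\bigl(B_\xi(f^n(x))\bigr)}\ \ge\ \frac{1}{K}\cdot\frac{\mathrm{Leb}_D(H\cap B)}{\mathrm{Leb}_D(B)}
$$
cannot give the conclusion: the right-hand side is at most $1/K<1$, so it can never exceed a $\zeta$ chosen close to $1$. What bounded distortion actually yields, applied to the complement $V\setminus H$ where $V$ denotes the $f^n$-preimage of $B_\xi(f^n(x))$ in the component of $D$ through $x$, is
$$
\frac{\mathrm{Leb}_{f^n(D)}\bigl(f^n(H)\cap B_\xi(f^n(x))\bigr)}{\mathrm{Leb}_{f^n(D)}\bigl(B_\xi(f^n(x))\bigr)}\ \ge\ 1-K\cdot\frac{\mathrm{Leb}_D(V\setminus H)}{\mathrm{Leb}_D(V)},
$$
which does go to $1$ as the density of $H$ in $V$ goes to $1$. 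Your later threshold $d>1-\frac{1-\zeta}{K}$ is consistent with this complementary form, so the displayed line is a slip, but as written it is the step that fails.

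Second, even in the corrected form the right-hand side involves the density of $H$ in $V$, not in the ball $B$. To pass from ``$x_0$ is a density point of $H$ (w.r.t.\ balls in $D$)'' to ``the density of $H$ in $V_n$ tends to $1$ along hyperbolic times $n\to\infty$'', you need the pulled-back sets $V_n$ to be quasi-round: they must contain a ball of radius comparable to $\mathrm{diam}(V_n)$, uniformly in $n$, so that $\mathrm{Leb}_D(V_n)\ge c\,\mathrm{Leb}_D(B_n)$ for the enclosing balls $B_n$. This inner-ball comparability is a consequence of the same backward-contraction/bounded-distortion estimates you invoke, but it is not automatic and must be stated; without it the inequality $\mathrm{Leb}_D(V_n\setminus H)\le\mathrm{Leb}_D(B_n\setminus H)$ does not control $\mathrm{Leb}_D(V_n\setminus H)/\mathrm{Leb}_D(V_n)$. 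Adding these two clarifications makes the sketch match the argument of \cite[Lemma 5.4]{AP08} that the paper cites.
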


\medskip

We now fix a $C^2$ diffeomorphism $f$ as in Theorem \ref{ext}.
In order to prove Theorem \ref{ext}, we will apply Corollary \ref{pro1} and Proposition \ref{gtg} to some special open subset $U$, relating to Pesin blocks. 

\smallskip
Let us start by introducing Pesin blocks in terms of measures of $\cS_{erg}(f)$.
As $\#\cS_{erg}(f)<+\infty$, we write $\cS_{erg}(f)=\{\nu_i: 1\le i \le \ell\}$ from now on. For each $1\le i \le \ell$, denote by $\chi_i$ and $\psi_i$ the smallest Lyapunov exponents of $\nu_i$ along $E$ and the largest Lyapunov exponents of $\nu_i$ along $F$, respectively.
Let us take 
$$
\chi_u=\min\{\chi_i: 1\le i \le \ell\} \quad \textrm{and}\quad \chi_s=\max\{\psi_i: 1\le i \le \ell\}.
$$
Fix $\epsilon\ll \min\{\chi_u, -\chi_s\}$ and consider Pesin blocks $\Lambda_k=\Lambda_k(\chi_u,\chi_s,\epsilon)$, $k\in \NN$ defined as in Definition \ref{dpb}.


%
We construct some special open neighborhood of the Pesin block as follows.

\begin{lemma}\label{ball}
Given $\beta>0$ and $k\in \NN$ such that $\nu(\Lambda_k)>\beta$ for every $\nu\in \cS_{erg}(f)$. Then for every $r>0$, there exists a family $\F(r)$ consisting of $r$-balls such that
\begin{itemize}
\item for each $B\in \F(r)$, there exists $\nu\in\cS_{erg}(f)$ such that $\nu(B\cap \Lambda_k)>0$;
\smallskip
\item one has that
$$
\mu\left(\bigcup_{B\in \F(r)}B\right)>\beta, \quad \forall \,\mu \in \cS(f).
$$
\end{itemize} 
\end{lemma}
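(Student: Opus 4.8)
The plan is to exploit the fact that the Pesin block $\Lambda_k$ is a compact set (for each fixed level $k$, the defining inequalities are closed conditions, so $\Lambda_k$ is closed, hence compact in $M$), and that the family $\cS_{erg}(f)$ is finite, say $\cS_{erg}(f)=\{\nu_1,\dots,\nu_\ell\}$. First I would fix $r>0$. For each $1\le i\le\ell$, since $\nu_i(\Lambda_k)>\beta>0$ and $\Lambda_k$ is compact, I can cover $\Lambda_k$ by finitely many open balls of radius $r$ (centered at points of $\Lambda_k$) and then discard those balls $B$ for which $\nu_i(B\cap\Lambda_k)=0$; the remaining finite collection $\F_i(r)$ still satisfies $\nu_i\big(\bigcup_{B\in\F_i(r)}B\big)\ge\nu_i(\Lambda_k)>\beta$, because the discarded balls contribute zero $\nu_i$-mass to $\Lambda_k$ while the union of the kept ones contains $\nu_i$-a.e.\ point of $\Lambda_k$. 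Each kept ball automatically satisfies the first bullet (with this particular $\nu=\nu_i$). Then I would set $\F(r)=\bigcup_{i=1}^\ell\F_i(r)$, a finite family of $r$-balls; the first bullet holds for every $B\in\F(r)$ since it already held in whichever $\F_i(r)$ contributed $B$.

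Next I would verify the second bullet, i.e.\ that $\mu\big(\bigcup_{B\in\F(r)}B\big)>\beta$ for every $\mu\in\cS(f)$. The key input here is property (\ref{11}) of Proposition \ref{uc}: a hyperbolic SRB measure of index $\dim E$ is in particular a Gibbs $u$-state (when $E$ is uniformly expanding) — or, more relevantly in the present general dominated-splitting context, one uses the ergodic decomposition together with the assumed finiteness $\cS_{erg}(f)=\{\nu_i\}$. Given $\mu\in\cS(f)$, by the ergodic decomposition theorem its ergodic components are hyperbolic ergodic SRB measures of index $\dim E$, hence lie in $\cS_{erg}(f)$; so $\mu=\sum_{i=1}^\ell c_i\nu_i$ with $c_i\ge0$, $\sum c_i=1$. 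Since $\bigcup_{i}\F_i(r)=\F(r)$, for each $i$ the open set $V:=\bigcup_{B\in\F(r)}B$ contains $\bigcup_{B\in\F_i(r)}B$, whence $\nu_i(V)>\beta$ for every $i$. Therefore $\mu(V)=\sum_i c_i\,\nu_i(V)>\beta\sum_i c_i=\beta$, which is exactly the second bullet.

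The only genuine subtlety — and the step I expect to need the most care — is the passage "ergodic components of $\mu\in\cS(f)$ lie in $\cS_{erg}(f)$." This requires knowing that an ergodic component of a hyperbolic SRB measure of index $\dim E$ is again a hyperbolic ergodic SRB measure of the same index: hyperbolicity and the index are preserved because the Lyapunov exponents of a.e.\ ergodic component agree with those of $\mu$ at a.e.\ point, and the SRB (absolutely continuous conditional measures along Pesin unstable manifolds tangent to $E$) property of ergodic components follows from Proposition \ref{cue} applied with the dominated splitting $TM=E\oplus_\succ F$. I would invoke Proposition \ref{cue} (or \cite[Lemma 2.4]{cv}, \cite[Proposition 4.7]{CMY22}) for this, so no new argument is needed; everything else is a routine compactness-plus-finiteness packaging, and I would not belabor those details.
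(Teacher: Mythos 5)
Your proof is correct and follows essentially the same route as the paper's: finite covering by $r$-balls for each $\nu_i$, then ergodic decomposition $\mu=\sum c_i\nu_i$ together with $\nu_i\bigl(\bigcup_{B\in\F(r)}B\bigr)>\beta$ to get the second bullet. The only cosmetic difference is that the paper covers $\operatorname{supp}(\nu_i|_{\Lambda_k})$ directly (so every ball has positive $\nu_i$-mass automatically) rather than covering $\Lambda_k$ and discarding $\nu_i$-null balls; your extra remark that ergodic components of $\mu\in\cS(f)$ lie in $\cS_{erg}(f)$, justified via Proposition \ref{cue}, is a point the paper leaves implicit but is indeed needed.
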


\begin{proof}
Recall that $\cS_{erg}(f)=\{\nu_i: 1\le i \le \ell\}$. For each $1\le i \le \ell$, since ${\rm supp}(\nu_i|\Lambda_k)$ is compact, there exists an open covering $\F^i(r)$ of ${\rm supp}(\nu_i|\Lambda_k)$ with some cardinality $n_i\in \NN$ defined as follows 
$$
\F^i(r)=\left\{B(x_{i_j},r): x_{i_j}\in {\rm supp}(\nu_i|\Lambda_k),~ 1\le j\le n_i\right\}.
$$
This implies that we have $\nu_i(B\cap \Lambda_k)>0$ for every $B\in \F^i(r)$. Moreover, as $\nu_i(\Lambda_k)>\beta$ by assumption, it follows that
\begin{equation}\label{kk}
\nu_i\left(\bigcup_{B\in \F^i(r)}B\right)\ge \nu_i(\Lambda_k)>\beta.
\end{equation}
Take
$$
\F(r)=\bigcup_{i=1}^{\ell}\F^i(r).
$$
The first item can be deduced  by construction immediately. To show the second item, let us fix any $\mu\in \cS(f)$. By ergodic decomposition theorem, there exists $c_i\in [0,1]$ for every $1\le i \le \ell$ such that 
$$
\mu=\sum_{i=1}^{\ell}c_i\nu_i, \quad \sum_{i=1}^{\ell}c_i=1.
$$
Thus, in view of (\ref{kk}) one has
\begin{eqnarray*}
\mu\left(\bigcup_{B\in \F(r)}B\right) &=& \mu\left(\bigcup_{i=1}^\ell\bigcup_{B\in \F^i(r)}B\right)\\
&\ge & \sum_{i=1}^{\ell} c_i\nu_i\left(\bigcup_{B\in \F^i(r)}B\right)\\
&>& \beta,
\end{eqnarray*}
which shows the second item, thus the proof is complete.

\end{proof}

\medskip
Now we can give the proof of Theorem \ref{ext}.

\begin{proof}[Proof of Theorem \ref{ext}]

Let $\cS_{erg}(f)=\{\nu_i: 1\le i \le \ell\}$.
Let $\sigma<0$, $0<\beta<1$ be the constants given by Corollary \ref{pro1}. Since the sequence of Pesin blocks $\{\Lambda_{k}\}_{k\in \NN}$ is increasing, one can take $k$ large enough so that 
$$
\nu_i(\Lambda_{k})>\beta, \quad \forall\, 1\le i\le \ell.
$$
Take $\delta_k$ as the size of local Pesin (un)stable manifolds of points in $\Lambda_k$. Fix the cone-width $a$ sufficiently small, let $\xi$ be the constant given by Proposition \ref{gtg}.
We fix $\delta=\min\{\xi,\delta_{k}\}$ now.

\smallskip
By domination, one can choose $r\ll \delta$ such that, for every $B(x,r)$, for every $y,z\in B(x,r)$, if $D_y$ and $D_z$ are $C^1$ disks of radius $\delta$ that tangent to $\mathscr{C}_a^E$ and $\mathscr{C}_a^F$, respectively, then $D_y$ intersects $D_z$ transversely.
For every $x\in \Lambda_k$, denote by $\F^s_{\delta}(x)$ and $\F^u_{\delta}(x)$ the local Pesin stable manifold and local Pesin unstable manifold at $x$ of size $\delta$, respectively. As we have noted in the proof of Theorem \ref{sec}, $\F^s_{\delta}(x)$ is tangent to $E$ and $\F^u_{\delta}(x)$ is tangent to $F$. In particular, they are tangent to $\mathscr{C}_a^E$ and $\mathscr{C}_a^F$, respectively.

\smallskip
Applying Lemma \ref{ball} to $\beta$ and $r$ as above, we get the family $\F(r)$ consisting of finitely many $r$-balls with the following properties:
\begin{itemize}
\item for each $B\in \F(r)$, there exists $\nu_i\in\cS_{erg}(f)$ such that $\nu_i(B\cap \Lambda_k)>0$;
\item 
\begin{equation}\label{kkr}
\mu\left(\bigcup_{B\in \F(r)}B\right)>\beta, \quad \forall\, \mu \in \cS(f).
\end{equation}
\end{itemize}

\smallskip
For each $B\in \F(r)$, as in the proof of Theorem \ref{sec},
we can build a measurable partition $\cS^u_B$ surrounding $B$, whose elements are local unstable disks of size $\delta/2$ of points from $\Lambda_k\cap B$.


\smallskip
We have the following claim, whose proof we postpone for a while:
\begin{claim}
There exists $\eta>0$ such that for each $1\le i \le \ell$, for each $B\in \F^i(r)$, if $x\in B$ and 
$\gamma$ is a disk of radius $\xi$ centered at $x$, then 
\begin{equation}\label{ttg}
\frac{{\rm Leb}_{\gamma}(\B(\nu_i,f))}{ {\rm Leb}_{\gamma}(\gamma)}>\eta.
\end{equation}
\end{claim}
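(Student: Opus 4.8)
The plan is to run a quantitative version of the argument from the proof of Theorem~\ref{sec}, using that every Pesin-theoretic estimate in play is governed by the single fixed block level $k$, and that both $\F(r)$ and $\cS_{erg}(f)=\{\nu_1,\dots,\nu_\ell\}$ are finite. By this finiteness it is enough to exhibit, for each $1\le i\le\ell$ and each $B\in\F^i(r)$ (so that $\nu_i(B\cap\Lambda_k)>0$), a constant $\eta_{i,B}>0$ that works uniformly over all $C^1$ disks $\gamma$ of radius $\xi$ tangent to $\mathscr{C}^E_a$ and of bounded curvature (as are the images $f^n(D)$ supplied by Proposition~\ref{gtg}) centered at a point $x\in B$; one then puts $\eta=\min_{i,B}\eta_{i,B}$. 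I would first record two elementary facts about $\nu_i$. Since $\nu_i$ is a hyperbolic ergodic SRB measure of index $\dim E$ and $E$ dominates $F$, all positive exponents of $\nu_i$ lie along $E$ and all exponents along $F$ are negative; thus $\nu_i$ is an ergodic Gibbs $E$-state with negative $F$-exponents (hence physical, by Proposition~\ref{pes}), its local Pesin stable manifolds $\F^s_\delta(z)$ are tangent to $F$, hence to $\mathscr{C}^F_a$, and $\F^s_\delta(z)\subset\B(\nu_i,f)$ whenever $z\in\Lambda_k\cap\B(\nu_i,f)$, because points of a local stable manifold share the forward Birkhoff averages of the base point.

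Next I would produce a good reference unstable disk. Following the construction in the proof of Theorem~\ref{sec}: using the measurable partition $\cS^u_B$ into local unstable disks of size $\delta/2$ through the points of $\Lambda_k\cap B$, the ergodicity of $\nu_i$ (which gives $\nu_i(\Lambda_k\cap B\cap\B(\nu_i,f))=\nu_i(\Lambda_k\cap B)>0$), and the Gibbs $E$-state property of $\nu_i$ (its conditionals along the disks of $\cS^u_B$ are absolutely continuous with respect to Lebesgue measure on those disks), one gets a disk $\gamma_{i,B}\in\cS^u_B$ carrying a set $W_{i,B}\subset\gamma_{i,B}\cap\Lambda_k\cap\B(\nu_i,f)$ with ${\rm Leb}_{\gamma_{i,B}}(W_{i,B})>0$. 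A minor but genuinely routine point, handled by a Lebesgue-density argument inside $\gamma_{i,B}$ together with an auxiliary radius $\ll r$, is to arrange that $W_{i,B}$ sits close enough to $B$ that, for the disks $\gamma$ of the statement, the stable manifolds through the points of $W_{i,B}$ actually cross $\gamma$.

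The core step is the transport via the Pesin stable holonomy. Fix $\gamma$ of radius $\xi$, tangent to $\mathscr{C}^E_a$, centered at $x\in B$. Since $\F^s_\delta(z)$ is tangent to $\mathscr{C}^F_a$ while $\gamma$ is tangent to the complementary cone $\mathscr{C}^E_a$, and since $r\ll\delta\le\xi$, the choice of $r$ forces $\F^s_\delta(z)$ to meet $\gamma$ transversely in a single point, for every $z\in W_{i,B}$; this defines the holonomy map $\Pi\colon W_{i,B}\to\gamma$. By the absolute continuity of the Pesin stable lamination of the block $\Lambda_k$, the map $\Pi$ is absolutely continuous, and the upper bound $J_k$ on its Jacobian depends only on $k$, on the fixed data $\delta$ and $a$, and on the distance between and the $C^1$-geometry of the two transversals --- all uniformly bounded --- but not on $\gamma$ or $x$. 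Since $\F^s_\delta(z)\subset\B(\nu_i,f)$ for $z\in W_{i,B}$, we get $\Pi(W_{i,B})\subset\gamma\cap\B(\nu_i,f)$, whence
$$
{\rm Leb}_\gamma\!\big(\gamma\cap\B(\nu_i,f)\big)\ \ge\ {\rm Leb}_\gamma\!\big(\Pi(W_{i,B})\big)\ \ge\ J_k^{-1}\,{\rm Leb}_{\gamma_{i,B}}(W_{i,B})\ =:\ c_{i,B}\ >\ 0 .
$$
As ${\rm Leb}_\gamma(\gamma)$ is bounded above by a constant $V$ (the volume of a radius-$\xi$ disk of bounded curvature), the claim follows with $\eta_{i,B}=c_{i,B}/V$.

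The principal obstacle is the uniformity in the core step: one must know that neither $J_k$ nor ${\rm Leb}_{\gamma_{i,B}}(W_{i,B})$ degrades as $\gamma$ ranges over all admissible disks centered in $B$. This is exactly what fixing the Pesin block $\Lambda_k$ once and for all secures --- the sizes of the local stable and unstable manifolds, the angles between them, and the H\"older constants of the stable distribution on $\Lambda_k$ are then uniform --- combined with the finiteness of $\F(r)$ and of $\cS_{erg}(f)$, which reduces the whole claim to a minimum over finitely many pairs $(i,B)$. The positioning point flagged in the second step is a routine consequence of choosing the auxiliary radius $\ll r\ll\delta$.
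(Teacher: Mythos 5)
Your argument is correct and follows essentially the same route as the paper: extract a reference unstable disk $\gamma_{i,B}\in\cS^u_B$ carrying a positive-Lebesgue set of points in $\Lambda_k\cap\B(\nu_i,f)$, then use the absolute continuity of the Pesin stable holonomy (with Jacobian controlled by the fixed block level $k$) to push this measure onto $\gamma$, finishing by the finiteness of $\cS_{erg}(f)$ and $\F(r)$. The only substantive difference is that you flag explicitly, and the paper leaves implicit, the positioning step needed to ensure the stable manifolds through points of $\gamma_{i,B}$ actually reach a disk $\gamma$ of radius $\xi$ centered anywhere in $B$ — a point that is routine but does require the cone width $a$ and the ratio $r/\delta$ to be small, as you note.
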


\medskip
Let
$$
H:=\K\setminus \bigcup_{i=1}^{\ell}\B(\nu_i,f),\quad U:=\bigcup_{B\in \F(r)}B.
$$
Going by contradiction, we assume that
$
{\rm Leb}(H)>0.
$
Let us take an open ball $\mathcal{O}$ such that ${\rm Leb}(\mathcal{O}\cap H)>0$, and foliate $\mathcal{O}$ with smooth disks transverse to $F$. By applying the Fubini's  theorem, we may fix a $C^2$ disk $D$ transverse to $F$ such that ${\rm Leb}_D(H)>0$.

\smallskip
By definition of $\K$, for any $x\in H\subset \K$, any $\mu\in \mathscr{M}(x)$ is contained in $\cS(f)$. Therefore, we have
$$
\mu(U)=\mu\left(\bigcup_{B\in \F(r)}B\right)>\beta
$$
from (\ref{kkr}). By applying Corollary \ref{pro1} to above $U$, we have
$$
\limsup_{n\to +\infty}\frac{1}{n}\#\{\mathcal{H}(x,\sigma,U)|_n\}>0, \quad \forall\, x\in H.
$$
By Proposition \ref{gtg}, for fixed $\zeta>1-\eta$ we can choose $x\in D\cap H$ and $n\in \mathcal{H}(x,\sigma,U)$ such that
$f^n(D)$ contains the ball $\gamma:=B_{\xi}(f^n(x))$ of radius $\xi$ centered at $f^n(x)$, which satisfies
\begin{equation}\label{uu}
\frac{{\rm Leb}_{\gamma}(f^n(H))}{{\rm Leb}_{\gamma}(\gamma)}>\zeta.
\end{equation}
Since $n\in \mathcal{H}(x,\sigma,U)$, we know $f^n(x)\in U$ by definition. By construction of $U$, there exists $1\le i \le \ell$ such that
$$
f^n(x)\in B\quad \textrm{for some} ~B\in \F^i(r).
$$
According to the above Claim, we know this $\gamma$ also satisfies the estimate (\ref{ttg}).
Combining estimates (\ref{ttg}), (\ref{uu}) and the choices of $\eta$ and $\zeta$, we obtain 
$$
{\rm Leb}_{\gamma}(\B(\nu_i,f)\cap f^n(H))>0.
$$
Using the invariance of $\B(\nu_i,f)$, we then get
$$
{\rm Leb}_D(\B(\nu_i,f)\cap H)>0.
$$
This gives a contradiction to the definition of $H$.
\end{proof}

All that is left to do is to prove the Claim above.

\begin{proof}[Proof of the Claim]
For each $1\le i \le \ell$, for each $B\in \F^i(r)$, since $\nu_i(B\cap \Lambda_k)>0$ and $\nu_i$ is an ergodic Gibbs $E$-state, there exists $\gamma_B\in \mathcal{R}_B$ such that 
\begin{equation}\label{fde}
{\rm Leb}_{\gamma_B}(\B(\nu_i,f)\cap \Lambda_k)>0.
\end{equation}
Recall the choice of $r$, we know that for any smooth disk $\gamma\ni x$ tangent to $\mathscr{C}_a^{E}$ of radius $\xi$ around $x$, $\gamma$ cuts $\mathcal{F}_{\delta}^s(y)$ transversely for every $y\in \Lambda_k\cap \gamma_B$. 
Consequently, from (\ref{fde}) and the absolute continuity of Pesin stable lamination, one can take a uniform $\eta>0$ such that 
$$
\frac{{\rm Leb}_{\gamma}(\B(\nu_i,f))}{{\rm Leb}_{\gamma}(\gamma)}>\eta.
$$
for every $1\le i \le \ell$ and disk $\gamma$ described as above.
\end{proof}

\section{Proof of Theorem \ref{TheoB}}\label{pb}

Let us remark that in Theorem \ref{TheoB}, we do not involve any information on Lyapunov exponents along the first one-dimensional center $E_1^c$ in terms of empirical measures, this is mainly due to the following observation. A proof can be found in \cite[Lemma 3.4]{MC23}.

\begin{proposition}\label{ded}
Let $f\in \P_k(M)$ with partially hyperbolic splitting 
$$
TM=E^u\oplus_{\succ} E_1^c\oplus_{\succ} \cdots \oplus_{\succ} E_k^c\oplus_{\succ} E^s.
$$
If ${\rm Leb}(M\setminus \cup_{\nu\in \mathcal{G}_0}\B(\nu,f))>0$, then for Lebesgue almost every $x\in M\setminus \cup_{\nu\in \mathcal{G}_0}\B(\nu,f)$, any $\mu\in \mathscr{M}(x)$ admits only positive Lyapunov exponents along $E_1^c$.
\end{proposition}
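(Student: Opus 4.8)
The plan is to argue by contradiction and run the hyperbolic-times / absolute-continuity scheme of Theorem~\ref{ext}, applied to the dominated splitting $E=E^u$, $F=E_1^c\oplus_{\succ}\cdots\oplus_{\succ}E^s$; the key simplification is that $E^u$ is uniformly expanding, and the key modification is that here only an \emph{ergodic component} of an empirical limit $\mu\in\mathscr M(x)$, rather than the whole limit, is known to lie in $\mathcal G_0$. First I would record the structure of $\mathcal G_0$: every $\nu\in\mathcal G_0$ is an ergodic Gibbs $u$-state with $\lambda^c_1(\nu)<0$, and since $\nu$ is hyperbolic (definition of $\P_k(M)$) and the splitting is dominated, this forces $\lambda^c_i(\nu)<0$ for all $1\le i\le k$, so $\nu$ has exactly ${\rm dim}\,E^u$ positive exponents, all carried by $E^u$, and its strong unstable manifolds coincide with its Pesin unstable manifolds; hence $\nu\in\cS_{erg}(f)$ for the splitting $E^u\oplus_{\succ}F$, and by Theorem~\ref{TheoA} the set $\mathcal G_0$ is finite. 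Now suppose the conclusion fails: with $Z:=M\setminus\bigcup_{\nu\in\mathcal G_0}\B(\nu,f)$, the set $\K$ of $x\in Z$ for which some $\mu\in\mathscr M(x)$ does not admit only positive exponents along $E_1^c$ has positive Lebesgue measure.

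Intersecting $\K$ with the full Lebesgue measure set on which every element of $\mathscr M(x)$ is a Gibbs $u$-state (Proposition~\ref{uc}(3)), and using that Gibbs $u$-states are hyperbolic with hyperbolic ergodic components (Proposition~\ref{uc}(1)), such a $\mu$ must carry a positive weight of ergodic components with negative $E_1^c$-exponent, i.e.\ an ergodic component in $\mathcal G_0$. As $\mathcal G_0$ is finite, there are a single $\nu_0\in\mathcal G_0$ and a set $\K_0\subset Z$ with ${\rm Leb}(\K_0)>0$ so that for each $x\in\K_0$ some $\mu_x\in\mathscr M(x)$ has $\nu_0$ as an ergodic component of positive weight $c=c(x)$. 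Next I would localize $\nu_0$ as in the proof of Theorem~\ref{ext}: fix a Pesin block $\Lambda_j=\Lambda_j(\chi_u,\chi_s,\epsilon)$ of $\nu_0$ (Definition~\ref{dpb}), with $\chi_u$ the smallest $E^u$-exponent, $\chi_s=\lambda^c_1(\nu_0)$ and $E^+=E^u$, $E^-=F$ as in the proof of Theorem~\ref{sec}, chosen with $\nu_0(\Lambda_j)>0$; let $\delta_j$ be the size of the local Pesin manifolds on $\Lambda_j$. Fix a thin cone-width $a>0$, a $\sigma<0$ with $\H(x,\sigma)=\NN$ for every $x\in M$ (possible since $E^u$ is uniformly expanding), and let $\xi$ be the constant of Proposition~\ref{gtg} for this $\sigma,a$. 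Cover ${\rm supp}(\nu_0|_{\Lambda_j})$ by finitely many balls $B(x_m,r)$ with $r\ll\min\{\xi,\delta_j\}$ and set $U=\bigcup_mB(x_m,r)$; then, exactly as in the Claim inside the proof of Theorem~\ref{ext} --- using that $\nu_0$ is a Gibbs $u$-state together with the absolute continuity of its Pesin stable lamination --- there is $\eta>0$ such that every disk $\gamma$ of radius $\xi$ tangent to $\mathscr C^{E^u}_a$ with center in $U$ satisfies ${\rm Leb}_{\gamma}(\B(\nu_0,f))/{\rm Leb}_{\gamma}(\gamma)>\eta$.

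Now, since $\H(x,\sigma)=\NN$, for each $x\in\K_0$ one has $\mathcal H(x,\sigma,U)=\{k\in\NN:\ f^k(x)\in U\}$, and because $U$ is open, weak$^*$ convergence of the empirical measures along a subsequence realizing $\mu_x$ gives $\limsup_{n\to+\infty}\frac1n\#\{\mathcal H(x,\sigma,U)|_n\}\ge\mu_x(U)\ge c\,\nu_0(\Lambda_j)>0$; in particular $\mathcal H(x,\sigma,U)$ is infinite for every $x\in\K_0$. By Fubini fix a $C^2$ disk $D$ transverse to $F$ with ${\rm Leb}_D(\K_0)>0$ and set $H=\K_0\cap D$. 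Applying Proposition~\ref{gtg} to $H$, $U$ and some $\zeta\in(1-\eta,1)$ produces $x\in H\cap D$ and $n\in\mathcal H(x,\sigma,U)$ for which $f^n(D)$ is tangent to $\mathscr C^{E^u}_a$, contains $\gamma:=B_{\xi}(f^n(x))$, and satisfies ${\rm Leb}_{f^n(D)}(f^n(H)\cap\gamma)/{\rm Leb}_{f^n(D)}(\gamma)>\zeta$. Since $f^n(x)\in U$, the previous step gives ${\rm Leb}_{\gamma}(\B(\nu_0,f))/{\rm Leb}_{\gamma}(\gamma)>\eta$, and $\zeta+\eta>1$ forces ${\rm Leb}_{f^n(D)}(f^n(H)\cap\B(\nu_0,f))>0$. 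By invariance of $\B(\nu_0,f)$, pulling back by $f^{-n}$ yields ${\rm Leb}_D(H\cap\B(\nu_0,f))>0$, contradicting $H\subset Z\subset M\setminus\B(\nu_0,f)$; this contradiction finishes the argument.

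The main obstacle is the step ``$\nu_0$ is an ergodic component of some $\mu\in\mathscr M(x)$'' $\Rightarrow$ ``$x$ has infinitely many $\sigma$-hyperbolic entrance times into a fixed neighborhood of ${\rm supp}(\nu_0)$''. This is exactly where the uniform expansion of $E^u$ is exploited: it makes every integer a $\sigma$-hyperbolic time, so one is left only with a positive-lower-frequency estimate for visits to $U$, which weak$^*$ convergence supplies even though $\nu_0$ may carry only a small fraction of the mass of $\mu_x$; without uniform expansion one would instead have to combine the Pliss-type frequency control of Lemma~\ref{kp} for $E^u$-hyperbolic times with the visit frequency, which is available but less direct. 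The remaining delicate point --- uniformity of $\eta$ over the finitely many balls forming $U$ and over all admissible disks $\gamma$ --- is the content of the Claim in the proof of Theorem~\ref{ext} and can simply be quoted.
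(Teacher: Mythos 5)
The paper itself does not give an in-text proof of Proposition~\ref{ded}; it cites \cite[Lemma 3.4]{MC23}, so a line-by-line comparison is not possible. Your reconstruction is, however, correct, and it is the natural argument given the machinery the paper develops (the hyperbolic-entrance-times scheme of \S\ref{5-covering}, Proposition~\ref{gtg}, and the Claim-type density estimate via absolute continuity of the Pesin stable lamination). The three points where you depart from a verbatim application of Theorem~\ref{ext} are all handled correctly: (i) the finiteness of $\mathcal{G}_0$ is available from Theorem~\ref{TheoA}, which is proved before Proposition~\ref{ded} is used, so there is no circularity; (ii) replacing the hypothesis ``every $\mu\in\mathscr M(x)$ lies in $\cS(f)$'' by the weaker ``some $\mu\in\mathscr M(x)$ has $\nu_0\in\mathcal G_0$ as an ergodic component of positive weight'' still yields $\mu_x(U)>0$ for $U$ a small open neighborhood of $\Lambda_j\cap\mathrm{supp}\,\nu_0$, which is all Proposition~\ref{gtg} needs; and (iii) the Pliss-type frequency step (Lemma~\ref{kp}, Corollary~\ref{pro1}) becomes unnecessary because $E^u$ is uniformly expanding, so every $n\in\NN$ is already a $\sigma$-hyperbolic time for $\sigma=\log\|Df^{-1}|_{E^u}\|_{\infty}<0$, and only the visit frequency to $U$ (supplied by weak$^*$ convergence along the realizing subsequence and openness of $U$) is needed. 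This simplification is exactly what the uniform expansion of $E^u$ buys, and it is the point your last paragraph correctly identifies as the crux.
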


\medskip
Now we give the proof of Theorem \ref{TheoB}.

\begin{proof}[Proof of Theorem \ref{TheoB}]
By Proposition \ref{ded}, one knows that for Lebesgue almost every point $x$ of $M\setminus \cup_{\nu\in \G_0} \B(\nu,f)$, any $\mu\in \mathscr{M}(x)$ admits only positive Lyapunov exponents along $E_1^c$. In other words, we have 
$$
\lambda_1^c(z)>0, \quad \mu \textrm{-a.e.}~ z\in M.
$$ 

\smallskip
Let us give a more accurate classification on full Lebesgue measure subset of $M\setminus \cup_{\nu\in \G_0}  \B(\nu,f)$. Indeed, for every $1\le i \le k$, consider $K_i$ as the set of points $x$ for which there exists $\mu\in \mathscr{M}(x)$ such that 
\begin{equation}\label{tt}
\lambda_i^c(z)>0, \quad \lambda_{i+1}^c(z)<0, \quad \mu \textrm{-a.e.}~ z\in M.
\end{equation}
By assumption, Lebesgue almost every point of $M\setminus \cup_{\nu\in \G_0}  \B(\nu,f)$
is contained in $K_i$ for some $1\le i \le k$. 

\smallskip
By Theorem \ref{TheoA}, $\cup_{0\le i \le k}\G_i$ is the set of all ergodic physical measures. As a result, to prove the result, it suffices to show the following: for every $1\le i \le k$, Lebesgue almost every point of $K_i$ is contained in $\cup_{\nu\in \G_i} \B(\nu,f)$. 

\smallskip
For each fixed $1\le i \le k$, we assume that ${\rm Leb}(\K_i)>0$ without loss of generality. Rewrite $E_i=E^u\oplus_{\succ}\cdots\oplus_{\succ} E_i^c$ and $F_i=E_{i+1}^c\oplus_{\succ}\cdots\oplus_{\succ} E^s$.
By Proposition \ref{cug} and the definition of $K_i$, there exists a full Lebesgue subset $\K_i$ of $K_i$ such that for every $x\in \K_i$, there exists $\mu\in \mathscr{M}(x)$ satisfying
$$
{\rm h}_{\mu}(f)\ge\int \log |{\rm det}Df|_{E_i}|{\rm d}\mu
$$
and (\ref{tt}).
This together with the Ruelle's inequality ensures that $\mu$ is a hyperbolic SRB measure of index ${\rm dim}E_i$.
We know $\#\G_i<+\infty$ by Theorem \ref{TheoA}, recall that $\G_i$ is the set of ergodic hyperbolic SRB measures of index ${\rm dim}E_i$. 
By applying Theorem \ref{TheoC} to the dominated splitting $TM=E_i\oplus_{\succ} F_i$, we conclude that Lebesgue almost every point of $K_i$ is contained in $\cup_{\nu\in \G_i} \B(\nu,f)$.
This completes the proof of Theorem \ref{TheoB}.
\end{proof}

\end{document}